\newcommand{\white}[1]{{\textcolor{white}{#1}}}
\newcommand{\cV}{{\mathcal V}}
\newcommand{\tf}{{\mathtt f}}
\newcommand{\umu}{\underline{\mu}}
\newcommand{\ent}[6]{\begingroup 
\setlength\arraycolsep{-2pt}\begin{matrix}{\tB_{#1}^{[#2]}} &\white{{|}^|}_{#3,#4}^{#5,#6}\end{matrix}\endgroup}
\newcommand{\sent}[6]{ {{\tB_{#1}^{[#2]}}_{#3,#4}^{#5,#6}}}
\newcommand{\Res}[2]{\mathtt{R}\begingroup 
\setlength\arraycolsep{0pt}{\scriptsize \begin{matrix} #2 \\[0.7mm] #1 \end{matrix} }\endgroup}
\newcommand{\e}{\epsilon}
\newcommand{\im}{\mathrm{i}\,}
\newcommand{\und}[1]{{\underline{#1}}}
\newcommand{\molt}[2]{\left(#1\,,\,#2\right)}
\newcommand\restr[2]{{
  \left.\kern-\nulldelimiterspace 
  #1 
  \vphantom{\big|} 
  \right|_{#2} 
  }}
\theoremstyle{plain}
\newtheorem{lem}{Lemma}
\newtheorem{teo}[lem]{Theorem}
\newtheorem{prop}[lem]{Proposition}
\newtheorem{ass}[lem]{Assumption}
\theoremstyle{definition}
\newtheorem{rmk}[lem]{Remark}
\renewcommand{\bar}{\overline}
\newcommand{\sgn}{\mathrm{sgn}}
\newcommand{\vet}[2]{\begin{bmatrix}#1 \\ #2 \end{bmatrix}}
\newcommand{\uno}{\mathrm{Id}}
\newcommand{\bR}{\mathbb{R}}
\newcommand{\bT}{\mathbb{T}}
\newcommand{\bZ}{\mathbb{Z}}
\newcommand{\bN}{\mathbb{N}}
\newcommand{\bC}{\mathbb{C}}
\newcommand{\cL}{\mathcal{L}}
\newcommand{\cO}{\mathcal{O}}
\newcommand{\cJ}{\mathcal{J}}
\newcommand{\kB}{\mathfrak{B}}
\newcommand{\cB}{{\cal B}}
\newcommand{\tB}{\mathtt{B}}
\newcommand{\tJ}{\mathtt{J}}
\newcommand{\tL}{\mathtt{L}}
\newcommand{\de}{\mathrm{d}}
\newcommand{\pa}{\partial}
\newcommand{\tR}{\mathtt{R}}
\newcommand{\bro}{\bar\rho}
\numberwithin{equation}{section}
\title{\bf First isola of  modulational instability of Stokes waves in deep water}
\begin{document}

\author{Massimiliano Berti, Alberto Maspero\thanks{
International School for Advanced Studies (SISSA), Via Bonomea 265, 34136, Trieste, Italy.\\ 
 \textit{Emails: } \texttt{berti@sissa.it},  \texttt{alberto.maspero@sissa.it}
 }, Paolo Ventura\thanks{ University of Milan, Via C. Saldini, 50, 20133 Milan, Italy. \textit{Email: } \texttt{paolo.ventura@unimi.it}}}

\maketitle

\begin{abstract}
We prove high-frequency modulational instability of small-amplitude Stokes waves in deep water under  longitudinal perturbations, providing the first  isola of unstable eigenvalues  branching off  from $\im \tfrac34$. 
Unlike the finite depth case this is a degenerate problem
and the real part of the unstable eigenvalues has a much smaller size than in finite depth. 
By a symplectic version of Kato theory 
we reduce to search the eigenvalues of a $2\times 2$ 
Hamiltonian and  reversible matrix
which has eigenvalues with non-zero real part if and only if a certain analytic function is not identically zero.
In deep water we prove that the Taylor coefficients up to order three of this function vanish, but not the  fourth-order one. 
\end{abstract}

\maketitle

\begin{flushright}
{\em To Thomas, who taught us beautiful mathematics\\ and to   be always  grateful for the good things happening in  life}
\end{flushright}

\tableofcontents

\section{Introduction and main result} 

Stokes waves are periodic  
solutions 
of the 
pure gravity water waves equations, traveling at constant speed. Since their discovery 
by Stokes \cite{stokes} in 1847 and their rigorous mathematical existence 
proof in 
\cite{Nek, LC, Struik}, 
they have been the 
object of intense studies, regarded as a key first step toward better understanding the complicated flow evolution of the water waves equations. 
Pioneering  experimental and formal works  by Benjamin and Feir \cite{Benjamin, BF},  Lighthill \cite{Ligh},  Zakharov \cite{Zak1}, Whitham \cite{Wh} highlighted, 
more than fifty years ago, that 
Stokes waves are 
 unstable under long wave perturbations, a phenomenon that 
nowadays  goes  by the name of Benjamin-Feir or {\em modulational instability}.
Rigorous mathematical proofs  were later given by  Bridges-Mielke \cite{BrM} in finite depth and by Nguyen-Strauss \cite{NS} in deep water. 
These works prove  existence of unstable spectrum near the origin of the complex plane of 
the linearized water waves operator  $\cL_\e$ (see \eqref{cLepsilon}) at a  
 Stokes wave of small amplitude $ \epsilon $, when it is 
 regarded as an unbounded operator  on $L^2(\bR)$. We also mention the recent nonlinear modulational instability result 
 in Chen-Su \cite{ChenSu}.

In a recent series of works we gave the  complete description of the portion of the spectrum $\sigma_{L^2(\bR)}(\cL_\e) $ near the origin of the complex plane both in  deep water \cite{BMV1} and finite depth \cite{BMV2, BMV_ed},  proving the existence of  an unstable spectral branch of eigenvalues outside the imaginary axis forming a  figure ``8'', as first observed  numerically by  Deconinck-Oliveras \cite{DO} (see also the formal computations in \cite{CD}).

\smallskip
The behaviour of the spectrum  $\sigma_{L^2(\bR)}(\cL_\e) $ away from zero is still not sufficiently  investigated, 
although highly relevant for the 
stability/instability of the Stokes waves.
Numerical results by Deconinck-Oliveras \cite{DO}  shed light on the presence of ``\emph{isolas} of instability'' --isolated elliptic
shaped islands of unstable eigenvalues  away from the origin-- which suggests the appearance of 
unstable spectrum along the whole imaginary axis, with  
real part decreasing exponentially  at infinity.  
Because of the Hamiltonian character of the operator $\cL_\e$, unstable eigenvalues can occur only as perturbations of multiple purely imaginary eigenvalues of $\cL_0$.  
The nonzero multiple  eigenvalues of $\cL_0$ 
are enumerated in literature by an integer $p \geq 2$ --it turns out they are all double-- and we shall follow this convention. 
Formal expansions  describing  the first two   isolas ($p=2,3$) 
of unstable eigenvalues were obtained 
by 
Creedon-Deconinck-Tritchenko \cite{CDT} in both finite and infinite depth, see also  
\cite{DT,CDT2, CDT3}  for other asymptotic models.
A rigorous analytical result about the first isola of instability $p=2$  was given recently  by 
Hur and Yang \cite{HY} in finite depth 
for pure gravity waves 
and in \cite{HY2} for gravity-capillary waves.
Unfortunately the approach in \cite{HY} does not apply in  infinite depth, as it relies on spatial dynamics which fails in deep water (similarly to \cite{BrM}). 
We finally mention the very  recent paper \cite{CNS} which,  relying on the spectral approach in 
\cite{BMV1},   proved  the 
instability of the Stokes waves under 
transversal perturbations. 

\smallskip

The goal of this paper is to rigorously  prove  the  existence of the first isola of  instability ($p=2$) 
in the deep-water case, under longitudinal perturbations. As we explain below this problem is considerably more difficult than in finite depth, because it is {\it degenerate}. 
In Theorem \ref{thm:main} we show that the spectrum $\sigma_{L^2(\bR)}(\cL_\e) $ has a branch of eigenvalues   near  $\im \frac34$ with non zero real part, 
 forming a {\it very narrow} elliptic shaped curve in the complex plane, see Figures \ref{comeBernard} and \ref{figure0}. Formula \eqref{isola.intro} provides the ellipse which asymptotically approximate the $p=2$-isola of unstable eigenvalues. 
 
  \begin{figure}[h] 
 \centering
 \includegraphics[width=9cm]{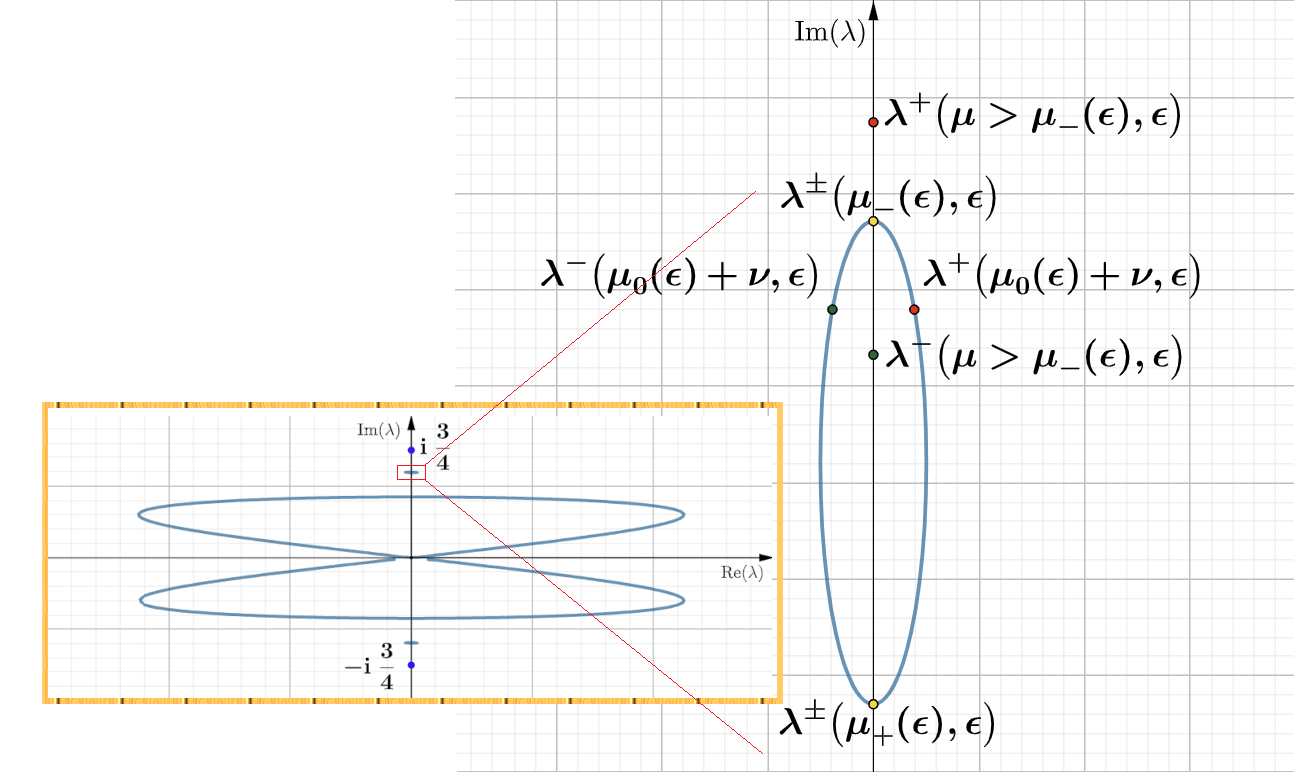}
\caption{\label{comeBernard} Part of the spectrum of $\cL_\e $ 
in the deep-water case. The figure  ``8'' was proved in \cite{BMV1}. The two small symmetric isolas  (detailed in Figure \ref{figure0})  are the subject of this article.  }
\end{figure}

Let us now state precisely our main result. Since the operator  $\cL_{\e}$  has   $2\pi$-periodic coefficients, arising by the linearization of the water waves equations at a $2\pi$-periodic Stokes wave, 
its spectrum is conveniently described via Bloch-Floquet theory, which ensures  that 
$\sigma_{L^2(\bR)}(\cL_\e) = \bigcup_{\mu \in [0,\frac12)} \sigma_{L^2(\bT)}(\cL_{\mu,\e})$ where $\cL_{\mu,\e}:= e^{- \im \mu x} \cL_\e e^{\im \mu x}$. 
Our main result describes the  spectrum  near $\im \tfrac34$ of the operator $ \cL_{\mu,\e}$,   
for any $(\mu, \e)$ sufficiently close to $(\tfrac14,0)$, value  at which
$\cL_{\frac14,0}$ has a double eigenvalue (the  
closest one to $0$).
\\[1mm]
$ \bullet $ {\bf Notation:}  Along the paper we denote by  $r (y^{m_1}\e^{n_1},\dots,y^{m_p}\e^{n_p}) $ (the variable $y $ is called $ \mu,\delta,\nu $ depending on the context) 
a real analytic function  satisfying,  for some $ C > 0 $ and for any small value of $(y, \e)$,
 $ |r (y^{m_1}\e^{n_1},\dots,y^{m_p}\e^{n_p}) |  \leq C \sum_{j = 1}^p |y|^{m_j}|\e|^{n_j}\, 
 . $
\begin{teo}\label{thm:main}
 There exist $\e_1,\delta_0>0$ and real  analytic functions $\mu_0,\mu_\pm \colon [0, \e_1) \to B_{\delta_0}(\tfrac14)$ with 
$ \mu_+(\e)\leq \mu_0(\e) \leq \mu_-(\e)$ of the form 
\begin{equation}\label{mupm}
\mu_0 (\e) = \frac14 - \frac{57}{64} \e^2 + r(\e^3)\, , \quad
\mu_\pm (\e) = \mu_0(\e)  \mp \frac{111 \sqrt{3}}{1024}\e^4 +r(\e^5) \, ,
\end{equation}
 such that for  any $(\mu, \e) \in B_{\delta_0}(\tfrac14)\times [0,\e_1)$ the 
following holds true. Defining $ \nu_\pm (\epsilon) := \mu_\pm (\epsilon) - \mu_0 (\epsilon) $, 
the operator $ \cL_{\mu,\e}$ at  $(\mu,\e) = (\mu_0(\e) + \nu, \e)$ possesses two eigenvalues  of the form 
\begin{align}\label{final.eig}
& \lambda^\pm \big(\mu_0(\e) + \nu, \e \big) = \\ \notag &\quad\begin{cases} \im \dfrac{3}4 +\im {\dfrac{4}3}\nu - \im{\dfrac{55}{32}}  \e^2+ \im r(\e^{{3}}, \nu\e^2, \nu^2)   \pm \dfrac12 \sqrt{ D(\mu_0(\e)+\nu,\e)}  &\mbox{if }  \nu \in \big( \nu_+(\e) , \nu_-(\e)\big) \, , \\[3mm]
\im \dfrac{3}4 +\im {\dfrac{4}3\nu} - \im{\dfrac{55}{32} } \e^2+ \im r(\e^{{3}}, \nu\e^2, \nu^2)
 \pm \dfrac{\im\!}2 \sqrt{ |D(\mu_0(\e)+\nu,\e)| } &\mbox{if } \nu \notin \big( \nu_+(\e) , \nu_-(\e)\big)\, ,
\end{cases} 
\end{align}
where
\begin{equation}\label{degenerateexpD0}
D(\mu_0(\e)+\nu,\e) = \frac{4107}{65536} \e^8 - \frac{16}{9} \nu^2 - \frac{37}{128} \nu\e^6 +r(\e^{9},\nu\e^7,\nu^2\e^2,\nu^3)\, .
\end{equation} 
For any  fixed $\e \in (0,\e_1)$,  
 the pair of unstable eigenvalues $\lambda^\pm(\mu,\e)$ depicts, as $\mu$ varies in $\big(\mu_+(\e),\mu_-(\e)\big)$, an ellipse-like curve in the complex plane, i.e. a closed analytic  curve that
  intersects orthogonally the imaginary axis 
  and encircles a convex region.
 \end{teo}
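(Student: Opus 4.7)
The plan is to follow the strategy announced in the abstract: combine the Bloch--Floquet decomposition with a symplectic and reversible version of Kato's perturbation theory to reduce the spectral problem to a $2 \times 2$ matrix, and then perform a careful Taylor expansion of its characteristic discriminant. First I would analyze the unperturbed operator $\cL_{\frac14,0}$, which is a constant-coefficient Fourier multiplier whose spectrum near $\im \tfrac{3}{4}$ consists of a double eigenvalue produced by the intersection of two dispersion branches (this is the $p=2$ crossing; the associated Fourier modes are explicit). Because $\cL_\e$ is Hamiltonian and reversible, the corresponding two-dimensional generalized eigenspace inherits a symplectic and reversible structure that is preserved by the Kato projectors.

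Next, for $(\mu,\e)$ in a small neighborhood of $(\tfrac14,0)$, I would apply the symplectic Kato construction to obtain analytic families of $2$-dimensional invariant subspaces $V(\mu,\e)$ together with a symplectic, reversible basis of $V(\mu,\e)$. In this basis $\cL_{\mu,\e}$ restricts to a matrix $\tL(\mu,\e)$ that is itself Hamiltonian and reversible. These two symmetries force the trace of $\tL$ to be purely imaginary, so the eigenvalues of $\tL$ take the form $\tfrac{\im T(\mu,\e)}{2} \pm \tfrac12\sqrt{D(\mu,\e)}$ with $T$, $D$ real-analytic, and instability occurs precisely where the discriminant $D(\mu,\e)$ is positive. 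The function $\mu_0(\e)$ in \eqref{mupm} is determined by a natural centering condition coming from the normal-form reduction (geometrically, $\mu = \mu_0(\e)$ is where the two imaginary parts coincide to leading order), fixing the location of the prospective isola.

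The heart of the proof is the explicit Taylor expansion of $D(\mu,\e)$. In finite depth $D$ already has a nonzero $\e^2$ coefficient, but in deep water a sequence of algebraic cancellations forces the low-order coefficients of the relevant off-diagonal entries of $\tL$ to vanish, producing the \emph{degeneracy} announced in the abstract. I would therefore expand the Stokes wave, the Bloch--Floquet conjugation and the Kato similarity up to order $\e^4$, substitute into the entries of $\tL$, and after lengthy cancellations recover \eqref{degenerateexpD0}, whose leading coefficient $\tfrac{4107}{65536}$ is nonzero. The equation $D(\mu_0(\e)+\nu,\e)=0$ then reduces, modulo higher orders, to a quadratic in $\nu$ with coefficients at orders $\nu^2$, $\nu \e^6$ and $\e^8$; its two real roots $\nu_\pm(\e)$, extracted via the analytic implicit function theorem, give the endpoints $\mu_\pm(\e)$ in \eqref{mupm}, the factor $\sqrt{3}$ arising from $\sqrt{4107} = 37\sqrt{3}$. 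For $\nu \in (\nu_+(\e),\nu_-(\e))$ the discriminant is positive and \eqref{final.eig} exhibits a nonzero real part; outside this interval the eigenvalues remain on the imaginary axis. The ellipse-like shape of the unstable branch then follows because $\sqrt{D(\mu_0(\e)+\nu,\e)}$ vanishes with square-root behaviour at the endpoints, producing vertical tangents (hence orthogonal intersection with the imaginary axis) and, combined with the concavity of $\nu \mapsto \sqrt{(\nu-\nu_+)(\nu_--\nu)}$, convexity of the enclosed region.

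The main obstacle is algebraic rather than conceptual: one must push every ingredient (the Stokes expansion, the Bloch--Floquet conjugation, the Kato projectors, the symplectic basis change) coherently to order $\e^4$ while tracking the cancellations that kill the $\e^2$ and $\e^3$ coefficients of $D$, and then verify that the first genuinely nonzero coefficient $\tfrac{4107}{65536}$ really survives at order $\e^8$. Organizing the computation so that only the terms relevant modulo the remainder in \eqref{degenerateexpD0} are retained, and identifying the structural mechanism that makes the problem degenerate in the deep-water limit, is what distinguishes this analysis from the finite-depth case handled in \cite{HY,BrM}.
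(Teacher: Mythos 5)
Your proposal follows essentially the same strategy as the paper: Bloch--Floquet decomposition, symplectic Kato reduction to a $2\times2$ Hamiltonian and reversible matrix $\tL(\mu,\e)$, centering at the zero set $\mu_0(\e)$ of its trace, and a fourth-order Taylor expansion of the Stokes wave, the conjugation, and the projectors that exposes and then overcomes the deep-water degeneracy; the paper's ``entanglement coefficients'' and the abstract instability criterion (Theorems~\ref{instacrit}, \ref{degenerateisola}) are bookkeeping devices for precisely the computation you outline, so the two routes are the same in substance.

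One step does need more care. Your convexity argument invokes the concavity of $\nu\mapsto\sqrt{(\nu-\nu_\wedge)(\nu_\vee-\nu)}$, but the unstable branch in the complex plane is the curve $\nu\mapsto\bigl(\pm\tfrac12\sqrt{D(\mu_0(\e)+\nu,\e)},\ \tfrac12 S(\mu_0(\e)+\nu,\e)\bigr)$: its vertical coordinate is $y=\tfrac12 S$, not $\nu$. Concavity of the real part as a function of $\nu$ does not automatically survive the reparametrization $\nu\mapsto y$; one has to know that $\pa_\nu S$ is bounded away from zero (this is where the extra hypothesis $\alpha_1\neq\gamma_1$ enters, satisfied for $p=2$ since $\gamma_1-\alpha_1=\tfrac83$), invert to get $\nu(y,\e)$, and then control $\pa_y\nu$, $\pa_{yy}\nu$ and $\pa_\mu^2 D$ uniformly so that the explicit expression for $\pa_{yy}X(y,\e)$ in \eqref{concavesso}--\eqref{convecavo} is negative. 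Without this change-of-variable estimate the orthogonal intersection with the imaginary axis is fine (square-root vanishing of $D$ at the endpoints) but the convexity of the enclosed region, hence the ``ellipse-like'' conclusion, is not yet established.
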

Let us make some comment on the result.\smallskip

1. According to \eqref{final.eig}, for any Floquet parameter $\mu \in \big(\mu_+(\e) , \mu_-(\e)\big)$ the eigenvalues $\lambda^{\pm}(\mu,\e)$ have opposite real part.
As $\mu \to \mu_{\pm}(\e)$, the eigenvalues collide on the imaginary axis. For $\mu<\mu_{+}(\e)$ or $\mu> \mu_-(\e)$ the two eigenvalues are purely imaginary. Being $\cL_\e$ a real operator,  also the mirror spectral branch of eigenvalues
$\bar{\lambda^\pm(-\mu,\e)} $ is present. 

\smallskip

2. By fixing $\e >0$ and letting  $\mu$ vary in the interval $\big(\mu_+(\e), \mu_-(\e)\big)$, the two eigenvalues  $\lambda^{\pm}(\mu,\e)$ depict an ellipse-like curve. By
discarding the remainders $r(\nu^\alpha)$, $\alpha=2,3 $, 
in the expressions \eqref{final.eig} and
\eqref{degenerateexpD0}, the  real and imaginary part of the approximate eigenvalues parameterize the ellipse
\begin{equation}\label{isola.intro}
x^2  + \frac14 \big(y-y_0(\e)\big)^2\big(1+r(\e^2)\big) =  \frac{4107}{262144} \e^8(1+r(\e)) \, ,\quad y_0(\e) = \frac34-\frac{55}{32}\e^2+r(\e^4)\,.
\end{equation}

\begin{figure}[h]
 \centering
 \includegraphics[width=7cm]{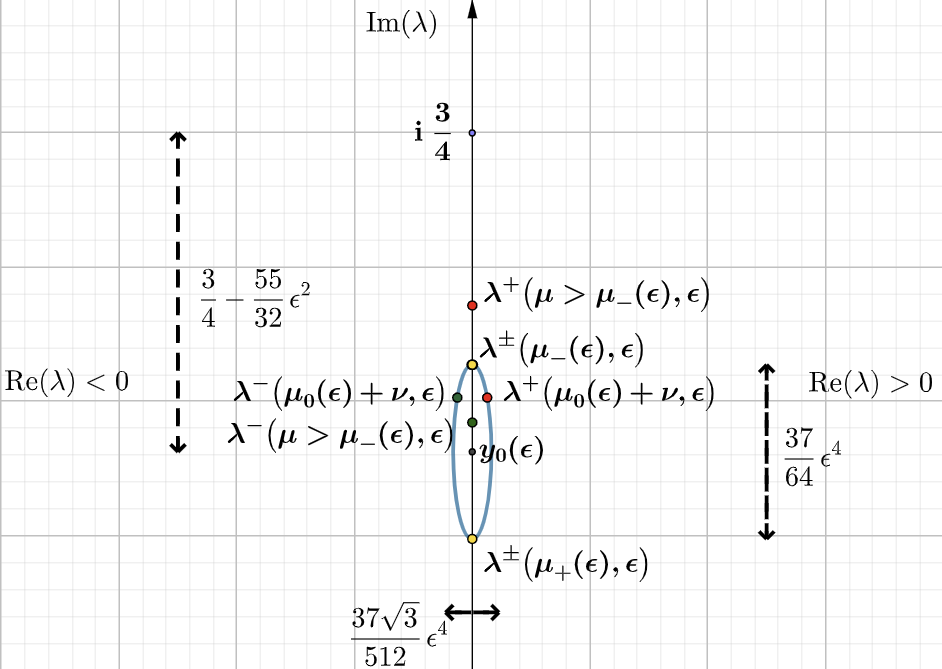}
\caption{First isola depicted by the two symmetric non-purely imaginary eigenvalues of $\cL_{\mu,\e} $ close to $\im\tfrac34 $. The isola does not encircle $\im \frac34 $. 
\label{figure0}}
\end{figure}

3. Theorem \ref{thm:main} actually provides the expansion of the two eigenvalues of $\cL_{\mu,\e}$ close to $\im \frac34$ for all values of $(\mu,\e)$ in  $(\tfrac14 - \delta_0, \tfrac14 + \delta_0) \times [0,\e_1)$. The 
analytic curves $\mu_\pm(\e) $ in 
\eqref{mupm} divide such rectangle in two separated regions: 
one  where  $\cL_{\mu,\e}$ has two eigenvalues 
with non-zero real part, and another one  where the eigenvalues are purely imaginary. 
\begin{figure}[h]\centering
\includegraphics[scale=0.5]{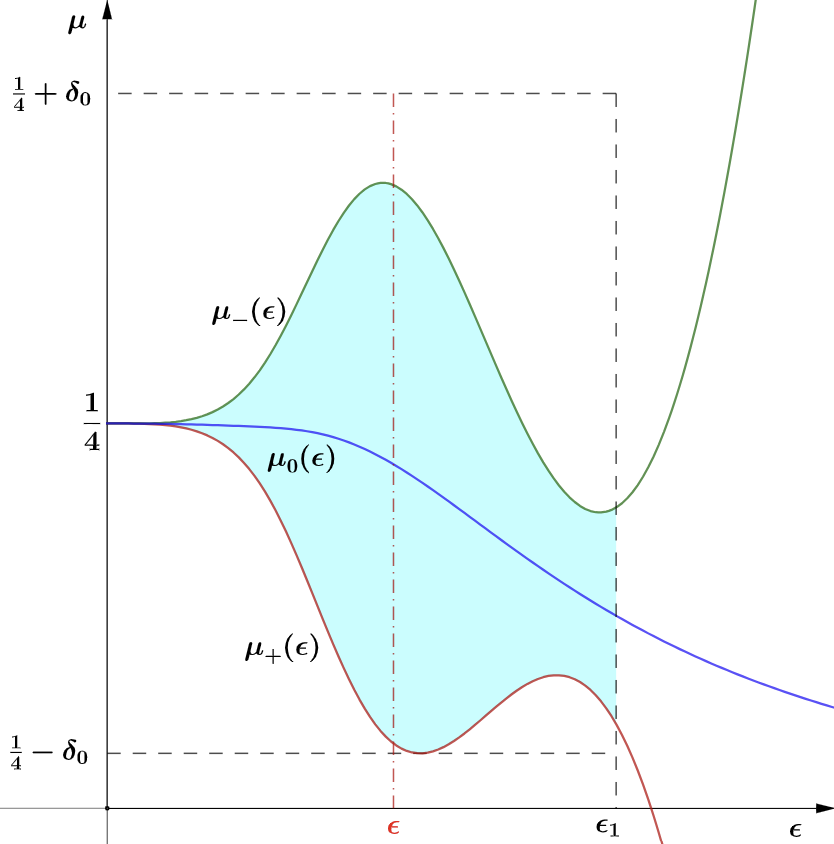}
\caption{\label{actualinstabilityregion} The degenerate instability region. We boxed with black dashed lines the validity box of Theorem \ref{thm:main}. At any  fixed  $\e$, for $(\mu,\e) $ in the colored cusp-shape region, one has the formation of the isola of instability in Figure \ref{figure0}.}
\end{figure}
\smallskip

4. The work \cite{HY} describes the first isola in finite depth. Such a  case is non-degenerate (see
\eqref{cases}) for almost any depth. On the contrary, the 
infinite depth  case is {\it degenerate}, and its analysis requires 
 the fourth-order  expansion of the Stokes waves, 
 as we now explain. 

\smallskip

We briefly describe the proof and its difficulties. 
Exploiting that $\cL_{\mu,\e}$ is a Hamiltonian and reversible operator, 
we use  a symplectic version of Kato's similarity transformation theory to compute a symplectic basis  of the two-dimensional invariant subspace $\cV_{\mu,\e}$ associated with the two eigenvalues of $\cL_{\mu,\e}$ close to $\im \frac34 $.
  The action of $\cL_{\mu,\e}\vert_{\cV_{\mu,\e}}$  is then represented by a $2\times 2$ Hamiltonian and reversible matrix of the form 
$ \tL(\mu,\e)={\scriptsize \begin{pmatrix} -\im \alpha(\mu,\e) &  \beta(\mu,\e) \\ \beta(\mu,\e) & \im \gamma(\mu,\e) \end{pmatrix}}$, where $\alpha(\mu,\e)$, $\beta(\mu,\e)$ and $\gamma(\mu,\e)$ are real analytic functions.

We like to remember that this method, that we initiated in \cite{BMV1, BMV2}, was inspired to us by the approach that  Thomas Kappeler developed in   \cite{Kappeler}, in the context of selfadjoint operators, to study the spectral gaps of the Lax-operator of the KdV equation (this line of research was  further developed in   \cite{KP,toda}).

After our symplectic reduction one needs only to investigate the eigenvalues of the matrix $ \tL(\mu,\e)$.  In Section \ref{sec:mr} we prove 
a necessary and sufficient condition for a $2\times 2$ matrix of this form to have non-purely imaginary eigenvalues.  Theorem \ref{instacrit}
proves that  $\tL(\mu,\e)$ 
 has two unstable eigenvalues if and only if the real analytic function 
$\e \mapsto \beta(\mu_0(\e), \e)$ is not identically zero, where $\mu_0(\e)$ is the analytic function such that
$\alpha(\mu_0(\e),\e) + \gamma(\mu_0(\e),\e) \equiv 0$.
This instability criterion amounts to 
prove that $\beta(\mu_0(\e),\e) $ has a nonzero Taylor coefficient at $ \e = 0 $.
The first Taylor coefficients of  $\beta(\mu_0(\e),\e) $  are computed  in    \eqref{beta(mu,e)}  in terms of the Taylor coefficients of  $\alpha(\mu,\e)$, $\beta(\mu,\e)$ and $\gamma(\mu,\e)$. 
In deep water, it turns out that the coefficient  $\beta_1$ in    \eqref{beta(mu,e)} vanishes, cfr. \eqref{epsilonspento2}; this is the degeneracy  we mentioned above (whereas 
$ \beta_1 $ is not zero for almost any finite depth). 
We are then led to  compute  the coefficient of $\e^4$ in  \eqref{beta(mu,e)}, which 
 depends on  a higher-order Taylor expansion of  
$\alpha(\mu,\e)$, $\beta(\mu,\e)$ and $\gamma(\mu,\e)$  given in Theorem \ref{thm:expansion} and proved throughout Sections \ref{sec:5} and \ref{section:conti}. 
Via a careful analysis to isolate only the relevant terms, we finally prove that the fourth-order  coefficient of \eqref{beta(mu,e)}
is
$\frac{37\sqrt{3}}{512} \neq 0 $. We thus conclude that the   eigenvalues of $ \tL(\mu,\e)$, i.e. the ones of $\cL_{\mu,\e}$ close to $ \im \tfrac34$, are unstable in a certain region of $(\mu,\e)$ as stated in Theorem \ref{thm:main}.

 Theorem \ref{thm:main} is a direct consequence of Theorem \ref{thm:expansion} that we now rigorously present. 

\smallskip 
\noindent {\bf The water waves equations.}
We consider the Euler equations for a 2-dimensional  
incompressible 
and irrotational fluid under the  action of  gravity 
 filling the 
region 
$ { \mathcal D}_\eta := \big\{ (x,y)\in \bT\times \bR\;:\;  y< \eta(t,x) \big\}$, 
$   \bT :=\bR/2\pi\bZ  $,  
with  space periodic boundary conditions. 
The irrotational velocity field is the gradient  
of a harmonic scalar potential $\Phi=\Phi(t,x,y) $  
determined by its trace $ \psi(t,x)=\Phi(t,x,\eta(t,x)) $ at the free surface
$ y = \eta (t, x ) $.
Actually $\Phi$ is the unique solution of 
$    \Delta \Phi = 0 $ in $ {\mathcal D}_\eta $ satisfying 
 the Dirichlet condition $   \Phi(t,x,\eta(t,x)) = \psi(t,x) $ and 
$  \nabla \Phi(t,x,y) \to 0 $ as $ y\to - \infty $.
The time evolution of the fluid is determined by two boundary conditions at the free surface. 
The first is that the fluid particles  remain, along the evolution, on the free surface   
(kinematic boundary condition), 
and the second one is that the pressure of the fluid  
is equal, at the free surface, to the constant atmospheric pressure 
 (dynamic boundary condition). 
Then, as shown in \cite{Zak1,CS}, 
the evolution of the fluid is determined by the 
following equations for the unknowns $ (\eta (t,x), \psi (t,x)) $,  
\begin{equation}\label{WWeq}
 \eta_t  = G(\eta)\psi \, , \quad 
  \psi_t  =  
- g \eta - \dfrac{\psi_x^2}{2} + \dfrac{1}{2(1+\eta_x^2)} \big( G(\eta) \psi + \eta_x \psi_x \big)^2 \, , 
\end{equation}
where $g > 0 $ is the gravity constant and $G(\eta)$ denotes 
the Dirichlet-Neumann operator $
 [G(\eta)\psi](x) := \Phi_y(x,\eta(x)) -  \Phi_x(x,\eta(x)) \eta _x(x)$. 
 With no loss of generality we set the gravity constant $g=1$.
The equations \eqref{WWeq} are the Hamiltonian system
\begin{equation}\label{HSsympl}
\pa_t \vet{\eta}{\psi} = \cJ \vet{\nabla_\eta \mathcal{H}}{\nabla_\psi \mathcal{H}}, \quad \quad \cJ:=\begin{bmatrix} 0 & \uno \\ -\uno & 0 \end{bmatrix},
\end{equation}
 where $ \nabla $ denote the $ L^2$-gradient, and the Hamiltonian
$  \mathcal{H}(\eta,\psi) :=  \frac12 \int_{\mathbb{T}} \left( \psi \,G(\eta)\psi +\eta^2 \right) \de x
$
is the sum of the kinetic and potential energy of the fluid. 
In addition of being Hamiltonian, the water waves 
system \eqref{WWeq} is time reversible with respect to the involution 
\begin{equation}\label{revrho}
\rho\vet{\eta(x)}{\psi(x)} := \vet{\eta(-x)}{-\psi(-x)}, \quad \text{i.e. } \ 
\mathcal{H} \circ \rho = \mathcal{H} \, .
\end{equation}
{\bf Stokes waves, linearization and Bloch-Floquet expansion. }
The equations \eqref{WWeq} admit an analytic family of traveling periodic Stokes waves 
 $ (\eta_\e (t,x), \psi_\e(t,x)) = (\breve \eta_\e (x-c_\e t), \breve \psi_\e (x-c_\e t ))  $
 where   $(\breve \eta_ \e (x), \breve \psi_\e (x)) $ are  $2\pi$-periodic functions of the form 
$$ 
   \breve \eta_\e (x) =  \e \cos(x) + \cO(\e^2) \,,\quad \breve \psi_\e (x)  = \e \sin(x) +\cO(\e^2) \, ,\quad c_\e = 1+\cO(\e^2) \, .
$$
In a reference frame moving
with the speed $ c_\e $, 
the linearized  water waves equations at the Stokes waves 
  turn out to be\footnote{After conjugating with the ``good unknown of Alinhac" and the ``Levi-Civita" transformations, we refer to  \cite{BMV1,BMV2,BMV_ed} for details.}  the linear system 
$ h_t = \cL_\e h   $
where 
$\cL_\e:  H^1(\bT,\bR^2) 
\to L^2(\bT, \bR^2) $ 
is the Hamiltonian and reversible real operator
 \begin{equation}\label{cLepsilon} 
\cL_\e  =  \begingroup 
\setlength\arraycolsep{1pt}
\begin{bmatrix} \pa_x \circ (1+p_\e(x)) &  |D| \\ - (1+a_\e(x)) &   (1+p_\e(x))\pa_x \end{bmatrix} \endgroup = \cJ \begingroup 
\setlength\arraycolsep{1
pt}\begin{bmatrix}   1+a_\e(x) &   -(1+p_\e(x)) \pa_x \\ 
\pa_x \circ (1+p_\e(x)) &  |D|  \end{bmatrix} \endgroup
\end{equation} 
and
$p_\e (x), a_\e (x) $ are real  even analytic functions. 
We shall need their fourth order Taylor expansion which is 
\begin{subequations}\label{apexp}
\begin{align}
p_\e (x) & =  \sum_{n \geq 1} \e^n p_n (x) =  -2\e\cos(x) + \e^2 \Big( \frac32-2\cos(2x) \Big)  \label{pfunction} \\ \notag
&  \ \ + 3 \e^3 \big( \cos(x)-\cos(3x) \big)+ \e^4 \Big( \frac18 +4 \cos (2 x)-\frac{16}{3} \cos (4 x) \Big)+\cO(\e^5) \, ,  \\
a_\e (x) & =  \sum_{n \geq 1} \e^n a_n (x) = -2\e\cos(x)+2\e^2\big(1-\cos(2x)\big) \label{afunction}  \\ \notag
& \ \ +\e^3\big(4\cos(x)-3\cos(3x)\big)+\e^4\Big(-1+4 \cos (2 x)-\frac{16}{3} \cos (4 x)\Big)+\cO(\e^5) \, ,
\end{align}  
\end{subequations}
as shown taking the infinite depth limit  in \cite[(A.59)--(A.60)]{BMV_ed} (it follows
by the fourth-order expansion of the Stokes waves in \cite[(A.1)]{BMV_ed} or 
\cite[Proposition 2.2]{CNS}). 

By Bloch-Floquet theory, by introducing the Floquet exponent $\mu$, the spectrum
$$
\sigma_{L^2(\bR)} (\cL_\e ) = \bigcup_{\mu\in [- \frac12, \frac12)} \sigma_{L^2(\bT)}  (\cL_{\mu, \e})  
\quad \text{where} \quad 
\cL_{\mu,\e}:= e^{- \im \mu x} \, \cL_\e \, e^{\im \mu x}  \, ,  
$$
and, if $\lambda$ is an eigenvalue of $\cL_{\mu,\e}$ with a $ 2 \pi $-periodic 
eigenvector $v(x)$, then  $h (t,x) = e^{\lambda t} e^{\im \mu x} v(x)$ is a solution of 
$h_t = \cL_{\e} h$ whose growth in time is determined by Re$\,\lambda$.
\begin{rmk}\label{Brillouin}
Being
$ \sigma ({\mathcal L}_{-\mu,\e}) = \overline{  \sigma ({\mathcal L}_{\mu,\e}) } $ and 
$\sigma({\mathcal L}_{\mu,\e})$ a 1-periodic set with respect to $ \mu $, 
 we study the $ L^2(\bT) $-spectrum of  $  \cL_{\mu,\e} $  for $ \mu $ in the  ``first zone of Brillouin" 
$ 0 \leq \mu < \tfrac12 $.
\end{rmk}

The operator $ \cL_{\mu,\e}$ 
is  the complex  \emph{Hamiltonian} and \emph{reversible} operator
\begin{align}\label{WW}
 \cL_{\mu,\e} &:= \begin{bmatrix} (\pa_x+\im\mu)\circ (1+p_\e(x)) & 
 |D+\mu|  \\ -(1+a_\e(x)) & (1+p_\e(x))(\pa_x+\im \mu) \end{bmatrix} \\ 
 &= \underbrace{\begin{bmatrix} 0 & \uno\\ -\uno & 0 \end{bmatrix}}_{\displaystyle{=\cJ}} \underbrace{\begin{bmatrix} 1+a_\e(x) & -(1+p_\e(x))(\pa_x+\im \mu) \\ (\pa_x+\im\mu)\circ (1+p_\e(x)) & |D+\mu | 
 \end{bmatrix}}_{\displaystyle{=:\cB (\mu,\e) = \cB^* (\mu,\e) } } \, ,   \notag 
\end{align} 
that we regard  as an operator with 
domain $H^1(\bT):= H^1(\mathbb{T},\bC^2)$ and range $L^2(\bT):=L^2(\mathbb{T},\bC^2)$, 
equipped with  
the complex scalar product\footnote{The operator  $\cB^* (\mu,\e) $ in  \eqref{WW} 
is the adjoint with respect to the complex scalar product \eqref{scalar}.} 
\begin{equation}\label{scalar}
(f,g) := \frac{1}{2\pi} \int_\bT \left( f_1 \bar{g_1} + f_2 \bar{g_2} \right) \, \text{d} x  \, , 
\quad
\forall f= \vet{f_1}{f_2}, \ \  g= \vet{g_1}{g_2} \in  L^2(\bT, \bC^2) \, .
\end{equation} 
We also  denote $ \| f \|^2 = (f,f) $.  
The complex Hilbert space $ L^2 (\bT, \bC^2) $ is also equipped with the sesquilinear,  skew-Hermitian and non-degenerate  complex symplectic form  
\begin{equation}\label{ses}
{\cal W}_c  \, \colon L^2 (\bT, \bC^2) \times L^2 (\bT, \bC^2) \to \bC \, , \quad  {\cal W}_c(f, g) := (\cJ f,g) \, ,
\end{equation}
where $\cJ$ is defined in \eqref{HSsympl}. 
The complex operator $\cL_{\mu,\e}$ in  \eqref{WW}
is also reversible, namely  
\begin{equation}\label{revLB}
 \cL_{\mu,\e} \circ \bro =- \bro \circ \cL_{\mu,\e} \, , \quad \text{equivalently} \quad  
\cB (\mu,\e) \circ \bro = \bro \circ \cB (\mu,\e)   \, ,
\end{equation}
where $\bro$ is the complex involution (cfr. \eqref{revrho})
\begin{equation}\label{reversibilityappears}
 \bro \vet{\eta(x)}{\psi(x)} := \vet{\bar\eta(-x)}{-\bar\psi(-x)} \, .
\end{equation}
In addition $(\mu, \e) \to \cL_{\mu,\e} \in \cL(H^1(\bT), L^2(\bT))$ is analytic, 
and  ${\mathcal L}_{\mu,\e}$ is linear in $\mu$, being 
\begin{equation} \label{grazieStrauss}
|D+\mu| =  |D| + \mu \, 
\sgn^+(D) \,, \quad \forall \mu > 0 \,,
\end{equation}
where 
$ \sgn^+(j):= 1 $ if $ j\geq 0 $ and  $ \sgn^+(j):= - 1  $ for any  
$ j < 0 $. 
 
\smallskip

We aim to describe a {\em far-from-the-origin} spectral branching of eigenvalues of $  \cL_{\mu,\e}$  out of the imaginary axis.
The Hamiltonian structure of $\cL_{\mu,\e}$ allows such a branching to form only as perturbation of a \emph{multiple} purely imaginary eigenvalue
 of $\cL_{\mu,0}$.
\\[1mm]{\bf The spectrum of $\cL_{\mu,0}$.}\label{initialspectrum} 
The spectrum of the Fourier multiplier matrix operator 
\begin{equation}\label{cLmu}
 \cL_{\mu,0} = 
 \begin{bmatrix} 
\pa_x+\im\mu  & |D+\mu| \\ -1 & \pa_x+\im\mu \end{bmatrix} 
\end{equation}
on $L^2(\bT,\bC^2)$ is given by 
\begin{align}
& \lambda_j^\sigma(\mu)= \im \omega_j^\sigma (\mu) \, , \quad 
\omega_j^\sigma(\mu) := j+\mu -\sigma \sqrt{|j+\mu|} \, , 
\quad  j \in \bZ \, , \  \sigma=\pm \, , \ \text{and we write} \notag  \\
&  \omega_j^\sigma (\mu) = \omega^\sigma(j+\mu):= j+\mu -\sigma \Omega_j(\mu) \, ,   \quad  
 \Omega_j(\mu) := \Omega(j+\mu) :=  \sqrt{|j+\mu|}   \,  . 
 \label{omeghino}
\end{align}
 For any $ j + \mu \neq 0  $,  $ \sigma = \pm $, we associate to the  
 eigenvalue $\im\omega^\sigma_j(\mu) $ the eigenvector 
 \begin{equation}\label{def:fsigmaj}
 f^\sigma_j(\mu) := \frac{1}{\sqrt{2\Omega_j(\mu)}} \vet{-\sqrt{\sigma}\,\Omega_j(\mu)}{\sqrt{-\sigma}}e^{\im j x}\, , \qquad
 \cL_{\mu, 0} f^\sigma_j(\mu) = \im \omega^\sigma_j(\mu) f^\sigma_j(\mu) \ ,
\end{equation}
which  satisfies,
recalling \eqref{reversibilityappears}, the reversibility property   
\begin{equation}\label{baserev}
\bar \rho f_j^+ (\mu ) = f_j^+ (\mu )  \, , \quad 
\bar \rho f_j^- (\mu ) = - f_j^- (\mu ) \, . 
\end{equation}
For any $\mu \notin \bZ$ the family of  eigenvectors \eqref{def:fsigmaj} forms a {\it complex symplectic basis} of $L^2(\bT,\bC^2)$ with respect to the complex symplectic form ${\cal W}_c$ in \eqref{ses}, namely its elements are linearly independent, span densely $L^2(\bT,\bC^2) $ and satisfy, 
for any $ j \in \bZ $,  
\begin{equation}\label{sympbas}
{\cal W}_c \big( f_j^\sigma(\mu ), f_{j'}^{\sigma'}(\mu ) \big) = \begin{cases}  -\im & \text{if } j=j'\text{ and }\sigma=\sigma'=+ \, , \\ 
\ \; \im & \text{if } j=j'\text{ and }\sigma=\sigma'=- \, , \\
\ \; 0 & \text{otherwise} \, . \end{cases} 
\end{equation}
The choice of the normalization constant in \eqref{def:fsigmaj} 
implies \eqref{sympbas}. 

All the multiple nonzero eigenvalues of $\cL_{\mu,0} $  are given by the 
following lemma, cfr.
\cite{Nic}, where, 
in view of Remark \ref{Brillouin},  
we consider $ \mu \in [0,\frac12) $.

\begin{lem}[{\bf Multiple eigenvalues of $\cL_{\mu,0} $ away from $0$}]
\label{thm:unpert.coll} 
For any $\mu \in [0,\frac12)$, the spectrum of $\cL_{\mu,0}$ away from $0$ contains only simple or double eigenvalues:
\begin{enumerate}
    \item for any $\mu$ in $ \big(0,\tfrac14\big)\cup\big(\tfrac14,\tfrac12\big)$ the eigenvalues of $\cL_{\mu,0}$ are all simple;
    \item for $ \mu = \tfrac14 $  the double eigenvalues of $\cL_{\frac14,0}$ form the set $\big\{\im \omega_*^{(p)}\big\}_{\!\!\substack{p\geq 2\\ p \textup{ even}}} $ where
\begin{equation}\label{intcollision}
         \omega_*^{(p)} :=  \frac{p^2-1}{4}\, , \qquad p \in \bN ;
    \end{equation} 
    \item for $\mu = 0$ the double eigenvalues of $\cL_{0,0}$ form the set $\big\{\pm \im \omega_*^{(p)} \big\}_{\!\!\substack{p\geq 3\\ p \textup{ odd}}} $.
\end{enumerate}
Let $ p \in \bN$, $p\geq 2$.
The eigenspace associated with the double eigenvalue $\im \omega_*^{(p)} $ is spanned  by the eigenvectors  $f_k^-(\umu), \, f_{k'}^+(\umu) $ in \eqref{def:fsigmaj} where
\begin{equation}
 \begin{cases}
  \underline{\mu} = \frac14 \, , \quad k = n^2-n \, , \quad k'=k+p=n^2+n\,,  & \text{ if } p=2n \text{ is even} , \\
  \underline{\mu} = 0 \, , 
  \quad  k = n^2 \, , \quad k'=k+p=(n+1)^2\, ,
  & \text{ if } p=2n+1 \text{ is odd} .
 \end{cases}
\end{equation}
\end{lem}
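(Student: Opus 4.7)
The plan is to analyze directly when $\omega^\sigma(j+\mu) = \omega^{\sigma'}(j'+\mu)$ admits a solution with $(j,\sigma) \neq (j',\sigma')$, $\mu \in [0,\tfrac12)$, and non-zero common value, exploiting the simple algebraic form $\omega^\sigma(y) = y - \sigma\sqrt{|y|}$. Writing $y_j := j+\mu$, $y_{j'} := j'+\mu$, I would split the analysis into cases according to the Krein signs $(\sigma,\sigma')$ and the signs of $y_j, y_{j'}$, and reduce each case to an elementary algebraic relation between the square roots of $|y_j|$ and $|y_{j'}|$.

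\emph{Equal signs.} For $\sigma = \sigma'$, setting $u = \sqrt{|y_j|}$, $v = \sqrt{|y_{j'}|}$, the condition factors as $(u-v)(u+v \mp 1) = 0$, with sign depending on whether $y_j, y_{j'}$ are both positive or both negative. The branch $u = v$ forces $j = j'$, excluded. The branch $u + v = 1$ confines $y_j, y_{j'} \in [0,1]$, and combined with $|y_j - y_{j'}| = |j - j'| \geq 1$ forces $\{y_j, y_{j'}\} = \{0, 1\}$, which occurs only at $\mu = 0$ and corresponds to the zero eigenvalue (hence excluded). The mixed-sign analogue (one $y$ positive, the other negative, with equal $\sigma$) yields a bounded constraint such as $(u - \tfrac12)^2 + (v + \tfrac12)^2 = \tfrac12$, whose feasible region is incompatible with $|y_j - y_{j'}| \geq 1$ for $\mu \in [0,\tfrac12)$.

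\emph{Opposite signs.} For $\sigma = +$, $\sigma' = -$ the equation reads $y_j - y_{j'} = \sqrt{|y_j|} + \sqrt{|y_{j'}|}$, forcing $p := j - j' \geq 1$. If $y_{j'} \geq 0$ (hence also $y_j > 0$), with $a = \sqrt{y_j}$, $b = \sqrt{y_{j'}}$ the factorization $(a-b)(a+b) = a+b$ yields $a - b = 1$, whence $2b = p-1$ and $y_{j'} = (p-1)^2/4$. The common eigenvalue equals
\begin{equation*}
y_{j'} + b = \frac{(p-1)^2}{4} + \frac{p-1}{2} = \frac{p^2 - 1}{4} = \omega_*^{(p)},
\end{equation*}
and the fractional part of $(p-1)^2/4$ pins down $\mu = \tfrac14$ when $p$ is even and $\mu = 0$ when $p$ is odd. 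Reading off $(j',j) = (k, k')$ reproduces the stated formulas $(n^2-n, n^2+n)$ for $p = 2n$ and $(n^2, (n+1)^2)$ for $p = 2n+1$. The sub-case $y_j \leq 0$ is analogous and yields $-\omega_*^{(p)}$ at $\mu = 0$ for $p$ odd (the $p$-even branch lands at $\mu = 3/4 \notin [0,\tfrac12)$ and is discarded). The remaining mixed sub-case $y_j > 0 > y_{j'}$ reduces, via $a = \sqrt{y_j}$ and $b = \sqrt{-y_{j'}}$, to the circle $(a - \tfrac12)^2 + (b - \tfrac12)^2 = \tfrac12$, which confines $j \in \{0, 1\}$, $j' = -1$, and direct inspection of the resulting pairs leaves only $\mu = 0$ zero-eigenvalue solutions.

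These cases together reproduce items 1--3 of the statement. Double multiplicity (no triples) of $\pm \im\omega_*^{(p)}$ follows because for each fixed $\sigma$ the equation $\omega^\sigma(y) = \pm\omega_*^{(p)}$ is a quadratic in $\sqrt{|y|}$, admitting at most one admissible lattice point per sign of $y$; combined with the case analysis, the only pair achieving the value is the identified $\{(k,-),(k',+)\}$. The main obstacle is simply the thorough bookkeeping of the sign sub-cases and of the boundary $\mu = 0$ analysis to isolate zero-eigenvalue degeneracies; no further ingredient beyond the factorization $a^2 - b^2 = \pm(a+b)$ is required.
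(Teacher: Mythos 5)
The paper does not supply its own proof of this lemma: it states it with the reference ``cfr.\ \cite{Nic}'' (Akers--Nicholls), so there is no internal argument to compare against. Your direct derivation is a sensible self-contained substitute, and its substance is correct. The factorization $(u-v)(u+v\mp1)=0$ for equal Krein signs and $a-b=1$ for opposite signs is exactly what drives the classification; combining $a-b=1$ with $p=j-j'=a+b$ gives $y_{j'}=(p-1)^2/4$, and since $y_{j'}=j'+\mu$ with $\mu\in[0,\tfrac12)$, the fractional part of $(p-1)^2/4$ (which is $\tfrac14$ for $p$ even and $0$ for $p$ odd) pins down $\mu$, $k$, $k'$, and the common value $\omega_*^{(p)}=(p^2-1)/4$ exactly as stated. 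The exclusion of the mixed-sign subcases via the circles $(a\mp\tfrac12)^2+(b\pm\tfrac12)^2=\tfrac12$ together with $|y_j-y_{j'}|=|j-j'|\ge1$ also checks out, though it needs the small extra step of ruling out the boundary cases $|j-j'|\in\{1,2\}$ by direct inspection (which, as you note, lands only on zero-eigenvalue solutions at $\mu=0$).

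One step deserves more care. You justify double (rather than higher) multiplicity by asserting that for each $\sigma$ the quadratic $\omega^\sigma(y)=\pm\omega_*^{(p)}$ in $\sqrt{|y|}$ ``admits at most one admissible lattice point per sign of $y$.'' That is not a generic property of a quadratic, which may well have two positive roots. It does hold for the specific values here because $\omega_*^{(p)}\ge\tfrac34>\tfrac14$ for $p\ge2$, so the discriminant $1-4\omega_*^{(p)}$ in the subcases $(\sigma=+,y<0)$ and $(\sigma=-,y<0)$ is negative, leaving exactly one admissible index per Krein sign, both with $y>0$. If you prefer to avoid tracking discriminants, a cleaner route is pigeonhole on the Krein sign: any three distinct indices sharing a nonzero eigenvalue would include two with the same $\sigma$, which your equal-sign analysis already excludes for $\mu\in[0,\tfrac12)$ and nonzero $\omega$. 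Either way the conclusion stands, but the quadratic claim should be stated as a consequence of $\omega_*^{(p)}\ge\tfrac34$, not as an unconditional fact.
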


In view of Remark \ref{Brillouin} the splitting of the double eigenvalue $ -\im \omega_*^{(p)}$ for $\umu=0$ can be obtained by complex conjugation.
This paper aims to study the splitting of the closest-to-zero double eigenvalue of
$  \cL_{\frac14,0} $.  
{\bf Hence, in Section \ref{section:conti}, in view of Lemma \ref{thm:unpert.coll}, we shall fix}  
\begin{equation}
\label{choicek} p=2\, , \quad k=0\, , \quad k'=2\, , \quad  \umu:= \tfrac14 \quad \text{and denote} \ 
\omega_*:= \omega_*^{(2)}= 
\tfrac34 \, .
\end{equation}
We shall consider Floquet parameters $ \mu $ 
close to $ \und{\mu} $, so that $ \mu \notin \bZ $ and 
the family of  eigenvectors \eqref{def:fsigmaj} forms a
complex symplectic basis according to \eqref{sympbas}. 

The spectrum   $\sigma (\cL_{\underline \mu ,0})  $ decomposes in two disjoint parts: 
\begin{equation}
\label{spettrodiviso0}
\sigma (\cL_{\underline{\mu},0}) = \sigma' (\cL_{\underline{\mu},0}) \cup \sigma'' (\cL_{\underline{\mu},0})
\quad \text{where} \quad  \sigma'(\cL_{\underline{\mu},0}):= \big\{\im\omega_*^{(p)} \big\}
\end{equation}
is the double eigenvalue  in  \eqref{choicek}  and 
$ \sigma''(\cL_{\underline{\mu},0}):= \big\{ \lambda_j^\sigma(\umu ),\ (j,\sigma) \notin \Sigma \big\} $, where   
$\Sigma := \big\{ (k',+),\,(k,-) \big\}$,  
collects the other eigenvalues  $\lambda_j^\sigma(\umu )$   in \eqref{omeghino}.

By Kato's perturbation theory (Lemma \ref{lem:Kato1})
for any $ (\mu, \e) $  sufficiently close to $ (\umu, 0) $, the perturbed spectrum
$\sigma\left(\cL_{\mu,\e}\right) $ admits a disjoint decomposition 
$
\sigma\left(\cL_{\mu,\e}\right) = \sigma'\left(\cL_{\mu,\e}\right) \cup \sigma''\left(\cL_{\mu,\e}\right) $
where $ \sigma'\left(\cL_{\mu,\e}\right) 
$  consists of 2 eigenvalues close to the double eigenvalue  $\im \omega_*^{(p)} $ 
 of $\cL_{\umu,0}$. 
We denote by $\cV_{\mu, \e}$   the spectral subspace associated to  $\sigma'\left(\cL_{\mu,\e}\right) $, which   has  dimension 2 and it is  invariant under $\cL_{\mu, \e}$.
The next result provides the expansion of the matrix representing the action of the operator $\cL_{\mu,\e} \colon \cV_{\mu, \e} \to \cV_{\mu, \e}$.   
We denote $B(r):= \{ y \in \bR \colon \  |y| < r\}$ 
the real interval of length $2r$ centered in 0.
\begin{teo}\label{thm:expansion}
Assume \eqref{choicek}.
There exist $\e_0, \delta_0 >0$ such that
the operator $\cL_{\mu,\e} \colon \cV_{\mu, \e} \to \cV_{\mu, \e}$ for any  $(\mu,\e) = (\tfrac14+\delta, \e) \in B_{\delta_0}(\tfrac14) \times B_{\e_0}(0)$ is  represented 
by a $2\times 2$ 
 matrix with identical real off-diagonal entries and purely imaginary diagonal entries  of the form
\begin{equation} \label{final.mat}
{\footnotesize \begin{pmatrix} \im \frac34 + \im\frac23 \delta
-\im   \frac98 \e^2  
 +\im r_1(\e^3,\delta\e^2,\delta^2)\ \ \ \ \  - \frac{\sqrt{3}}{6} \delta\e^2  
- \frac{39 \sqrt{3}}{512} \e^4 + r_2 (\e^{5}, \delta\e^4, \delta^2\e^2, \delta^4\e)  \\[2mm]
\!\!\!- \frac{\sqrt{3}}{6} \delta\e^2  
- \frac{39 \sqrt{3}}{512} \e^4 + r_2 (\e^{5}, \delta\e^4, \delta^2\e^2, \delta^4\e)  \ \  \ \ \ \im\frac34 + \im 2\delta+   \frac{\im}{16}\e^2 +  
  \im r_3 (\e^3,\delta\e^2,\delta^2) 
\end{pmatrix}}\, .
\end{equation}
\end{teo}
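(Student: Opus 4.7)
The strategy is to implement the symplectic-reversible Kato reduction developed in \cite{BMV1,BMV2}: build a ${\cal W}_c$-symplectic and $\bro$-reversible basis of $\cV_{\mu,\e}$ starting from the unperturbed eigenbasis at the double eigenvalue $\im\tfrac34$, use the Hamiltonian and reversible structure to constrain the shape of the resulting $2\times 2$ matrix, and finally compute its Taylor coefficients using the explicit fourth-order expansions \eqref{apexp} of $p_\e, a_\e$ together with the unperturbed eigenbasis \eqref{def:fsigmaj} at $\umu=\tfrac14$.

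\textbf{Steps 1--2: construction of the basis and structural form.} By Lemma \ref{lem:Kato1}, for $(\mu,\e)$ close to $(\tfrac14,0)$ there is an analytic family of bounded invertible operators $U_{\mu,\e}$ mapping the unperturbed spectral subspace $\cV_{\frac14,0} = \mathrm{span}\{f_0^-(\tfrac14),\, f_2^+(\tfrac14)\}$ onto $\cV_{\mu,\e}$. Exploiting the Hamiltonian nature of $\cL_{\mu,\e}$ together with \eqref{baserev} and \eqref{revLB}, a canonical symplectic-reversible correction of $U_{\mu,\e}$ (carried out as in \cite{BMV1,BMV2}) makes it intertwine both the form ${\cal W}_c$ and the involution $\bro$. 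The resulting vectors
\[
\varphi_1(\mu,\e) := U_{\mu,\e} f_0^-(\tfrac14), \qquad \varphi_2(\mu,\e) := U_{\mu,\e} f_2^+(\tfrac14)
\]
then form a ${\cal W}_c$-symplectic basis of $\cV_{\mu,\e}$ satisfying $\bro\varphi_1 = \varphi_1$ and $\bro\varphi_2 = -\varphi_2$. Let $\tL(\mu,\e)$ denote the matrix of $\cL_{\mu,\e}\vert_{\cV_{\mu,\e}}$ in this basis. The reversibility relation $\cL_{\mu,\e}\bro = -\bro\cL_{\mu,\e}$, combined with the parities of $\varphi_1,\varphi_2$ under $\bro$, forces the diagonal entries of $\tL$ to be purely imaginary and the off-diagonal ones to be real; the Hamiltonian character of $\cL_{\mu,\e}$ together with the normalization \eqref{sympbas} further forces the two off-diagonal entries to coincide. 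This produces the structural form displayed in \eqref{final.mat}.

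\textbf{Step 3: Taylor expansion of the entries.} Writing $\mu = \tfrac14 + \delta$, one expands $\cL_{\mu,\e}$ using its linearity in $\mu$ (cfr.\ \eqref{grazieStrauss}) together with the Taylor series \eqref{apexp} of $p_\e, a_\e$, and $U_{\mu,\e} = \uno + \sum_{n\ge 1} U_n(\delta,\e)$ via Kato's resolvent formulas, with $U_n$ built by successive applications of the reduced resolvent of $\cL_{\frac14,0}$ to the perturbation. The entries of $\tL(\mu,\e)$ are then recovered by pairing $\cL_{\mu,\e}\varphi_j$ with the ${\cal W}_c$-dual basis. Pushing the diagonal entries to the orders $\e^2,\delta$ and the off-diagonal entry to the orders $\e^4,\delta\e^2,\delta^2\e^2,\delta^4\e$ prescribed by \eqref{final.mat} yields the claimed expansions.

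\textbf{Main obstacle.} The hard point is that the off-diagonal coefficient vanishes at the naively leading order $\e^2$ in infinite depth --- the degeneracy emphasized in the introduction --- so one must push both the Kato expansion and the Stokes expansion to fourth order to extract the non-trivial coefficient $-\tfrac{39\sqrt{3}}{512}$. This entails tracking many higher-order resolvent contractions against basis vectors $f_j^\sigma(\tfrac14)$ over a range of Fourier modes $j$, and carefully organizing cancellations among terms generated by the higher Stokes coefficients $p_3,a_3,p_4,a_4$ in \eqref{apexp}. The reality of the off-diagonal entry, guaranteed a priori by reversibility, provides a welcome consistency check throughout. These computations will be performed systematically in Sections \ref{sec:5} and \ref{section:conti}.
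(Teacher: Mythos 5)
Your proposal follows essentially the same approach as the paper: the symplectic-reversible Kato reduction of Lemma \ref{lem:Kato1}, the structural constraints from the Hamiltonian and reversible character (giving a $2\times2$ matrix with purely imaginary diagonal and equal real off-diagonal entries), and a fourth-order Taylor expansion of the entries to reach the nondegenerate coefficient $-\tfrac{39\sqrt{3}}{512}$. Two minor remarks: by \eqref{baserev} and the fact that $U(\mu,\e)$ commutes with $\bro$, one actually has $\bro\varphi_1 = -\varphi_1$ and $\bro\varphi_2 = +\varphi_2$ (you swapped the signs, which is harmless for the conclusion); and the paper streamlines Step 3 by expressing $\mathfrak{B}=P_0^*U^*\cB U P_0$ solely in terms of the projector jets $P_j$ via Lemma \ref{expansionthm}, avoiding a direct expansion of $U$, which your formulation would require.
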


Theorem  \ref{thm:main} follows directly from Theorem \ref{thm:expansion} 
as shown in the beginning of Section \ref{section:conti}.

\section{Perturbation of separated eigenvalues and instability criteria}\label{Katoapp}

We briefly recall  Kato's similarity transformation 
theory  as developed in \cite[Section 3]{BMV1}. 
The following result is proved as in
\cite[Lemmata 3.1, 3.2]{BMV1}  
with the only difference that concerns perturbations of 
a non zero eigenvalue $\im \omega_*^{(p)} $ of $\cL_{\umu,0} $. We remind that the operators $ \cL_{\mu,\e}  : Y \subset X \to X $   
has domain $Y:=H^1(\mathbb{T}):=H^1(\mathbb{T},\bC^2)$ and  range $X:=L^2(\mathbb{T}):=L^2(\mathbb{T},\bC^2)$. 

\begin{lem}\label{lem:Kato1} Fix $p\in\bN$, $p\geq 2$.
Let $\Gamma$ be a closed  counter-clocked wise oriented curve winding around
the double eigenvalue 
 $ 
 \im \omega_*^{(p)} $  of $ \cL_{\umu, 0} $ given by Lemma \ref{thm:unpert.coll}
 in the complex plane, separating $\sigma'
 (\cL_{\umu, 0}) $
  and the other part of the spectrum $\sigma'' (\cL_{\umu,0})$ in \eqref{spettrodiviso0}.
Then there exist $\e_0, \delta_0>0$  such that for any $(\mu, \e) \in B_{\delta_0}(\umu)\times B_{\e_0}(0)$  the following hold:
\\[1mm]
1. 
The curve $\Gamma$ belongs to the resolvent set of 
the operator $\cL_{\mu,\e} : Y \subset X \to X $ defined in \eqref{WW}. 
The operators
\begin{equation}\label{Pproj} 
  P (\mu,\e) :=  -\frac{1}{2\pi\im}\oint_\Gamma (\cL_{\mu,\e}-\lambda)^{-1} \de\lambda : X \to Y 
\end{equation}  
are  projectors commuting  with $\cL_{\mu,\e}$,  i.e. 
$ P({\mu,\e})^2 = P({\mu,\e}) $ and 
$ P({\mu,\e})\cL_{\mu,\e} = \cL_{\mu,\e} P({\mu,\e}) $.
The map $(\mu, \epsilon)\mapsto P({\mu,\e})$ is  analytic from 
$B_{\delta_0}(\umu)\times B_{\e_0}(0)$
 to $ \cL(X, Y)$.  The projectors $P({\mu,\e})$ 
are skew-Hamiltonian, namely $\cJ P({\mu,\e})=P({\mu,\e})^*\cJ $,  and reversibility preserving, i.e. 
$ \bro P({\mu,\e}) = P({\mu,\e})  \bro $.
 \\[1mm]
2. 
The domain $Y$  of  $\cL_{\mu,\e}$ decomposes as  the direct sum
$ Y= \mathcal{V}_{\mu,\e} \oplus \text{Ker}(P({\mu,\e})) $ 
of the  closed  subspaces $\mathcal{V}_{\mu,\e} 
:=\text{Rg}(P({\mu,\e})) $, $ \text{Ker}(P({\mu,\e})) $, 
which are invariant under $\cL_{\mu,\e}$, and  
$$
\sigma(\cL_{\mu,\e})\cap \{ z \in \bC \mbox{ inside } \Gamma \} = \sigma(\cL_{\mu,\e}\vert_{{\mathcal V}_{\mu,\e}} )  = \sigma'(\cL_{\mu, \e}) \, . 
$$
3.   The projector $P({\mu,\e})$ 
is conjugated to $P(\umu,0) $ through an operator $U(\mu,\e)$,  bounded and  invertible in $ Y $ and in $ X $, and
\begin{equation} \label{OperatorU} 
U({\mu,\e}) P(\umu,0)  =  P({\mu,\e}) U({\mu,\e})=
\big( \uno-(P({\mu,\e})-P (\umu,0))^2 \big)^{-1/2}
P({\mu,\e})P (\umu,0) \, ,
\end{equation}
The map $(\mu, \epsilon)\mapsto  U({\mu,\e})$ is analytic from  $B_{\delta_0}(\umu)\times B_{\e_0}(0)$ to $\cL(Y)$. 
The transformation operator $U({\mu,\e})$  is symplectic, i.e.
 $ U({\mu,\e})^* \cJ U({\mu,\e})= \cJ $, and reversibility preserving.
One has
$
\mathcal{V}_{\mu,\e}=  U({\mu,\e})\mathcal{V}_{\umu,0}
$ and $\dim \mathcal{V}_{\mu,\e} = \dim \mathcal{V}_{\umu,0}= 2 $.
\end{lem}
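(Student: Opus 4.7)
The plan is to execute Kato's similarity transformation argument in the complex symplectic and reversible setting, following closely the scheme of \cite[Lemmata 3.1, 3.2]{BMV1}; since the construction is entirely local to the curve $\Gamma$, the fact that we now enclose the double eigenvalue $\im\omega_*^{(p)}\neq 0$ instead of $0$ plays no substantive role. First I would establish that $\Gamma\subset\rho(\cL_{\mu,\e})$ uniformly on a sufficiently small bidisk $B_{\delta_0}(\umu)\times B_{\e_0}(0)$: by Lemma \ref{thm:unpert.coll} the unperturbed resolvent $(\cL_{\umu,0}-\lambda)^{-1}$ is uniformly bounded on $\Gamma$, and the analyticity of $(\mu,\e)\mapsto\cL_{\mu,\e}\in\cL(H^1(\bT),L^2(\bT))$, inherited from the analytic dependence of $p_\e,a_\e$ on $\e$ and from the linear dependence on $\mu$ via \eqref{grazieStrauss}, combined with a Neumann series, extends invertibility of $\cL_{\mu,\e}-\lambda$ to the bidisk. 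Consequently \eqref{Pproj} defines an analytic family of projectors $P(\mu,\e)\in\cL(X,Y)$, and the identities $P^2=P$ and $P\cL_{\mu,\e}=\cL_{\mu,\e}P$ follow from the standard Dunford calculus.

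Next I would lift the Hamiltonian and real structures of $\cL_{\mu,\e}$ to $P(\mu,\e)$. The factorization $\cL_{\mu,\e}=\cJ\cB(\mu,\e)$ with $\cB^*=\cB$ and $\cJ^*=-\cJ$ gives $\cJ\cL_{\mu,\e}=-\cL_{\mu,\e}^*\cJ$, whence $\cJ(\cL_{\mu,\e}-\lambda)^{-1}=-(\cL_{\mu,\e}^*+\lambda)^{-1}\cJ$; inserting this into \eqref{Pproj} and performing the change of variable $\lambda\mapsto-\lambda$, with careful tracking of orientation and of the $\pm\lambda$ Hamiltonian symmetry of $\sigma(\cL_{\mu,\e})$, one recovers $P^*\cJ$, i.e.\ $\cJ P=P^*\cJ$. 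Analogously, from $\bro\cL_{\mu,\e}=-\cL_{\mu,\e}\bro$ and antilinearity of $\bro$ one deduces $\bro(\cL_{\mu,\e}-\lambda)^{-1}=-(\cL_{\mu,\e}+\bar\lambda)^{-1}\bro$, and the substitution $\lambda\mapsto-\bar\lambda$, combined with the $\lambda\leftrightarrow-\bar\lambda$ symmetry of the spectrum, yields $\bro P=P\bro$.

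From $P^2=P$ one obtains the splitting $Y=\cV_{\mu,\e}\oplus\ker P(\mu,\e)$ into $\cL_{\mu,\e}$-invariant subspaces, and $\sigma(\cL_{\mu,\e}\vert_{\cV_{\mu,\e}})=\sigma'(\cL_{\mu,\e})$ is the classical Riesz-projector property. To build the Kato transformation operator, set $Q(\mu,\e):=P(\mu,\e)-P(\umu,0)$; analyticity and $Q(\umu,0)=0$ make $\|Q\|$ arbitrarily small on a suitable bidisk, so $(\uno-Q^2)^{-1/2}$ is defined by its binomial series and depends analytically on $(\mu,\e)$, and the algebraic identities $PQP=(\uno-P)Q(\uno-P)=0$ together with their $P(\umu,0)$-analogs ensure that the formula \eqref{OperatorU} intertwines $P(\umu,0)$ with $P(\mu,\e)$, is boundedly invertible on $X$ and $Y$, and satisfies $\cV_{\mu,\e}=U(\mu,\e)\cV_{\umu,0}$. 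Symplecticness $U^*\cJ U=\cJ$ is then obtained by propagating the two relations $\cJ P=P^*\cJ$ and $\cJ P(\umu,0)=P(\umu,0)^*\cJ$ through \eqref{OperatorU}: they imply $\cJ Q=Q^*\cJ$, hence by the holomorphic functional calculus $\cJ(\uno-Q^2)^{-1/2}=((\uno-Q^2)^{-1/2})^*\cJ$, and the symplectic identity reduces to an algebraic check on the product of projectors. Reversibility preservation $\bro U=U\bro$ follows in the same spirit from $\bro P=P\bro$ and the corresponding relation for $P(\umu,0)$.

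The most delicate point will be the symplectic and reversibility bookkeeping of the second paragraph: the antilinear involution $\bro$ and the skew-Hermitian form \eqref{ses} interact nontrivially with the complex contour integral \eqref{Pproj}, forcing careful attention to orientation and to the Hamiltonian/complex-conjugate symmetries of $\sigma(\cL_{\mu,\e})$. Nonetheless, precisely these computations are performed in \cite[Section 3]{BMV1}, and they transpose verbatim to the present situation, with the perturbation center $0$ replaced by $\im\omega_*^{(p)}$.
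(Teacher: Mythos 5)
Your proposal is correct and follows essentially the same route as the paper, which itself simply invokes \cite[Lemmata 3.1, 3.2]{BMV1} and adds the one-line remark that, with $\gamma(t)=\im\omega_*^{(p)}+re^{\im t}$, the transformed contour $-\bar\gamma(t)$ winds around $\im\omega_*^{(p)}$ clockwise so the orientation bookkeeping closes. Your outline correctly identifies the two nontrivial steps -- the identity $\cJ(\cL_{\mu,\e}-\lambda)^{-1}=-(\cL_{\mu,\e}^*+\lambda)^{-1}\cJ$ (and its reversibility analogue) and the fact that the substitutions $\lambda\mapsto-\lambda$ and $\lambda\mapsto-\bar\lambda$ map $\Gamma$ to homologous small circles about $\mp\im\omega_*^{(p)}$ with the right orientation -- and your remaining observations (analyticity via Neumann series, the Kato operator $U$ defined through $(\uno-Q^2)^{-1/2}$, and the propagation of symplecticity/reversibility to $U$ via holomorphic functional calculus) are exactly the arguments the cited reference uses.
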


\begin{rmk}
The proof that $ P_{\mu,\e} $ is skew-Hamiltonian and reversibility preserving holds as in 
\cite[Lemma 3.1]{BMV1}  
choosing $ \gamma (t) = \im \omega_*^{(p)} + r e^{\im t}$ so that  $ - \bar \gamma (t) $ winds around 
$ \im \omega_*^{(p)} $ clockwise.
\end{rmk}

 We consider the basis   
\begin{equation}\label{basisF}
{\cal F} := \{ 
f^+(\mu,\e),   f^- (\mu,\e)\} \, , \quad 
f^+(\mu,\e) := U({\mu,\e}) f_{k'}^+ , 
\ \ 
f^-(\mu,\e) := U({\mu,\e}) f_{k}^- \, , 
\end{equation}
of the subspace $ {\mathcal V}_{\mu,\e} $,  
obtained applying  the transformation operators $ U({\mu,\e}) $ of Lemma \ref{lem:Kato1}
to the eigenvectors  $f_{k'}^+:=f_{k'}^+(\umu )$, $f_{k}^-:=f_{k}^-(\umu )$  in \eqref{def:fsigmaj}
of $ \cL_{\umu,0}$, 
which, by Lemma  \ref{thm:unpert.coll}, form a basis of the eigenspace
$ \mathcal{V}_{\underline \mu,0}  $ associated with $\im \omega_*^{(p)}\!\!\! $, for any fixed integer $p\geq 2$.

\begin{lem} {\bf (Matrix representation of $ \cL_{\mu,\e}$ on $ \mathcal{V}_{\mu,\e}$)}
Fix $p\in \bN $, $p\geq 2$. The operator $\cL_{\mu,\e}: \mathcal{V}_{\mu,\e}\to\mathcal{V}_{\mu,\e} $ in 
\eqref{WW} 
 is  represented on the basis ${\cal F}$ in \eqref{basisF} 
 by the $2\times 2$ Hamiltonian and reversible matrix
 $$
 \tL(\mu,\e)=\tJ\tB(\mu,\e), \quad  \tJ :=  
 \begin{pmatrix} 
-\im & 0  \\
0  & \im 
\end{pmatrix} \, , \qquad \forall (\mu, \e) \in B_{\delta_0}(\umu)\times B_{\e_0}(0)\, ,
 $$
 where
\begin{equation} \label{tocomputematrix}
 \tB(\mu,\e)  := \begin{pmatrix}
( \mathfrak{B}({\mu,\e}) f^+_{k'}, f^+_{k'}) 
& 
(\mathfrak{B}({\mu,\e})  f^-_k, f^+_{k'}) \\ 
(\mathfrak{B}({\mu,\e}) f^+_{k'}, f^-_k) 
&
(\mathfrak{B}({\mu,\e}) f^-_k, f^-_k)
\end{pmatrix}  
:= \begin{pmatrix} \alpha(\mu,\e) & \im \beta(\mu,\e) \\ -\im \beta(\mu,\e) & \gamma(\mu,\e) \end{pmatrix}  \, ,
\end{equation}
the functions  $ \alpha(\mu,\e),  \beta (\mu,\e), \gamma (\mu,\e) $ are real analytic, and  
\begin{equation}\label{Bgotico}
\mathfrak{B} ({\mu,\e}) := P({\umu,0})^* \, U({\mu,\e})^* \, {\cal B}({\mu,\e}) \, U({\mu,\e}) \, P({\umu,0})
\, . \end{equation}  
Furthermore 
\begin{equation}\label{bmuzero} 
\tB(\mu,0) = 
\begin{pmatrix}  \alpha (\mu,0)  & \im \beta (\mu,0)  \\ 
- \im \beta (\mu,0) &  \gamma (\mu,0)  \end{pmatrix} = 
\begin{pmatrix} -\omega_{k'}^+(\mu)  & 0 \\ 0 &  \omega_k^-(\mu)  \end{pmatrix}  
\end{equation} 
with $\omega_j^\sigma (\mu) $ in \eqref{omeghino}, in particular
$ \tB(\umu,0)={\footnotesize \begingroup 
\setlength\arraycolsep{2pt}\begin{pmatrix} -\omega_*^{(p)}  & 0 \\ 0 &  \omega_*^{(p)}\end{pmatrix}\endgroup} $. 
\end{lem}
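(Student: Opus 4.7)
The plan is to establish the matrix representation in three steps: (a) a complex symplectic basis of $\cV_{\mu,\e}$; (b) the explicit form of the matrix together with the real-analyticity of its entries; (c) the evaluation at $\e=0$. The first two steps are fairly routine from the symplectic Kato apparatus of Lemma \ref{lem:Kato1}; the real work is concentrated in the last step.

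For (a), since $U(\mu,\e)$ is symplectic ($U^*\cJ U=\cJ$ by Lemma \ref{lem:Kato1}), the complex symplectic relations of $\{f_{k'}^+,f_k^-\}$ listed in \eqref{sympbas} transfer verbatim to $\cF$: $\cW_c(f^+(\mu,\e),f^+(\mu,\e))=-\im$, $\cW_c(f^-(\mu,\e),f^-(\mu,\e))=\im$, with the mixed pairings vanishing. For (b), I write $\cL_{\mu,\e}=\cJ\cB(\mu,\e)$ and use the identity $\cW_c(\cL u,v)=(\cJ^2\cB u,v)=-(\cB u,v)$; expanding $\cL_{\mu,\e}$ in the basis $\cF$ and pairing with $\cW_c(\cdot,f^\pm(\mu,\e))$ extracts the entries of $\tL$ up to the normalisations $\mp\im$. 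A short computation using $\tJ=\mathrm{diag}(-\im,\im)$ together with $(\cB Uf_j,Uf_i)=(U^*\cB U f_j, f_i)=(\mathfrak{B}(\mu,\e)f_j,f_i)$ (the last equality from $P(\umu,0)f_i=f_i$) produces $\tL=\tJ\tB$ with $\tB$ as in \eqref{tocomputematrix}. Reality of $\alpha$ and $\gamma$ is immediate from self-adjointness of $\mathfrak{B}$, itself a consequence of $\cB^*=\cB$ and \eqref{Bgotico}. Reality of $\beta$ instead follows from the reversibility identities $\mathfrak{B}\bro=\bro\mathfrak{B}$, $\bro f^+_{k'}=f^+_{k'}$, $\bro f^-_k=-f^-_k$ (\eqref{baserev}) and the antilinear-isometry property $(\bro u,\bro v)=\overline{(u,v)}$: these together force $(\mathfrak{B}f^-_k,f^+_{k'})=-\overline{(\mathfrak{B}f^-_k,f^+_{k'})}$, i.e., purely imaginary. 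Real-analyticity of $\alpha,\beta,\gamma$ in $(\mu,\e)$ is inherited from the joint analyticity of $U$, $P$ and $\cB$; the Hamiltonian and reversible character of $\tL$ is then the algebraic consequence of $\tL=\tJ\tB$ with $\tB$ Hermitian and the reality of its three parameters.

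For (c), the key point is that $\cL_{\mu,0}$ is a Fourier multiplier, so each single-frequency subspace $V_j:=\{\vec{u}\,e^{\im j x}:\vec{u}\in\bC^2\}$ is preserved by $\cL_{\mu,0}$ and hence, via the resolvent formula \eqref{Pproj}, also by both $P(\mu,0)$ and $P(\umu,0)$. Consequently $V_{k'}\cap\mathrm{Rg}\,P(\mu,0)=\mathrm{span}\{f^+_{k'}(\mu)\}$ and the Kato formula \eqref{OperatorU} -- whose normalising factor $(\uno-(P(\mu,0)-P(\umu,0))^2)^{-1/2}$ also stabilises $V_{k'}$ -- yields $U(\mu,0)f^+_{k'}(\umu)=c_+(\mu)\,f^+_{k'}(\mu)$ for some scalar $c_+(\mu)$. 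The symplecticity of $U$ forces $|c_+|=1$; reversibility forces $c_+\in\bR$; and continuity at $\mu=\umu$ (where $U=\uno$ on $\mathrm{Rg}\,P(\umu,0)$) pins down $c_+=1$. The same argument on $V_k$ gives $U(\mu,0)f^-_k(\umu)=f^-_k(\mu)$. From $\cB(\mu,0)f^\sigma_j(\mu)=-\im\omega^\sigma_j(\mu)\,\cJ f^\sigma_j(\mu)$ and the symplectic normalisations \eqref{sympbas}, a direct computation then yields $\alpha(\mu,0)=-\omega^+_{k'}(\mu)$, $\gamma(\mu,0)=\omega^-_k(\mu)$, and $\beta(\mu,0)=0$, the last assertion being a restatement of $\cW_c(f^+_{k'}(\mu),f^-_k(\mu))=0$.

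The main obstacle is the identification $U(\mu,0)f^\pm_{?}(\umu)=f^\pm_{?}(\mu)$ in step (c): it is not a consequence of abstract Kato theory but genuinely uses the block-diagonal structure of $P(\mu,0)$ with respect to Fourier modes at $\e=0$, together with the fine-tuning of the scalar $c_\pm(\mu)$ via symplecticity, reversibility, and continuity. Once this identification is in hand, the remaining verifications reduce to bookkeeping within the 2-dimensional symplectic/reversible algebra of step (b).
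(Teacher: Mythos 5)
Your proof is correct and follows essentially the same route as the paper: step (a)--(b) is the symplectic Kato calculus giving the matrix entries and their reality, and step (c) uses the Fourier-multiplier structure of everything at $\e=0$ to identify $U(\mu,0)f^\pm_{\bullet}(\umu)$ with a scalar multiple of $f^\pm_{\bullet}(\mu)$. The only stylistic difference is that you pin down the scalar $c_\pm(\mu)$ exactly to $1$ via reversibility and continuity; the paper is terser and simply notes that $U(\mu,0)f^+_{k'}$, $U(\mu,0)f^-_k$ are multiples of $f^+_{k'}(\mu)$, $f^-_k(\mu)$, which already suffices since symplecticity gives $|c_\pm|=1$ and the matrix entries only depend on $|c_\pm|^2$.
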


\begin{proof}
In view of \eqref{sympbas} and \eqref{baserev},
the basis $ \{ f^+_{k'}, f^-_{k}\} $  of $ \mathcal{V}_{\underline \mu,0} $ 
is complex  symplectic and reversible
and, since $U({\mu,\e})$ is symplectic and  reversibility preserving,  
the 
  basis $  {\cal F}  $ in \eqref{basisF} is a complex symplectic and reversible 
  basis of $\mathcal{V}_{\mu,\e} $. 
Given a complex symplectic basis  $\{\tf^+,\tf^-\} $, any vector $\tf $ in $ \langle \mathtt{f}^+, \mathtt{f}^- \rangle $ verifies
$
\tf = \im  \molt{ \cJ  \tf}{\tf^+} \tf^+  
 - \im \molt{\cJ  \tf}{\tf^-} \tf^- $ whence we obtain 
 \eqref{tocomputematrix}-\eqref{Bgotico}. The function $ \beta$ is real by the reversibility property  
\eqref{revLB} and \eqref{baserev}. 

Let us prove \eqref{bmuzero}. 
The operator  $\kB ({\mu,0}) $ in \eqref{Bgotico} is a Fourier multiplier, since $\cL_{\mu,0}$ in \eqref{WW} is a Fourier multiplier, and so is $\cB (\mu,0)$ in  \eqref{WW}, $P(\mu,0)$ in \eqref{Pproj}, and finally $U(\mu,0)$ in \eqref{OperatorU}. 
As a consequence
$\beta(\mu,0) = \left( \kB ({\mu,0}) f_k^-, f_{k'}^+ \right)  = 0 $.
Then we exploit that 
 $\cL_{\mu,0} $ has eigenvectors 
$$
\cL_{\mu,0} f_{k'}^\pm(\mu) = \im \omega_{k'}^\pm(\mu) f_{2}^\pm(\mu)\, ,\quad \cL_{\mu,0} f_{k}^\pm(\mu) = \im \omega_{k}^\pm(\mu) f_{k}^\pm(\mu) \, , 
$$
in \eqref{def:fsigmaj}
and $U(\mu,0)f_{k'}^+ $ and $U(\mu,0)f_k^- $ are also eigenvectors of $\cL_{\mu,0} $ which, by continuity, are multiples respectively of $f_{k'}^+(\mu) $ and $f_k^-(\mu) $. 
\end{proof}

The eigenvalues of the matrix $\tL(\mu,\e)$ 
in \eqref{tocomputematrix} are 
\begin{equation}\label{eigenvalues}
\lambda^\pm (\mu,\e) = \tfrac\im2 S(\mu,\e)\pm\tfrac12\sqrt{
D(\mu,\e) 
}
\end{equation}
where
 \begin{align}\label{traceB}
 S(\mu,\e)  & :=  \gamma(\mu,\e) - \alpha(\mu,\e)\, ,  \\  \label{discriminant}
 D(\mu,\e)  & := 4 \beta^2 (\mu,\e) - T^2 (\mu,\e)  \, , \quad
 T(\mu,\e):= \alpha(\mu,\e) + \gamma(\mu,\e)  \, .
 \end{align}
The goal of the next sections is to prove, for $(\mu,\e) $ close to  $(\umu,0) $,
the existence of  eigenvalues of the matrix $\tL(\mu,\e)$ with non zero real part.
We now formulate abstract instability criteria to completely describe 
the spectrum of a  $ 2 \times 2 $  
 matrix of the form \eqref{tocomputematrix}.

 \paragraph{Abstract instability criteria.}\label{sec:mr}
We consider a  $ 2 \times 2 $  Hamiltonian and reversible 
 matrix 
 \begin{equation} \label{tocomputematrix1}
\tL(\mu,\e)=\tJ\tB(\mu,\e), \quad  
\tB(\mu,\e) = \begin{pmatrix} \alpha(\mu,\e) & \im \beta(\mu,\e) \\ -\im \beta(\mu,\e) & \gamma(\mu,\e) \end{pmatrix}   \, , 
\end{equation}
 where  $\alpha(\mu,\e)$, $\beta(\mu,\e)$, $\gamma(\mu,\e)$ are 
 real analytic functions 
defined 
in a neighborhood $ B_{\delta_0}(\umu)\times B_{\e_0}(0) $ of $(\umu,0)$, $\umu \in \bR$. 
We make the following 
\begin{ass}\label{H}  The real analytic entries $\alpha(\mu,\e),\beta(\mu,\e),\gamma(\mu,\e) $ of the matrix \eqref{tocomputematrix1} admit  
for  $(\mu,\e) = (\umu+\delta,\e)\in  B_{\delta_0}(\umu)\times B_{\e_0}(0)$ 
an  expansion of the form 
 \begin{subequations}\label{matrixentries}
\begin{align}\label{expa}
 \alpha(\umu+\delta,\e) & = -\alpha_0(\delta)
 + \alpha_2\e^2  +  r(\e^3,\delta\e^2,\delta^2\e)\, ,\\
\label{expb}
  \beta(\umu+\delta,\e) & = \beta_1 \e^2+\beta_2 \delta\e^2 +  \beta_3 \e^4 + r (\e^{5},\delta\e^3,\delta^2\e^2,\delta^3\e)\, , \\
    \label{expc}
  \gamma(\umu+\delta,\e) & = \gamma_0(\delta)  + \gamma_2 \e^2  + r(\e^3,\delta\e^2,\delta^2\e) \, , 
\end{align}
\end{subequations}
where 
\begin{equation}\label{epsilonspento}
\alpha_0(\delta)  = \omega_*^{(p)} - \alpha_1 \delta + r(\delta^2)\, , \qquad 
\gamma_0(\delta)= \omega_*^{(p)}  + \gamma_1 \delta  + r(\delta^2)\, ,
\end{equation}
and 
\begin{equation}\label{alpha1gamma1}
\alpha_1 + \gamma_1 > 0  \ .
\end{equation}
\end{ass}
In view of  Assumption \ref{H} the trace   
 $ T(\mu,\e):= \text{Tr}\,\tB(\mu,\e) = \alpha(\mu,\e) + \gamma(\mu,\e) $
  expands as 
 \begin{equation}\label{expT}
 \begin{aligned}
T(\umu + \delta,\e)  & = T(\umu + \delta,0) +T_2\e^2 + r(\e^3,\delta\e^2)  \ ,  \qquad \ T_2 :=  \alpha_2+\gamma_2  \, ,  \\
T(\umu + \delta,0)&  = T_1 \delta + r( \delta^2) \ , \qquad
T_1 := \alpha_1 + \gamma_1  >0  \  .
\end{aligned}
\end{equation}
By \eqref{expT} and the analytic implicit function theorem there exists $\e_1 \in (0, \e_0) $ such that
the zero set of the trace $T(\mu,\e)$ 
and of the functions 
  \begin{equation}\label{def-discri}
 d_\pm  (\mu,\e) :=  T (\mu,\e) \pm 2\beta (\mu,\e) 
 \end{equation}
 are, in the set 
 $ B_{\delta_0}(\umu)\times  B_{\e_1 }(0)$, 
graphs of analytic functions 
\begin{equation}\label{impfunc}
\mu_0 \, , \mu_\pm : 
(-\e_1 ,\e_1 ) \mapsto \mu_0 (\e) \, , \ \mu_\pm  (\e) \, ,\ \text{i.e.}\ 
 T (\mu_0 (\e),\e) \equiv 0\ \text{ and }\ d_\pm  (\mu_\pm (\e),\e  ) \equiv 0 \, , 
 \end{equation}
satisfying, by \eqref{expT}, 
 the Taylor expansions
\begin{equation}\label{muexpa}
\begin{aligned}
& \mu_0 (\e) = \umu - \frac{T_2}{T_1} \e^2 + r(\e^3) \, , 
& \mu_\pm (\e) = \umu - \frac{T_2\pm 2 \beta_1}{T_1} \e^2 + r_\pm(\e^3) \, . 
\end{aligned}
\end{equation}
Since $T_1>0 $, the functions $d_\pm(\mu,\e) $ are strictly positive (resp. negative) for $\mu>\mu_\pm(\e)$ (resp.  $\mu<\mu_\pm(\e)$). In addition, since $d_\pm(\mu_0(\e),\e ) =\pm 2\beta(\mu_0(\e),\e)   $  we deduce that for any $\e \in (-\e_1,\e_1) $
\begin{equation}\label{maxalternative}
\begin{cases}
\mbox{if }\beta(\mu_0(\e),\e) >0 & \mbox{then }\ \mu_+(\e) <\mu_0(\e) < \mu_-(\e)\,, \\
\mbox{if }\beta(\mu_0(\e),\e) <0 & \mbox{then }\ \mu_-(\e) <\mu_0(\e) < \mu_+(\e)\,, \\
\mbox{if }\beta(\mu_0(\e),\e) =0 & \mbox{then }\ \mu_0(\e)= \mu_+(\e) = \mu_-(\e)\,.
\end{cases}
\end{equation}
The graphs of these functions look like  in figure \ref{fig:instareg}. 
Thus $\mu_\wedge(\e) \leq \mu_0(\e) \leq \mu_\vee(\e) $ where
\begin{equation}\label{muminmumax}
\mu_\wedge (\e):= \min\big\{\mu_+ (\e),\mu_- (\e)\big\} \quad \text{and}\quad \mu_\vee (\e):= \max\big\{\mu_+ (\e),\mu_- (\e)\big\}\, ,
\end{equation}
and the inequalities are strict if and only if $\beta(\mu_0(\e),\e)\neq 0 $.
 \begin{figure}[h!!]
\centering
\includegraphics[width=8cm]{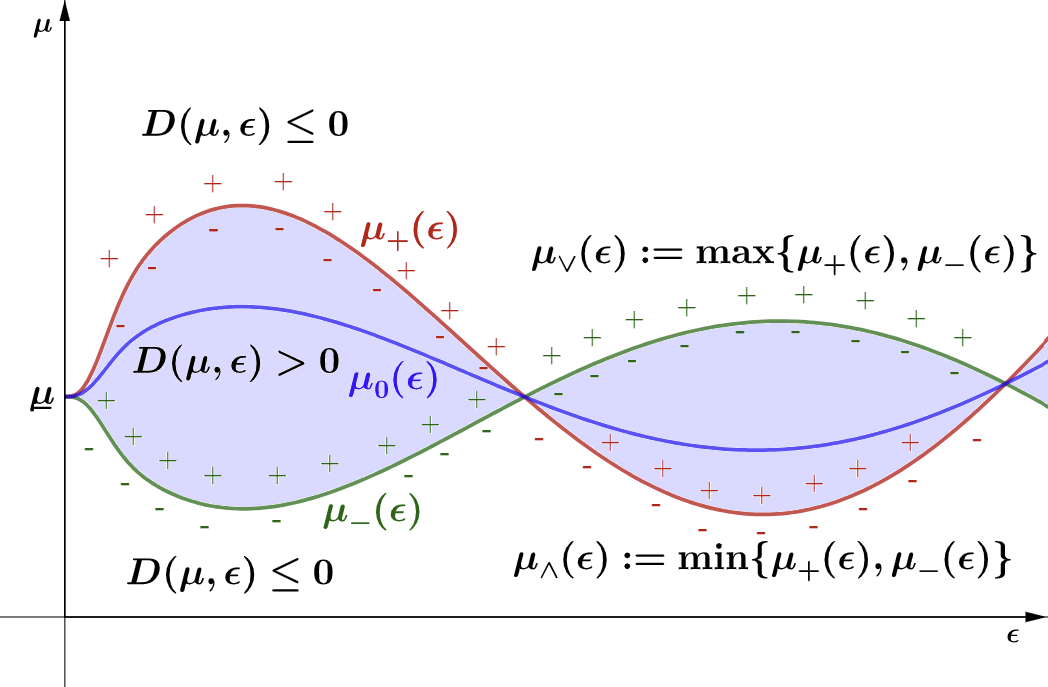}
\caption{ \label{fig:instareg} The instability region around the curve $\mu_0(\e)$  delimited by  the curves $\mu_\vee(\e)$ and $\mu_\wedge(\e)$.}
\end{figure}

 The following result provides a {\it necessary and sufficient} criterion for the existence
of eigenvalues of the matrix $ \tL (\mu,\e) $ 
with non-zero real part.

\begin{teo}\label{instacrit}
{\bf (Criterion of instability)}
Assume that the  $ 2 \times 2 $ Hamiltonian and reversible 
 matrix $\tL(\mu,\e)= \tJ \tB(\mu,\e)$ in \eqref{tocomputematrix1} 
 satisfies Assumption \ref{H}.  
 Then 
 \par \textrm{i)} the matrix $\tL(\mu,\e)$ has eigenvalues with nonzero real part, for $ (\mu,\e) $ close to
 $ (\umu,0) $,  if and only if the analytic function  
\begin{equation}\label{nonzeroan}
 (-\e_1,\e_1)\ni\e \mapsto \beta(\mu_0(\e),\e) \not\equiv 0 
\end{equation}
 is not identically zero, where $ \mu_0 (\e) $ is the analytic function
 defined  in \eqref{impfunc};  
equivalently 
\begin{equation}\label{wholeinstacrit}
 \exists \, n \in \bN \quad \text{ such that }\quad\frac{\;\;\de^n}{\de \e^n }\; \beta(\mu_0(\e),\e)_{| \e = 0 } \neq 0 \, . 
\end{equation}
\par \textrm{ii)} The spectrum of $ \, \tL(\mu,\e) $, for $(\mu,\e) $ close to $ ( \umu, 0 ) $, consists of two  eigenvalues $\lambda^\pm(\mu,\e) $ with opposite non-zero real part 
if and only if $(\mu,\e)$ lies inside 
the  region (see Figure \ref{fig:instareg}) 
\begin{equation}\label{regionR}
\begin{aligned}
R:&=\big\{ (\mu, \e) \in B_{\delta_0}(\umu)\times B_{\e_1}(0)  \ : \  
D(\mu,\e)=-d_+ (\mu,\e) \, d_- (\mu,\e)  > 0 \ \big\} \\ &= \big\{ 
(\mu, \e) \in B_{\delta_0}(\umu)\times B_{\e_1}(0)  \ : \    \mu_\wedge(\e)<\mu< \mu_\vee(\e)\big\} \, , \end{aligned}
\end{equation}
 whereas, for $(\mu,\e)\notin R $, the eigenvalues $\lambda^\pm(\mu,\e)$ are purely imaginary.
 \end{teo}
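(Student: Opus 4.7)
The plan is to derive the theorem directly from the formula \eqref{eigenvalues} for the eigenvalues $\lambda^\pm(\mu,\e)$, together with a careful sign analysis of the discriminant $D(\mu,\e)$ factored through the functions $d_\pm(\mu,\e)$ in \eqref{def-discri}. The logical order will be to establish (ii) first, then obtain (i) as an immediate consequence of the trichotomy \eqref{maxalternative}.

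First I would address statement (ii). Since $S(\mu,\e) = \gamma(\mu,\e) - \alpha(\mu,\e)$ is real analytic, the formula $\lambda^\pm = \tfrac{\im}{2}S \pm \tfrac{1}{2}\sqrt{D}$ yields that $\mathrm{Re}(\lambda^\pm) \neq 0$ if and only if $D(\mu,\e) > 0$, in which case $\mathrm{Re}(\lambda^\pm) = \pm \tfrac{1}{2}\sqrt{D}$ have opposite nonzero signs, whereas for $D \leq 0$ the square root is purely imaginary and both eigenvalues lie on the imaginary axis. Hence I only need to identify the set where $D > 0$. Using the factorization
\[
D(\mu,\e) = 4\beta^2(\mu,\e) - T^2(\mu,\e) = -\bigl(T(\mu,\e) - 2\beta(\mu,\e)\bigr)\bigl(T(\mu,\e) + 2\beta(\mu,\e)\bigr) = -d_+(\mu,\e)\, d_-(\mu,\e),
\]
this amounts to characterizing where $d_+ d_- < 0$.

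By Assumption \ref{H} and the expansion \eqref{expT} one has $T_1 = \alpha_1 + \gamma_1 > 0$, and since $\beta = O(\e^2)$ the derivatives $\partial_\mu d_\pm(\umu,0) = T_1 > 0$. Shrinking $\delta_0, \e_1$ if needed, each $d_\pm(\cdot,\e)$ is therefore strictly monotone increasing in $\mu$ on $B_{\delta_0}(\umu)$; combined with the defining identity $d_\pm(\mu_\pm(\e),\e) \equiv 0$ from \eqref{impfunc}, this gives $\sgn\bigl(d_\pm(\mu,\e)\bigr) = \sgn\bigl(\mu - \mu_\pm(\e)\bigr)$. Hence $d_+(\mu,\e)\, d_-(\mu,\e) < 0$ precisely when $\mu$ lies strictly between $\mu_+(\e)$ and $\mu_-(\e)$, i.e.\ when $\mu_\wedge(\e) < \mu < \mu_\vee(\e)$, which is exactly the region $R$ defined in \eqref{regionR}. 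This proves (ii).

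Statement (i) then follows directly from (ii) via the trichotomy \eqref{maxalternative}. Indeed, if the analytic function $\e \mapsto \beta(\mu_0(\e),\e)$ is not identically zero, then by analyticity there exist $\e$ arbitrarily close to $0$ with $\beta(\mu_0(\e),\e) \neq 0$, and the first two cases of \eqref{maxalternative} force $\mu_\wedge(\e) < \mu_\vee(\e)$; hence the region $R$ contains points $(\mu,\e)$ arbitrarily close to $(\umu,0)$, at which by (ii) the matrix $\tL(\mu,\e)$ has eigenvalues with nonzero real part. Conversely, if $\beta(\mu_0(\e),\e) \equiv 0$, the third case of \eqref{maxalternative} forces $\mu_\wedge(\e) = \mu_0(\e) = \mu_\vee(\e)$ for every $\e \in (-\e_1,\e_1)$, so $R$ is empty and (ii) yields that all eigenvalues of $\tL(\mu,\e)$ are purely imaginary for every $(\mu,\e)$ close to $(\umu,0)$. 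The equivalence between $\beta(\mu_0(\e),\e) \not\equiv 0$ and the existence of a nonzero Taylor coefficient \eqref{wholeinstacrit} is the identity theorem for real analytic functions of one variable. The main conceptual obstacle would have been the construction of $\mu_0, \mu_\pm$ and the establishment of the trichotomy \eqref{maxalternative}, but this preparatory work is carried out before the theorem is stated; what remains is the clean bookkeeping above.
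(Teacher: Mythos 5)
Your proof is correct and follows the same route as the paper: read off the sign of the discriminant $D = -d_+ d_-$ from the eigenvalue formula \eqref{eigenvalues}, characterize the region $D>0$ via the zero sets $\mu_\pm(\e)$ of $d_\pm$, and then reduce item (i) to the emptiness/non-emptiness of $R$ through the trichotomy \eqref{maxalternative}. You merely spell out the sign analysis of $d_\pm$ (the monotonicity in $\mu$ coming from $T_1>0$) that the paper records in the sentence preceding the theorem, so the substance is identical.
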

 \begin{proof}
The eigenvalues of 
$ \tL(\mu,\e )$ 
have the form  \eqref{eigenvalues}
and have nonzero real part if and only if  the discriminant 
$  D(\mu,\e) = 4 \beta^2 (\mu,\e) - T^2 (\mu,\e) = - d_+(\mu,\e)d_-(\mu,\e)  $
is positive. 
In view of \eqref{maxalternative} and \eqref{muminmumax}, we deduce item $ii$.
The region $R$ is not empty if and only if condition \eqref{nonzeroan} holds. This proves item $i$.
 \end{proof}

Let us now show sufficient conditions  to verify  \eqref{nonzeroan}.
In view of the expansions \eqref{expb} and \eqref{muexpa} we have 
\begin{equation}\label{beta(mu,e)}
\beta(\mu_0(\e),\e) = \beta_1 \e^2 + \Big(\beta_3 - \beta_2 \frac{T_2}{T_1} \Big)\e^4 + r(\e^5) \ .
\end{equation}
Hence the following conditions 
imply  the instability criterion \eqref{wholeinstacrit} :  
\begin{equation}\label{cases}
\begin{aligned}
&\text{1. {\bf (Non-degenerate case)}} \ \  \frac12 \frac{\;\de^2}{\de \e^2}\; \beta(\mu_0(\e),\e)_{| \e = 0 }  = \beta_1\neq 0\, ;\\
&\text{2. {\bf (First degenerate case)}} \ \ \beta_1=0\text{ and }
\frac{1}{4!} \frac{\;\de^4}{\de \e^4}\; \beta(\mu_0(\e),\e)_{| \e = 0 } = \beta_3-\frac{T_2}{T_1}\beta_2 \neq 0\, . 
\end{aligned}
\end{equation}
We shall prove in Section \ref{section:conti} that, for $p=2$ and considering the  deep water case,    
the coefficient $   \beta_1 $
in the expansion \eqref{matrixentries} of the matrix \eqref{tocomputematrix} 
vanishes, but  $ \beta_3-\frac{T_2}{T_1}\beta_2 \neq 0$, hence we are in the setup of the first degenerate case.  

In order to carefully describe  the  unstable eigenvalues, it is  convenient to 
translate the Floquet exponent 
$ \mu $ around the value $ \mu_0 (\e) $ where the  $ T(\mu_0(\e) ,\e)=0 $ vanishes, cfr. \eqref{impfunc},   namely 
we introduce the new parameter $\nu$  such that
\begin{equation}\label{def:nu}
\mu = \mu_0(\e) + \nu\, , \quad\text{i.e.} \quad \nu := \delta+\umu-\mu_0(\e)\, .
\end{equation}
Accordingly we write the functions   $\mu_\pm(\e)$ in \eqref{muexpa} as
$\mu_{\pm} (\e) = \mu_0(\e) + \nu_\pm(\e) $ 
where
\begin{equation}\label{expnuendsbis}
\nu_\pm(\e) := \mu_\pm(\e)-\mu_0(\e) \stackrel{\eqref{muexpa}}{=}  \mp  \frac{2 \beta_1}{T_1} \e^2 + r(\e^3) \, . 
\end{equation}
Along the proof of 
Theorem \ref{degenerateisola} we need the following expansion. 
\begin{lem}
If $\beta_1=0$, the functions $\nu_\pm(\e) $ in \eqref{expnuendsbis} admit the  expansion
\begin{equation}\label{expnuends}
\nu_\pm(\e) =  \mp \frac{2}{T_1} \left(\beta_3- \beta_2 \frac{T_2}{T_1} \right)\e^4 
+ r(\e^5) \, .
\end{equation}
\end{lem}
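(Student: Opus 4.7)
The plan is to read off the expansion \eqref{expnuends} directly from the two defining equations of $\mu_0$ and $\mu_\pm$. By \eqref{impfunc}, $\nu_\pm(\e) := \mu_\pm(\e) - \mu_0(\e)$ is the unique small analytic root of
$$ d_\pm(\mu_0(\e) + \nu, \e) \;=\; T(\mu_0(\e) + \nu, \e) \pm 2\beta(\mu_0(\e) + \nu, \e) \;=\; 0, $$
so the strategy is to Taylor-expand this equation in $\nu$ around $\nu = 0$ (i.e.\ around the zero curve $\mu = \mu_0(\e)$ of the trace) and extract $\nu_\pm(\e)$ to order $\e^4$. The main subtlety is only to keep track of which remainders are small enough; the existence and analyticity of $\nu_\pm$ are already given by \eqref{impfunc}.

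For the linear part, the identity $T(\mu_0(\e),\e) \equiv 0$ together with \eqref{expT} and the fact $\mu_0(\e) - \umu = O(\e^2)$ from \eqref{muexpa} gives
$$ T(\mu_0(\e) + \nu,\e) \;=\; \bigl[T_1 + r(\e^2)\bigr]\,\nu + r(\nu^2). $$
For the inhomogeneous part, the hypothesis $\beta_1 = 0$ substituted into \eqref{beta(mu,e)} yields
$\beta(\mu_0(\e),\e) = \bigl(\beta_3 - \beta_2 T_2/T_1\bigr)\e^4 + r(\e^5)$, while \eqref{expb} implies $\partial_\mu \beta(\mu_0(\e),\e) = \beta_2 \e^2 + r(\e^3)$, so
$$ 2\beta(\mu_0(\e) + \nu,\e) \;=\; 2\!\left(\beta_3 - \beta_2\frac{T_2}{T_1}\right)\e^4 + r(\e^5) + r(\e^2 \nu) + r(\nu^2). $$

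Plugging both expansions into $d_\pm(\mu_0(\e) + \nu_\pm(\e),\e) = 0$ and dividing by $T_1 + r(\e^2)$ (possible since $T_1 > 0$ by \eqref{alpha1gamma1}) produces
$$ \nu_\pm(\e) \;=\; \mp \frac{2}{T_1}\!\left(\beta_3 - \beta_2\frac{T_2}{T_1}\right)\e^4 + r(\e^5) + r(\e^2 \nu_\pm) + r(\nu_\pm^2). $$
A bootstrap argument then closes the proof: the $\e^4$ term forces $\nu_\pm(\e) = O(\e^4)$, hence $r(\e^2 \nu_\pm) = r(\e^6)$ and $r(\nu_\pm^2) = r(\e^8)$ are absorbed into $r(\e^5)$, giving \eqref{expnuends}. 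There is no genuine obstacle beyond careful bookkeeping of remainders; the content of the lemma is precisely that the degeneracy $\beta_1 = 0$ downgrades the vanishing rate of $\nu_\pm(\e)$ from the generic $\e^2$ in \eqref{expnuendsbis} to $\e^4$, with the new leading coefficient governed exactly by the subleading term of \eqref{beta(mu,e)}.
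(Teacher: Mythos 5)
Your proof is correct and follows essentially the same route as the paper's: expand the defining equation $d_\pm(\mu_0(\e)+\nu,\e)=0$ around $\mu=\mu_0(\e)$, isolate the linear-in-$\nu$ term $T_1\nu$, read off the $\e^4$ coefficient from $2\beta(\mu_0(\e),\e)$ via \eqref{beta(mu,e)}, and absorb remainders. The only cosmetic difference is that the paper invokes \eqref{expnuendsbis} with $\beta_1=0$ to get the a~priori bound $\nu_\pm=r(\e^3)$ (hence $r(\nu_\pm^2)=r(\e^6)$), whereas you reach the same conclusion by a short bootstrap; both are valid.
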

\begin{proof}
The function $\nu_+(\e)$ solves 
$$
d_+ ( \mu_0(\e) + \nu_+(\e),\e ) = 
T\big(\mu_0(\e) + \nu_+(\e),\e \big) + 2\beta\big(\mu_0(\e)+\nu_+(\e),\e\big) = 0\, .
$$
Expanding this identity at $\mu= \mu_0(\e)$ we have
$$
\begin{aligned}
&\underbrace{T\big(\mu_0(\e) ,\e \big)}_{=0\textup{ by \eqref{impfunc}}}+ \underbrace{(\pa_\mu T)\big(\mu_0(\e) ,\e \big) }_{= T_1 + r(\e^2)\textup{ by \eqref{expT} and  \eqref{muexpa}}}\nu_+(\e) +\underbrace{ r\big( \nu_+^2(\e)\big)}_{=r(\e^6) \textup{ by \eqref{expnuendsbis} with }\beta_1=0}\\
&+ \underbrace{2\beta\big(\mu_0(\e) ,\e \big)}_{=2 \big(\beta_3 - \beta_2 \frac{T_2}{T_1} \big)\e^4 + r(\e^5)  \textup{ by \eqref{beta(mu,e)}}}+ 2 \underbrace{(\pa_\mu \beta)\big(\mu_0(\e) ,\e \big)}_{= r(\e^2)  \textup{ by \eqref{expb}}} \nu_+(\e) = 0\, ,
\end{aligned}
$$
which gives \eqref{expnuends}.
Analogously one obtains the expansion of $\nu_-(\e)$.
\end{proof}
We define  
\begin{equation}\label{numpiu}
\nu_\wedge(\e) := \min\{\nu_+(\e),\,\nu_-(\e)\} \leq 0\, , \quad 
\nu_\vee(\e) := \max\{\nu_+(\e),\,\nu_-(\e)\} \geq 0 \, .
\end{equation}
Note that $\nu_\wedge(\e)$ and $\nu_\vee(\e) $ are the points where the discriminant $D(\mu_0(\e)+\nu,\e) $ in \eqref{discriminant} of the matrix $\tL(\mu_0(\e)+\nu,\e) $ vanishes and for $\nu_\wedge(\e) <\nu <\nu_\vee(\e)$ the discriminant $D(\mu_0(\e)+\nu,\e) $ is positive. 
We now describe the first degenerate case.
 \begin{teo}[\bf First degenerate case] \label{degenerateisola}
Assume \eqref{expa}-\eqref{expc}  and 
\begin{equation}\label{degeneratecondition}
\beta_1 = 0 \, ,\qquad \beta_3-\beta_2\frac{T_2}{T_1} \neq 0 \, ,
\end{equation}
where $T_1$ and $T_2$ are defined  in \eqref{expT}. 
 Then 
the matrix $\tL(\mu,\e) $  in \eqref{tocomputematrix1} possesses two  unstable
eigenvalues $\lambda^\pm(\mu_0(\e)+\nu,\e)$ for any  $\nu_\wedge(\e) <\nu < \nu_\vee(\e) $ where $ \nu_\wedge(\e)$, $  \nu_\vee(\e) $ are defined in
\eqref{numpiu}. 
The eigenvalues are 
\begin{equation}\label{eigs}
\! \lambda^\pm(\mu_0(\e)+\nu,\e) = \begin{cases}   \frac\im2 S(\mu_0(\e)+\nu,\e) \pm  \frac\im2 \sqrt{ |D(\mu_0(\e)+\nu,\e)|} & \text{if }  \nu \leq \nu_\wedge(\e)\textup{ or }\nu \geq \nu_\vee(\e)\, , \\[1mm]
  \frac\im2 S(\mu_0(\e)+\nu,\e) \pm \frac12 \sqrt{ D(\mu_0(\e)+\nu,\e)}&
   \text{if }\nu_\wedge(\e) <\nu <\nu_\vee(\e)\, , 
 \end{cases} 
\end{equation}
where $ \mu_0 (\e) $ is defined in \eqref{impfunc} and it has
 the form \eqref{muexpa},  
\begin{equation}\label{degenerateexpD}
\begin{aligned}  & D(\mu_0(\e)+\nu,\e) = \\ & 4\Big(\beta_3 - \beta_2 \frac{T_2}{T_1} \Big)^2 \e^8 -T_1^2 \nu^2+ 8 \beta_2 \Big(\beta_3 - \beta_2 \frac{T_2}{T_1}  \Big)\nu\e^6 +r(\e^{9},\nu\e^7,\nu^2\e^2,\nu^3)\, , 
\end{aligned}
\end{equation}
and 
\begin{equation}
\begin{aligned}
& S(\mu_0(\e)+\nu,\e) = \\ 
& 2\omega_*^{(p)} + (\gamma_1-\alpha_1)\nu +  \Big( \gamma_2-\alpha_2 - (\gamma_1-\alpha_1) \frac{T_2}{T_1} \Big) \e^2+ r(\e^3, \nu\e^2, \nu^2)  \, . \label{degenerateexpS}
\end{aligned}
\end{equation}
The maximum absolute value  of the real  part 
of the unstable
 eigenvalues in \eqref{eigs} is 
\begin{equation} \label{degenerateexpnuRe}
\max \text{Re}\, \lambda^\pm(\mu_0(\e)+\nu,\e) = \Big| \beta_3-\beta_2\frac{T_2}{T_1} \Big| \e^4 (1 + r(\e))\, .
\end{equation}
\par {\bf \emph{(Isola)}.} 
Assume in addition that the coefficients in \eqref{expa}-\eqref{expc}
satisfy $\alpha_1\neq \gamma_1$. 
 Then, for any $ \e$ small enough, the pair of unstable eigenvalues $\lambda^\pm(\mu_0(\e)+\nu,\e)$ depicts in the complex $\lambda$-plane, as $\nu $ varies
in the interval $ (\nu_\wedge(\e), \nu_\vee(\e)) $
a closed analytic  curve which
  intersects orthogonally the imaginary axis 
  and encircles a convex region. 
 \end{teo}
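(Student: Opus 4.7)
The plan is to reduce the whole statement to the general eigenvalue formula \eqref{eigenvalues}, namely $\lambda^\pm(\mu,\e) = \tfrac{\im}{2}S(\mu,\e) \pm \tfrac12 \sqrt{D(\mu,\e)}$, and then exploit careful Taylor expansions at $\mu = \mu_0(\e)$. The sign of $D$ has already been analyzed in Theorem \ref{instacrit}: $D(\mu,\e) > 0$ if and only if $\mu_\wedge(\e) < \mu < \mu_\vee(\e)$, equivalently $\nu_\wedge(\e) < \nu < \nu_\vee(\e)$ after the translation \eqref{def:nu}. This immediately yields the dichotomy in \eqref{eigs}: on the open interior one gets two eigenvalues with opposite nonzero real parts, while outside the interval $\sqrt{D} = \im\sqrt{|D|}$ gives two purely imaginary eigenvalues.

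To derive the explicit expansions \eqref{degenerateexpD}--\eqref{degenerateexpS} I would Taylor-expand $T$, $\beta$, $\gamma-\alpha$ around $\mu = \mu_0(\e)$. Using $T(\mu_0(\e),\e) \equiv 0$ from \eqref{impfunc}, together with $(\partial_\mu T)(\mu_0(\e),\e) = T_1 + r(\e^2)$ obtained by differentiating \eqref{expT} and invoking \eqref{muexpa}, one gets $T^2(\mu_0(\e)+\nu,\e) = T_1^2 \nu^2 + r(\nu^2\e^2,\nu^3)$. The degeneracy $\beta_1 = 0$ combined with \eqref{beta(mu,e)} gives $\beta(\mu_0(\e),\e) = (\beta_3 - \beta_2 T_2/T_1)\e^4 + r(\e^5)$, and $(\partial_\mu \beta)(\mu_0(\e),\e) = \beta_2 \e^2 + r(\e^3)$ from \eqref{expb}; squaring and substituting produces \eqref{degenerateexpD}. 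An analogous expansion of $S=\gamma-\alpha$ using \eqref{expa}, \eqref{expc}, \eqref{epsilonspento} and $\mu_0(\e) - \umu = -(T_2/T_1)\e^2 + r(\e^3)$ gives \eqref{degenerateexpS}. For the maximum real part \eqref{degenerateexpnuRe}, I would optimize the leading quadratic in $\nu$ of \eqref{degenerateexpD}: $\partial_\nu D = 0$ at $\nu^\star = (4\beta_2/T_1^2)(\beta_3 - \beta_2 T_2/T_1)\e^6 + r(\e^7)$, and inserting into \eqref{degenerateexpD} yields $D(\mu_0(\e)+\nu^\star,\e) = 4(\beta_3 - \beta_2 T_2/T_1)^2\e^8(1 + r(\e))$, giving \eqref{degenerateexpnuRe}.

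For the isola, I parameterize the unstable branch as $\nu \mapsto \lambda^+(\mu_0(\e)+\nu,\e)$ for $\nu \in [\nu_\wedge(\e),\nu_\vee(\e)]$, with $\lambda^-$ being its complex conjugate. Since $\nu_\pm(\e)$ are simple analytic zeros of $d_\pm$ (cf. \eqref{def-discri}--\eqref{expnuends}), $D$ has simple zeros at both endpoints; hence $\sqrt{D}$ exhibits a square-root branch of the form $c\sqrt{\nu_\vee - \nu}$ near $\nu_\vee$, so $\partial_\nu \text{Re}\,\lambda^+ \to \infty$ while $\partial_\nu \text{Im}\,\lambda^+$ stays bounded; the tangent to the curve in the $(x,y)$-plane becomes horizontal, i.e.\ orthogonal to the imaginary axis. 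For convexity, the added hypothesis $\alpha_1 \neq \gamma_1$ allows one to invert the relation $\text{Im}\,\lambda - \tfrac12 S(\mu_0(\e),\e) = \tfrac12(\gamma_1-\alpha_1)\nu + r(\nu\e^2,\nu^2)$ analytically, expressing $\nu$ as a function of $y - y_0(\e)$, and substituting into $x^2 = \tfrac14 D$ gives, at leading order, the strictly convex ellipse
\begin{equation*}
x^2 + \frac{T_1^2}{(\gamma_1-\alpha_1)^2}\bigl(y - y_0(\e)\bigr)^2 = \Bigl(\beta_3 - \beta_2\frac{T_2}{T_1}\Bigr)^{\!2}\e^8 \, .
\end{equation*}
The main obstacle is closing this argument rigorously: one has to show that the exact analytic curve, viewed as a small perturbation of this ellipse, retains strict convexity globally, including at the endpoints where the square-root dominates. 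I would handle this by proving that $y \mapsto x^2(y) = \tfrac14 D(\mu_0(\e)+\nu(y),\e)$ is strictly concave on the closed instability interval for all sufficiently small $\e$; this reduces to a uniform lower bound on the second derivative, which at leading order equals $T_1^2/(\gamma_1-\alpha_1)^2 > 0$ and is therefore preserved under the $O(\e)$ analytic perturbation.
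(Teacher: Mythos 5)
Your proposal is essentially correct and follows the same architecture as the paper's proof: invoking Theorem \ref{instacrit}, Taylor-expanding $T$, $\beta$, $S$ around $\mu=\mu_0(\e)$ with $T(\mu_0(\e),\e)\equiv0$ and $\beta_1=0$ to get \eqref{degenerateexpD}--\eqref{degenerateexpS}, optimizing the quadratic-in-$\nu$ for \eqref{degenerateexpnuRe}, using $\alpha_1\neq\gamma_1$ to make $\nu\mapsto\text{Im}\,\lambda^\pm$ a diffeomorphism, and exploiting the simple zero of $D$ at $\nu_\wedge,\nu_\vee$ for the vertical tangent blow-up.

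The one genuine variation is how you close the convexity argument. The paper differentiates $X(y,\e)=\tfrac12\sqrt{D(\mu(y,\e),\e)}$ twice and analyses the full quotient \eqref{concavesso}, showing its numerator is negative near the endpoints via \eqref{estimatesdiscri}--\eqref{convecavo}. You instead propose showing that $g(y):=x^2(y)=\tfrac14 D(\mu(y,\e),\e)$ is strictly concave, and then using the fact that $\sqrt{\cdot}$ is increasing and concave so it preserves (strict) concavity on positive arguments. This is a cleaner route: $g''=\tfrac14\big[(\pa_\mu^2 D)(\pa_y\mu)^2+(\pa_\mu D)(\pa_{yy}\mu)\big]$ is exactly the bracket the paper bounds in \eqref{convecavo}, but you avoid the $D^{3/2}$ denominator and the subtracted $(\pa_\mu D\,\pa_y\mu)^2$ term, so the estimate is simpler to state uniformly up to and including the endpoints (where $D\to0$). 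To make this fully rigorous one still needs the paper's ingredients: the uniform lower bound $|\pa_\nu I|\geq\tfrac14|\gamma_1-\alpha_1|$ (so $\pa_y\mu$ and $\pa_{yy}\mu$ are bounded), and the bounds on $\pa_\mu D,\pa_\mu^2 D$ from the analytic expansion \eqref{degenerateexpD}; you gesture at this but do not carry it out. Also note two small slips: the second derivative $\pa_{yy}g$ is negative (you wrote it is positive — presumably you mean $-\pa_{yy}g>0$), and its leading size is $2T_1^2/(\gamma_1-\alpha_1)^2$, not $T_1^2/(\gamma_1-\alpha_1)^2$. Neither affects the validity of the argument.
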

 \begin{proof} {\bf (Unstable eigenvalues).}
 The criterion of instability in Theorem \ref{instacrit} is satisfied in view of \eqref{beta(mu,e)} and \eqref{degeneratecondition}.
 By \eqref{eigenvalues}, \eqref{matrixentries} and \eqref{def:nu}, the eigenvalues of $ \tL(\mu,\e) $ 
 have the form \eqref{eigs}.    We now prove the expansion  \eqref{degenerateexpD} of the discriminant 
\begin{equation}\label{Dinte}
D(\mu_0(\e)+\nu,\e)= 4\beta^2(\mu_0(\e)+\nu,\e) - T^2(\mu_0(\e)+\nu,\e)\, .
\end{equation} 
By \eqref{impfunc} and \eqref{expT}  we  get that 
\begin{equation}\label{Tnuexp}
T(\mu_0(\e)+\nu,\e) = \partial_\mu T (\mu_0(\e),\e)\nu + r(\nu^2) = T_1\nu + r(\nu\e^2,\nu^2) \, .
\end{equation}
By  \eqref{expb} with 
 $ \beta_1 = 0 $,  \eqref{muexpa}, \eqref{def:nu} and \eqref{degeneratecondition},
 \begin{equation}\label{bnuexp}
\beta(\mu_0(\e)+\nu,\e) =  \Big(\beta_3- \beta_2 \frac{T_2}{T_1} \Big) \e^4 + \beta_2 \nu \e^2 + r(\e^5,\nu\e^3,\nu^2\e^2,\nu^3\e)\, .
\end{equation}
Then the expansion  \eqref{degenerateexpD}  follows by \eqref{Dinte} and taking the square of 
\eqref{Tnuexp} and \eqref{bnuexp}.
The expansion of $S(\mu_0(\e)+\nu,\e) $ in \eqref{degenerateexpS} follows from \eqref{traceB}, \eqref{expa}-\eqref{expc}, \eqref{expT} and \eqref{muexpa}.
 The absolute-value maximum of the real parts of the eigenvalues is attained at $\nu=\nu_{\text{Re}}$, with $\nu_{\text{Re}}$ such that $
 (\partial_\mu D) (\mu_0(\e)+\nu_{\text{Re}},\e) = 0$.
By \eqref{degenerateexpD} we have the expansion
\begin{equation}\label{numax}
\nu_{\text{Re}}(\e)  = 4\frac{\beta_2}{T_1^2} \Big(\beta_3-\beta_2 \frac{T_2}{T_1} \Big) \e^6 +r(\e^{7})\,.
\end{equation}
By plugging \eqref{numax} into \eqref{eigs}-\eqref{degenerateexpD} one obtains expansion
 \eqref{degenerateexpnuRe}.

{\bf (Isola).} In view of \eqref{eigs} 
for any fixed  $ \e $ 
small enough the unstable eigenvalues branch off from the imaginary axis at $\nu=\nu_\wedge(\e)$, evolve specularly  as $\nu$ increases and rejoin at $\nu=\nu_\vee(\e)$ thus forming a {\it closed} curve. With the  hypothesis $\alpha_1\neq \gamma_1$ 
the imaginary part of the eigenvalues
$ I(\nu,\e):=  \text{Im}\,\lambda^\pm(\mu_0(\e)+\nu,\e)  = \tfrac12 S\big(\mu_0(\e)+\nu,\e\big) $
 is monotone w.r.t. $\nu\in \big(\nu_\wedge(\e),\nu_\vee(\e)\big) $ because its derivative fulfills
 \begin{equation}
 \label{derivatanonvanishing}
 \pa_\nu I(\nu,\e)  \stackrel{\eqref{degenerateexpS}}{=} \frac{\gamma_1-\alpha_1}{2} + r(\e^2,\nu) \neq 0 \, , \qquad \nu_\wedge(\e) \leq \nu \leq \nu_{\vee}(\e)\, .
 \end{equation}
Thus the map $\nu \mapsto I(\nu,\e)  $ is a diffeomorphism between $\big(\nu_\wedge(\e),\nu_\vee(\e)\big)$ and its image $\big(y_\wedge(\e),y_\vee(\e)\big)$. 
Let us denote by $\nu(y,\e)$  the inverse of $y=I(\nu,\e)$, with $y$ varying in $y_\wedge(\e) < y < y_\vee(\e)$. 
The curves covered by  the two unstable eigenvalues in \eqref{eigs} in the complex plane are the two specular graphs on the imaginary axis
\begin{equation}\label{2graphs}
\begin{aligned}
& \Gamma_r := \big\{ \big( X(y,\e), y\big) \;:\; y_\wedge(\e) <y<y_\vee(\e) \big\}\, , \\
& \Gamma_l := \big\{ \big( -X(y,\e), y\big) \;:\; y_\wedge(\e) <y<y_\vee(\e) \big\}\, ,
\end{aligned}
\end{equation}
where 
\begin{equation}\label{Xdepy}
X(y,\e) := \tfrac12 \sqrt{D\big(\mu(y,\e), \e \big)}\, ,\quad \mu(y,\e):= \mu_0(\e) + \nu(y,\e)\, .
\end{equation}
At the bottom and top of the isola, i.e. at $y=y_\wedge(\e)$ and $y=y_\vee(\e)$, the real parts
 $\pm X(y,\e) $
  of the unstable eigenvalues vanish with derivative that tends to infinity. Indeed
  \begin{equation}\label{upblowing}
 \pa_y X(y,\e) \stackrel{\eqref{Xdepy}}{=}  \pa_y \tfrac12 \sqrt{D(\mu(y,\e),\e)}  =  \dfrac{ (\pa_\mu D)(\mu(y,\e),\e)}{4\sqrt{D(\mu(y,\e),\e)}} (\pa_y \mu)(y,\e) \, ,
 \end{equation}
and, by \eqref{degenerateexpD}, \eqref{expnuends} and \eqref{Xdepy}, we have 
$$
\begin{aligned}
& \lim_{y\to y_\wedge,\, y_\vee } (\pa_\mu D)(\mu(y,\e),\e) =
(\pa_\mu D) (\mu_0(\e)+\nu_\pm(\e),\e)=  
\pm 4\big(\beta_3 - \beta_2 \frac{T_2}{T_1} \big) T_1\e^4 + r(\e^5) \neq 0 \, ,  \\
& 
\lim_{y\to y_\wedge,\, y_\vee} (\pa_y \mu)(y,\e)  = \frac{1}{(\pa_\nu I)(\mu_\pm (\e),\e)} \stackrel{\eqref{derivatanonvanishing}}{\neq} 0 \, ,
 \end{aligned}\, ,
 $$
 and, since $ D(\mu (y,\e),\e) $ tends to 
$ 0 $ as $y \to y_\wedge(\e) $, $ y_\vee(\e)$, we deduce that 
 $|\pa_y X(y,\e)| $ in \eqref{upblowing} tends to $+\infty$. 

Finally, we claim that the region encircled by the two graphs \eqref{2graphs} is convex. It is sufficient to prove that $\pa_{yy} X(y,\e) $ is negative for any 
$y_\wedge(\e)<y<y_\vee (\e)$.  
Indeed, by \eqref{Xdepy}, 
\begin{align}
 \label{concavesso} 
 &\pa_{yy} X(y,\e) = \\
 &{\footnotesize \begin{matrix} \frac{2\big[ (\pa_{\mu}^2D) (\mu(y,\e),\e)( \pa_y \mu(y,\e))^2 + (\pa_\mu D) (\mu(y,\e),\e)\pa_y^2 \mu (y,\e) \big]D(\mu(y,\e),\e)-\big( \pa_\mu D(\mu(y,\e),\e) \pa_y \mu(y,\e)\big)^2}{8\big(D(\mu(y,\e),\e)\big)^{\frac32}} \end{matrix}} \, . \notag
\end{align} 
In view of \eqref{degenerateexpD} and \eqref{expnuends}  we have, for any  $y_\wedge(\e)<y<y_\vee (\e)$, 
\begin{align}\label{estimatesdiscri}
& \pa_\mu^2 D(\mu(y,\e),\e) \leq  - T_1^2  \, , \quad | \pa_\mu D(\mu(y,\e),\e)| \leq 8 \Big| T_1 \Big(\beta_3 - \beta_2 \frac{T_2}{T_1} \Big) \Big| \e^4  \, .
\end{align}
Moreover, by \eqref{derivatanonvanishing}, \eqref{eigs} and \eqref{degenerateexpS} there is $c>0$ such that,
for any 
$ \nu_\wedge(\e)<\nu <\nu_\vee(\e) $, 
$$
|\pa_\nu I(\nu,\e) | \geq \tfrac14 |\gamma_1-\alpha_1| \, , \qquad\quad |\pa_{\nu\nu} I(\nu,\e) | \leq c  \, ,
$$
and therefore, for some $C_1 >0$ and for any $ y_\wedge(\e) <y< y_\vee(\e)$,
\begin{equation}\label{inversionnuwrtyeps}
|\pa_y \mu(y,\e)| = |\pa_y \nu(y,\e)| \leq \frac{C_1}{|\gamma_1-\alpha_1|} \, , \quad |\pa_{yy} \mu(y,\e)| = |\pa_{yy} \nu(y,\e)| \leq \frac{C_1}{|\gamma_1-\alpha_1|} \,  \, .
\end{equation}
By  \eqref{inversionnuwrtyeps}  and \eqref{estimatesdiscri}, we have, for some $\tilde C>0$
\begin{equation}\label{convecavo}
\begin{aligned}
  \pa_\mu^2 D (\mu(y,\e),\e)( \pa_y \mu(y,\e))^2 & + \pa_\mu D(\mu(y,\e),\e)
 \pa_y^2 \mu (y,\e) \\
 & \leq -\frac{\tilde C T_1^2}{(\gamma_1-\alpha_1)^2} + \frac{\tilde C}{|\gamma_1-\alpha_1|} \e^4  < 0 
 \end{aligned}
 \end{equation}
 for $ \e $ small.
By \eqref{concavesso} and \eqref{convecavo}, the function $y\mapsto X(y,\e)$ is concave.
  \end{proof}

 A first approximation $\widetilde\lambda^\pm(\nu,\e) $ of  
 the  eigenvalues 
 $\lambda^\pm(\mu_0(\e)+\nu,\e)$ 
 of Lemma \ref{degenerateisola}, which 
neglects the remainders $r(\nu^3)$ of $D(\mu_0(\e)+\nu,\e)$ in \eqref{degenerateexpD} and $r(\nu^2)$ of $S(\mu_0(\e)+\nu,\e)$ in \eqref{degenerateexpS}, is
\begin{align}\label{truncation} \\[-6mm] \notag 
\begin{cases}
\begin{aligned} x &:= \text{Re}\ \widetilde\lambda^\pm(\nu,\e) \\[2mm]
& := {\footnotesize \begin{matrix} \pm \frac12 \sqrt{4\big(\beta_3 - \beta_2 \frac{T_2}{T_1} \big)^2 \e^8(1+r(\e)) -T_1^2 \nu^2(1+r(\e^2))+ 8 \beta_2 \big(\beta_3 - \beta_2 \frac{T_2}{T_1}  \big)\nu\e^6(1+r(\e))} \end{matrix} }\, ,
\end{aligned}
\\ y:= 
\text{Im}\ \widetilde\lambda^\pm(\nu,\e) :=\omega_*^{(p)} + \big(\frac{\gamma_2-\alpha_2}{2} - \frac{T_2(\gamma_1-\alpha_1)}{2T_1}\big) \e^2  (1+r(\e^2)) +\frac{\gamma_1-\alpha_1}{2}\nu (1+r(\e^2)) \, . 
\end{cases} 
\end{align} 
\normalsize
The functions $\widetilde\lambda^\pm(\nu,\e) $ are defined for $\nu$ in
 the interval $\widetilde\nu_\wedge(\e)\leq\nu\leq\widetilde\nu_\vee(\e) $ where the argument of the square root  in \eqref{truncation}  is non-negative. 
 These approximating eigenvalues describe an ellipse in the $(x,y)$-plane.

 \begin{lem} {\bf (Approximating ellipse)}\label{lem:appell} Suppose the coefficients $\alpha_1$ and $\gamma_1$ in \eqref{truncation} are different, i.e. $\gamma_1-\alpha_1\neq 0$. As $\nu$ varies between $\widetilde\nu_\wedge(\e)$ and $\widetilde\nu_\vee(\e)$
the approximating eigenvalues $ \widetilde\lambda^\pm(\nu,\e)  $ in 
\eqref{truncation}  
form an ellipse of equation
\begin{equation}\label{primeisolaeq}
x^2  + \dfrac{T_1^2(1+r(\e^2))}{(\gamma_1-\alpha_1)^2}(y-y_0(\e))^2 =  \big( \beta_3 - \beta_2 \frac{T_2}{T_1} \big)^2 \e^8(1+r(\e)) \, ,
\end{equation}
centered at $(0,y_0(\e))$,  
where $y_0(\e)$  is an  analytic function of the form 
\begin{equation}\label{ycentreisola1}
y_0(\e) = \omega_*^{(p)} + \Big(\dfrac{\gamma_2-\alpha_2}{2} - \dfrac{T_2(\gamma_1-\alpha_1)}{2T_1}\Big) \e^2  +r(\e^4)  \, .
\end{equation} 
\end{lem}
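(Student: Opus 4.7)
\medskip

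\noindent\textbf{Proof plan for Lemma \ref{lem:appell}.}
The strategy is to eliminate the parameter $\nu$ from the parametric expressions in \eqref{truncation} by completing the square in $\nu$ in the discriminant under the square root, and then using the affine relation between $y$ and $\nu$ (which is invertible precisely because $\gamma_1-\alpha_1\neq 0$) to obtain an equation purely in the variables $(x,y)$.

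\medskip

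\noindent\emph{Step 1: Completing the square.} Squaring the first line of \eqref{truncation} gives
\[
4x^2 \;=\; 4\Bigl(\beta_3-\beta_2\tfrac{T_2}{T_1}\Bigr)^{\!2}\!\e^8(1+r(\e)) \;-\; T_1^2\nu^2(1+r(\e^2)) \;+\; 8\beta_2\Bigl(\beta_3-\beta_2\tfrac{T_2}{T_1}\Bigr)\nu\e^6(1+r(\e)).
\]
The quadratic polynomial in $\nu$ on the right hand side has vertex at the shift
$\nu_*(\e) = \frac{4\beta_2(\beta_3-\beta_2 T_2/T_1)}{T_1^2}\e^6(1+r(\e))$,
of order $\e^6$. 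Completing the square produces a constant contribution of order $\e^{12}$, which is negligible against the leading $\e^8$ term and so is absorbed into the remainder $r(\e)$ multiplying $(\beta_3-\beta_2 T_2/T_1)^2\e^8$. Hence we obtain
\[
4x^2 + T_1^2(\nu-\nu_*(\e))^2(1+r(\e^2)) \;=\; 4\Bigl(\beta_3-\beta_2\tfrac{T_2}{T_1}\Bigr)^{\!2}\e^8(1+r(\e)).
\]

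\medskip

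\noindent\emph{Step 2: Inverting the $y$-equation.} Since $\gamma_1-\alpha_1\neq 0$, the second line of \eqref{truncation} reads
$y = \omega_*^{(p)} + A(\e)\e^2 + \tfrac{\gamma_1-\alpha_1}{2}\nu(1+r(\e^2))$
where $A(\e) := \bigl(\tfrac{\gamma_2-\alpha_2}{2}-\tfrac{T_2(\gamma_1-\alpha_1)}{2T_1}\bigr)(1+r(\e^2))$, and it is invertible in $\nu$ as a real-analytic relation. Define
\[
y_0(\e) := \omega_*^{(p)} + A(\e)\e^2 + \tfrac{\gamma_1-\alpha_1}{2}\nu_*(\e)(1+r(\e^2)),
\]
which, thanks to $\nu_*(\e)=O(\e^6)$, admits the expansion \eqref{ycentreisola1} because the $\nu_*(\e)$-contribution is of order $\e^6 \subset r(\e^4)$. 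Then
$\nu - \nu_*(\e) = \tfrac{2}{\gamma_1-\alpha_1}(y-y_0(\e))(1+r(\e^2))$,
and substituting into the squared-discriminant identity of Step 1 yields
\[
x^2 + \dfrac{T_1^2(1+r(\e^2))}{(\gamma_1-\alpha_1)^2}(y-y_0(\e))^2 \;=\; \Bigl(\beta_3-\beta_2\tfrac{T_2}{T_1}\Bigr)^{\!2}\e^8(1+r(\e)),
\]
which is precisely \eqref{primeisolaeq}. As $\nu$ runs over $[\widetilde\nu_\wedge(\e),\widetilde\nu_\vee(\e)]$, the pair $(x,y)$ sweeps the full locus determined by this equation, so the approximating eigenvalues trace the entire ellipse.

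\medskip

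\noindent\emph{Main difficulty.} The only delicate bookkeeping is in Step 1: one must verify that the remainder produced by completing the square (order $\e^{12}$) and the cross terms of the form $\nu\e^6 \cdot r(\e)$ are genuinely absorbable into the $(1+r(\e))$ factor multiplying the leading coefficient $(\beta_3-\beta_2 T_2/T_1)^2\e^8$, and likewise into the $(1+r(\e^2))$ factor in front of $(\nu-\nu_*(\e))^2$. This follows from the elementary fact that for $\nu$ in the interval $[\widetilde\nu_\wedge(\e),\widetilde\nu_\vee(\e)]$ the discriminant is nonnegative, which forces $|\nu|\lesssim \e^4$, so every $\nu$-dependent remainder is of higher order than the leading $\e^8$ term, and all estimates close.
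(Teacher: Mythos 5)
Your proposal is correct and uses essentially the same approach as the paper: you complete the square in $\nu$ in the squared discriminant before substituting the affine inverse of the $y$-equation, whereas the paper substitutes the inverse first and then reduces the resulting conic to canonical form, but these are the same computation in the opposite order and produce the same center $y_0(\e)$ and remainder bookkeeping. The observation that $|\nu|\lesssim\e^4$ on the interval $[\widetilde\nu_\wedge(\e),\widetilde\nu_\vee(\e)]$, which lets you absorb the cross-terms and the $O(\e^{12})$ shift into the stated remainders, is the right justification for the estimates closing.
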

\begin{proof}
We invert the second equation in \eqref{truncation} and obtain the function
\begin{equation}\label{expnuye} 
\nu(y,\e) = \dfrac{2}{\gamma_1-\alpha_1}(y-\tilde y(\e))(1+r(\e^2))
\end{equation}
where
$$
\tilde y(\e):=\text{Im}\ \tilde\lambda^\pm(0,\e)=\omega_*^{(p)} + \Big(\dfrac{\gamma_2-\alpha_2}{2} - \dfrac{T_2(\gamma_1-\alpha_1)}{2T_1}\Big) \e^2  (1+r(\e^2)) \,  . 
$$
By plugging the  expansion \eqref{expnuye} for $\nu=\nu(y,\e) $ in the equation for $x^2$, obtained by squaring the first line in \eqref{truncation}, we get the equation of a conic $0=e(x,y):=x^2 - [\text{Re}\; \tilde \lambda^\pm(\nu(y,\e),\e)]^2$, with
\begin{align*}
e(x,y) :&= x^2+  \dfrac{T_1^2}{(\gamma_1-\alpha_1)^2} (y-\tilde y(\e))^2(1+r(\e^2))\\  &  - 4 \dfrac{\beta_2\big( \beta_3 - \beta_2 \frac{T_2}{T_1} \big)}{\gamma_1-\alpha_1}(y-\tilde y(\e))\e^6(1+r(\e))- \Big( \beta_3 - \beta_2 \frac{T_2}{T_1} \Big)^2 \e^8(1+r(\e))\, . \notag
\end{align*}
Then one puts the conic into its canonical form \eqref{primeisolaeq} with $y_0(\e)-\tilde y(\e) = r(\e^6)$.
\end{proof}

\section{Taylor expansion of $\cB(\mu,\e) $, 
$P({\mu,\e})$ and  $\mathfrak{B}(\mu,\e)$}\label{sec:5}

In this section we provide  the Taylor  expansion of the operators
$\cB(\mu,\e) $ in \eqref{WW},
the projectors 
$P({\mu,\e})$
and  the operators $\mathfrak{B}(\mu,\e)$ defined in \eqref{Bgotico}
 around
$ (\umu,0) $. 

\smallskip
\noindent{\bf $\bullet$ Notation.}
For an operator $A=A(\mu,\e; x)$ we denote
\begin{subequations}\label{notazione}
\begin{equation}
A_{i,j} := A_{i,j}(\umu + \delta,\e; x) :=  \frac{1}{i!j!} \big(\pa^i_\mu\pa^j_\e A \big)(\umu,0;x)\, 
\delta^i \e^j ,\qquad  A_k :=  \sum_{\substack{i+j=k\\ i,j\geq 0}} A_{i,j} \, .\vspace{-2mm}
\end{equation}
We also denote
by $A_{i,j}^{[\kappa]} $ the part of the operator $A_{i,j}$
with Fourier harmonic $e^{\im \kappa x}$, i.e.
\begin{equation}
\begin{aligned}
&
A_{i,j}^{[\kappa]} 
:= \frac{e^{\im \kappa x}}{2\pi} \int_0^{2\pi} A_{i,j}(\umu + \delta,\e; y) e^{-\im \kappa y}\de y\,  , \quad
A_\ell^{[\kappa]} :=  \sum_{\substack{i+j= \ell \\ i,j\geq 0}} A_{i,j}^{[\kappa]}  \, .
\end{aligned}\vspace{-2mm}
\end{equation}
\end{subequations}
It results 
\begin{equation}\label{Pellk*}
[A_\ell^{[\kappa]}]^* = (A^*)_\ell^{[-\kappa]}\, .
\end{equation}
We shall occasionally split $A_{i,j}=A_{i,j}^{[\mathtt{ ev}]} + A_{i,j}^{[\mathtt{odd}]}$ where $A_{i,j}^{[\mathtt{ ev}]}$ is the part of the operator $A_{i,j}$ having only even harmonics, whereas $A_{i,j}^{[\mathtt{odd}]}$ is the part with only odd ones. \smallskip

 We denote by  $\cO(\delta^{m_1}\e^{n_1},\dots,\delta^{m_p}\e^{n_p})$, $ m_j, n_j \in \bN  $, 
analytic functions of $(\delta,\e)$ with values in a Banach space $X$ which satisfy the estimate
 $\|\cO(\delta^{m_j}\e^{n_j})\|_X \leq C \sum_{j = 1}^p |\delta|^{m_j}|\e|^{n_j}$, for some $ C > 0 $ and 
 for small values of $(\delta, \e)$. For any $ k\in\bN $ we denote  by $\cO_k$ an operator mapping $H^1(\bT,\bC^2)$  into $L^2(\bT,\bC^2)$-functions with  size $\e^k,\delta \e^{k-1} ,\dots, \delta^{k-1}\e$ or $\delta^k $.

We directly have the following expansion
recalling \eqref{pfunction}-\eqref{afunction} and  \eqref{grazieStrauss}.

\begin{lem}
The operator ${\cal B}  ({\mu, \e})  $ in 
\eqref{WW} expands as
\begin{equation*} 
{\cal B} ( \umu+\delta,\e ) = {\cal B}_0 + {\cal B}_1 + {\cal B}_2+ {\cal B}_3+{\cal B}_4+\cO_5 \, ,  
\end{equation*}
where 
\begin{subequations}\label{Bsani}
\begin{alignat}{3} 
&{\cal B}_0 &&\quad = \quad {\cal B}_{0,0} \ \ \;=
 &&\begingroup 
\setlength\arraycolsep{3pt}\begin{bmatrix} 1 & - \pa_x - \im \umu \\  \pa_x  + \im \umu&  |D+ \umu|\end{bmatrix}\endgroup \, ,\\
  \label{B1sano} 
  & {\cal B}_{1} &&=  {\cal B}_{0,1} +  {\cal B}_{1,0}=
 \e  &&  \begingroup 
\setlength\arraycolsep{-8pt} \begin{bmatrix} a_1(x) & -p_1(x)(\pa_x+\im  \umu) \\ (\pa_x+\im  \umu)\circ p_1(x) & 0 \end{bmatrix}\endgroup +\delta   \begingroup 
\setlength\arraycolsep{2pt} \begin{bmatrix} 0 & - \im  \\ \im  & \sgn^+(D) \end{bmatrix} \endgroup \, ,  \\
\label{B2sano}
 &{\cal B}_{2} &&=  {\cal B}_{0,2} +  {\cal B}_{1,1}=
 \e^2 &&\begingroup 
\setlength\arraycolsep{-8pt} \begin{bmatrix} a_2(x) & -p_2(x)(\pa_x+\im  \umu) \\ (\pa_x+\im  \umu)\circ p_2(x) & 0 \end{bmatrix} \endgroup + \delta\e  \begingroup 
\setlength\arraycolsep{-2pt} \begin{bmatrix} 0 & -\im p_1(x) \\ \im p_1(x) & 0 \end{bmatrix} \endgroup \,  , \\
 \label{B3sano}
  &{\cal B}_{3} &&=  {\cal B}_{0,3} +  {\cal B}_{1,2}=
 \e^3 &&\begingroup 
\setlength\arraycolsep{-8pt} \begin{bmatrix} a_3(x) & -p_3(x)(\pa_x+\im  \umu) \\ (\pa_x+\im  \umu)\circ p_3(x) & 0 \end{bmatrix}\endgroup + \delta\e^2  \begingroup 
\setlength\arraycolsep{-2pt} \begin{bmatrix} 0 & -\im p_2(x) \\ \im p_2(x) & 0 \end{bmatrix} \endgroup \,  ,\\
  \label{B4sano}
  &{\cal B}_{4} &&=  {\cal B}_{0,4} +  {\cal B}_{1,3}=
 \e^4 &&\begingroup 
\setlength\arraycolsep{-8pt} \begin{bmatrix} a_4(x) & -p_4(x)(\pa_x+\im  \umu) \\ (\pa_x+\im  \umu)\circ p_4(x) & 0  \end{bmatrix}\endgroup + \delta\e^3  \begingroup 
\setlength\arraycolsep{-2pt} \begin{bmatrix} 0 & -\im p_3(x) \\ \im p_3(x) & 0 \end{bmatrix} \endgroup \, ,
\end{alignat}
\end{subequations}
with $p_k (x)$ and $a_k (x)$, $k = 1,\dots,4$, in \eqref{pfunction}-\eqref{afunction}.
\end{lem}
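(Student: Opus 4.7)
The statement is a direct Taylor expansion of the entries of $\cB(\mu,\e)$ in \eqref{WW}, so the plan is simply to substitute the known expansions of $a_\e(x),p_\e(x)$ and of $|D+\mu|$, set $\mu=\underline{\mu}+\delta$, and collect by total order $i+j$ in $\delta^i\e^j$.

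The first observation is that $\cB(\mu,\e)$ is \emph{affine} in $\mu$. Indeed, by \eqref{grazieStrauss} one has $|D+\mu|=|D|+\mu\,\sgn^+(D)$ for $\mu>0$, and the off-diagonal entries contain $\mu$ only through the first-order factor $(\pa_x+\im\mu)=(\pa_x+\im\underline{\mu})+\im\delta$. The $(1,1)$ entry $1+a_\e(x)$ contains no $\mu$ at all. Consequently $\cB_{i,j}\equiv 0$ for every $i\geq 2$, which explains why at each total order $k\geq 1$ only the two pieces $\cB_{0,k}$ and $\cB_{1,k-1}$ survive, exactly as displayed in \eqref{Bsani}.

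Next I would treat each entry separately. Writing $\mu=\underline\mu+\delta$ and using \eqref{pfunction}--\eqref{afunction}:
\begin{itemize}
\item Entry $(1,1)$: $1+a_\e(x)=1+\sum_{n\geq 1}\e^n a_n(x)$ produces $\cB_{0,0}^{(1,1)}=1$ and $\cB_{0,j}^{(1,1)}=\e^j a_j(x)$ for $j\geq 1$; no $\delta$-terms.
\item Entry $(2,2)$: $|D+\mu|=|D+\underline\mu|+\delta\,\sgn^+(D)$ gives exactly $\cB_{0,0}^{(2,2)}=|D+\underline\mu|$ and $\cB_{1,0}^{(2,2)}=\delta\,\sgn^+(D)$, with nothing else.
\item Entry $(1,2)$: expand
\[
-(1+p_\e(x))\bigl[(\pa_x+\im\underline\mu)+\im\delta\bigr]=-(\pa_x+\im\underline\mu)-\im\delta-\sum_{n\geq 1}\e^n p_n(x)(\pa_x+\im\underline\mu)-\im\delta\sum_{n\geq 1}\e^n p_n(x),
\]
so that $\cB_{0,0}^{(1,2)}=-(\pa_x+\im\underline\mu)$, $\cB_{1,0}^{(1,2)}=-\im\delta$, $\cB_{0,j}^{(1,2)}=-\e^j p_j(x)(\pa_x+\im\underline\mu)$ and $\cB_{1,j-1}^{(1,2)}=-\im\delta\,\e^{j-1}p_{j-1}(x)$ for $j\geq 1$.
\item Entry $(2,1)$: identical computation with opposite signs, yielding the transpose pattern visible in \eqref{B1sano}--\eqref{B4sano}.
\end{itemize}
Assembling the four entries at each fixed total order $k\in\{0,1,2,3,4\}$ reproduces \eqref{Bsani} verbatim, and the unexpanded tail is controlled by the $\cO(\e^5)$ remainders in \eqref{apexp} (which are analytic in $\e$), giving the global $\cO_5$ bound.

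There is no real obstacle: the only point requiring a line of care is the use of \eqref{grazieStrauss}, which is what makes $\cB$ affine in $\mu$ and hence guarantees the vanishing of all $\cB_{i,j}$ with $i\geq 2$. Once that observation is made, the rest is bookkeeping of the Taylor coefficients $a_n(x),p_n(x)$ listed in \eqref{apexp}.
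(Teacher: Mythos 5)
Your proof is correct and is essentially the same direct bookkeeping the paper has in mind when it writes ``We directly have the following expansion'': you observe that $\cB(\mu,\e)$ is affine in $\mu$ (via \eqref{grazieStrauss}), so only $\cB_{0,j}$ and $\cB_{1,j-1}$ survive at each total order, and you then read off the Taylor coefficients from \eqref{apexp}. One small imprecision worth fixing: the $(2,1)$ entry $(\pa_x+\im\mu)\circ(1+p_\e(x))$ is not the $(1,2)$ computation ``with opposite signs''; since the derivative now acts on the product, the $\e^n$ piece is $(\pa_x+\im\umu)\circ p_n(x)$ rather than $-\bigl(-p_n(x)(\pa_x+\im\umu)\bigr)$ --- it is the formal adjoint of the $(1,2)$ entry, not merely its negative, consistent with the self-adjointness of $\cB$ and with the displayed formulas \eqref{B1sano}--\eqref{B4sano}.
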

Note that the functions $p_k (x)$ and $a_k (x)$ in \eqref{apexp} have only even (resp. odd) harmonics when $k$ is even (resp. odd). Consequently, with the notation introduced below \eqref{Pellk*}, we have
\begin{equation}\label{cB evodd}
\cB_{i,j}^{[\mathtt{ ev}]} = \begin{cases} \cB_{i,j} &\textup{if }j\textup{ is even}\, , \\ 
0 &\textup{if }j\textup{ is odd}\, , 
\end{cases} \qquad \cB_{i,j}^{[\mathtt{odd}]} = \begin{cases} 0 &\textup{if }j\textup{ is even}\, , \\ 
\cB_{i,j}  &\textup{if }j\textup{ is odd}\, . 
\end{cases} 
\end{equation}
We remark that sum and composition of operators satisfying  \eqref{cB evodd} still satisfy \eqref{cB evodd}. 

Analogously we expand the projectors $P({\mu,\e})$ in  \eqref{Pproj} as
\begin{equation*}
P ( \umu+\delta,\e) =P_0 + P_1 + P_2 + P_3 + \cO_4 \, 
\end{equation*}
where 
\begin{equation}\label{Psani}
\begin{aligned}
 &P_0 := P (\umu,0) \ , \quad P_1 := \mathcal{P} \big[\cB_1\big] \, , \quad P_2 := \mathcal{P} \big[\cB_2\big] + \mathcal{P} \big[\cB_1,\cB_1 \big] \, ,\\
  &P_3 := \mathcal{P} \big[\cB_3\big] + \mathcal{P} \big[\cB_2,\cB_1 \big] + \mathcal{P} \big[\cB_1,\cB_2 \big]  + \mathcal{P} \big[\cB_1,\cB_1,\cB_1 \big] \, ,
 \end{aligned} 
 \end{equation}
and, for any $k \in \bN $, 
\begin{equation}\label{hP}
 \begin{aligned} \mathcal{P} & \big[A_1,\dots, A_k \big] :=\\
 & \frac{(-1)^{k+1} }{2\pi \im } \oint_\Gamma (\cL_{ \umu,0}-\lambda)^{-1} \cJ A_1 (\cL_{ \umu,0}-\lambda)^{-1} \dots \cJ A_k (\cL_{ \umu,0}-\lambda)^{-1} \de\lambda \, ,
 \end{aligned}
\end{equation}
with $\Gamma$ the same circuit of Lemma \ref{lem:Kato1}.
In virtue of \eqref{Psani}-\eqref{hP} and \eqref{cB evodd} we obtain
\begin{equation}
\label{P evodd}
P_{i,j}^{[\mathtt{ ev}]} = \begin{cases} P_{i,j} &\textup{if }j\textup{ is even}\, , \\ 
0 &\textup{if }j\textup{ is odd}\, , 
\end{cases} \qquad P_{i,j}^{[\mathtt{odd}]} = \begin{cases} 0 &\textup{if }j\textup{ is even}\, , \\ 
P_{i,j}  &\textup{if }j\textup{ is odd}\, . 
\end{cases} 
\end{equation}

Now  we provide the expansion of the operators $\mathfrak{B} ({\mu,\e}) $.
Let   $\mathbf{Sym}[A]:= \frac12 A+ \frac12 A^*$.
\begin{lem}[{\bf Expansion of $\mathfrak{B} (\mu,\e) $}] \label{expansionthm}
 The operator $\mathfrak{B} (\mu,\e) $  in \eqref{Bgotico} has the Taylor expansion 
 \begin{equation*}
\mathfrak{B} ({\umu+\delta,\e}) =
\mathfrak{B}_0 +
\mathfrak{B}_1 +
\mathfrak{B}_2 +
\mathfrak{B}_3+ 
\mathfrak{B}_4 +\cO_5
\end{equation*}
where
 \begin{subequations}\label{upto4exp} 
\begin{align}\label{ordini012}
\mathfrak{B}_0 & := P_0^*{\cal B}_0 P_0 \, , 
 \quad 
 \mathfrak{B}_1 := P_0^*{\cal B}_1 P_0 \, , 
 \quad 
 \mathfrak{B}_2 := P_0^*\mathbf{Sym}[{\cal B}_2+{\cal B}_1P_1]  P_0\, , 
 \\  \label{ordine3}
 \mathfrak{B}_3 & := P_0^*\mathbf{Sym}[{\cal B}_3+{\cal B}_2P_1 + {\cal B}_1(\uno-P_0)P_2]P_0\, , 
 \\ \label{ordine4}
  \mathfrak{B}_4 &= P_0^*\mathbf{Sym}[{\cal B}_4+ {\cal B}_3P_1 + {\cal B}_2(\uno-P_0)P_2  + {\cal B}_1(\uno-P_0)P_3 -  {\cal B}_1P_1 P_0 P_2 ] P_0\, , 
\end{align}
\end{subequations}
with ${\cal B}_j$, $j=0,\dots,4$, in \eqref{Bsani} and $P_j$, $j=0,\dots,3$, in \eqref{Psani}.
\end{lem}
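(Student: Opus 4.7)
The plan is to expand $\mathfrak{B}(\mu,\e) = P_0^* \, U^*(\mu,\e) \, \cB(\mu,\e) \, U(\mu,\e) \, P_0$ directly in Taylor series around $(\umu,0)$, using the Kato formula \eqref{OperatorU} in the form $U(\mu,\e)P_0 = [\uno-(P(\mu,\e)-P_0)^2]^{-1/2} P(\mu,\e) P_0$. Setting $V(\mu,\e):=U(\mu,\e)P_0$ and $R(\mu,\e):=P(\mu,\e)-P_0 = P_1+P_2+P_3+P_4+\cO_5$, and using the scalar expansion $(1-x)^{-1/2}=1+\tfrac12 x+\tfrac38 x^2+\cdots$, I get explicitly
$$
V_0=P_0,\ \ V_1=P_1P_0,\ \ V_2=\big(P_2+\tfrac12 P_1^2\big)P_0,\ \ V_3=\big(P_3+\tfrac12(P_1P_2+P_2P_1)\big)P_0,
$$
and $V_4 = \big(P_4+\tfrac12(P_1P_3+P_3P_1+P_2^2)+\tfrac38 P_1^4\big)P_0$. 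Since $\mathfrak{B}=V^*\cB V$, the $n$-th Taylor coefficient is $\mathfrak{B}_n=\sum_{i+j+k=n} V_i^* \cB_j V_k$, and the task is to simplify this raw sum into the compact forms \eqref{ordini012}--\eqref{ordine4}.

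Two algebraic tools drive the simplification. First, from $P^2=P$ one reads order by order that $\sum_{i+j=n}P_iP_j=P_n$; in particular $P_0P_1P_0=0$, $P_1^2P_0=-P_0P_2P_0$, and more generally identities for $P_0P_kP_0$ in terms of lower-order $P_i$'s. Second, from $\cL_{\umu,0} P_0 = P_0 \cL_{\umu,0}$ together with the skew-Hamiltonian property $\cJ P_0 = P_0^*\cJ$ (proved in Lemma \ref{lem:Kato1}), and $\cL_{\umu,0}=\cJ\cB_0$, one gets the clean commutation
$
\cB_0 P_0 = P_0^* \cB_0 .
$
Using this, every term of the form $V_i^*\cB_0 V_k$ can be pushed around and, combined with the projector relations above, produces either a cancellation with the other $\cB_0$ contributions at the same order or an effective $(\uno-P_0)$ factor in front of a higher $P_k$. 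The appearance of $\mathbf{Sym}[\,\cdot\,]$ is automatic: since $\cB_j^*=\cB_j$, the sum $\mathfrak{B}_n = \sum V_i^*\cB_j V_k$ is self-adjoint and every term that is not already of the form $V_0^*(\cdot)V_0$ comes paired with its adjoint.

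At orders $0,1,2$ this bookkeeping is essentially immediate. For $\mathfrak{B}_1$ the only nontrivial step is noting that $P_0^*\cB_0 V_1 + V_1^*\cB_0 P_0 = P_0^*\cB_0 P_1 P_0 + P_0^* P_1^*\cB_0 P_0 = \cB_0 P_0 P_1 P_0 + (P_0 P_1 P_0)^*\cB_0 = 0$ by $P_0 P_1 P_0=0$, which leaves $\mathfrak{B}_1=P_0^*\cB_1 P_0$. For $\mathfrak{B}_2$ the $\cB_0$-contributions likewise combine (using $P_1^2P_0=-P_0P_2P_0$) to produce a cancellation, giving \eqref{ordini012}. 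For $\mathfrak{B}_3$ the same mechanism, together with the second-order relation $P_1 P_2 + P_2 P_1 = P_3 - P_0 P_3 - P_3 P_0$, collects the surviving pieces as $\cB_1(\uno-P_0)P_2$ in \eqref{ordine3}.

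The main obstacle will be $\mathfrak{B}_4$: there are fifteen terms $V_i^*\cB_j V_k$ to track, including the genuinely new $\tfrac38 P_1^4 P_0$ contribution from the $R^4$ correction to $(1-R^2)^{-1/2}$, and the symplectic structure of $U$ only enforces the cancellations implicitly. Concretely, I plan to expand $\mathfrak{B}_4$, then use $P^2=P$ at orders $2,3,4$ to rewrite every $P_0 P_k P_0$ in terms of lower-order products of $P_i$'s, apply $\cB_0 P_0 = P_0^*\cB_0$ to pull all $\cB_0$ factors next to $P_0$ or $P_0^*$ so that they collapse via projector identities, and finally reassemble the remaining $\cB_1,\cB_2,\cB_3,\cB_4$ contributions into the symmetric form \eqref{ordine4}. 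The somewhat unexpected term $-\cB_1 P_1 P_0 P_2$ in \eqref{ordine4} is what survives after all cancellations of the $\cB_0$-type and $\cB_1$-type terms involving $V_3$, $V_2$, $P_1^2$ and $P_1^4$ — checking that this precise term (and no other) remains is the bookkeeping heart of the proof.
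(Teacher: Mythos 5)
Your strategy---Taylor-expand $V := UP_0 = (\uno-R^2)^{-1/2}PP_0$ with $R := P - P_0$ and collect $\mathfrak{B}_n = \sum_{i+j+k=n} V_i^*\cB_j V_k$---is a valid ground-up rederivation, whereas the paper does not rederive the expansion at all: it cites Lemma~3.6 of \cite{BMV_ed} and only verifies that the single additional term present there vanishes in the current setting. If you do rederive, note that your $V_3$ and $V_4$ are already incomplete. Since $UP_0 = (\uno + R + \tfrac12 R^2 + \tfrac12 R^3 + \tfrac38 R^4 + \cdots)P_0$, the correct $V_3$ also contains the piece $\tfrac12 P_1^3P_0$ coming from $\tfrac12 R^3$, and the correct $V_4$ also contains $\tfrac12(P_1^2P_2+P_1P_2P_1+P_2P_1^2)P_0$; you kept only $(R)_n$, $(R^2)_n$ and the pure $(R^4)_4$ contributions, dropping the mixed ones. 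The $V_3$ omission happens to be invisible at order three because $P_0P_1^3P_0=0$, but both missing pieces feed into $\mathfrak{B}_4$ (through $V_0^*\cB_0 V_4$, $V_1^*\cB_0V_3$, $V_0^*\cB_1V_3$ and their adjoints), so \eqref{ordine4} would not come out of your $V_i$'s as written.

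The more essential gap concerns the identity used to tame the $\cB_0$ contributions. The commutation $\cB_0P_0=P_0^*\cB_0$ that you invoke---correctly derived from $[\cL_{\umu,0},P_0]=0$ and $\cJ P_0=P_0^*\cJ$---holds in every Kato-type setting, in particular in \cite{BMV_ed}, where the corresponding expansion of $\mathfrak{B}_4$ nevertheless retains the extra term $P_0^*\mathbf{Sym}[\mathfrak{N}P_0P_2]P_0$, $\mathfrak{N}:=\tfrac14(P_2^*\cB_0-\cB_0P_2)$. What makes this term vanish here, and what the paper's proof actually establishes, is the strictly stronger scalar identity $\cB_0P_0=P_0^*\cB_0=-\im\,\omega_*^{(p)}\cJ P_0$, i.e.\ $\cL_{\umu,0}$ acts on $\text{Rg}(P_0)$ as the scalar $\im\omega_*^{(p)}$ (no generalized eigenvector); combined with $\cJ P_j=P_j^*\cJ$ it yields $P_0^*\cB_0P_jP_0=P_0^*P_j^*\cB_0P_0$, whence $P_0^*\mathfrak{N}P_0=0$. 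If you push your bookkeeping through with only the weaker commutation, this extra term survives and you have no mechanism to remove it; you need the scalar form to close the argument.
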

\begin{proof}
The proof follows  as Lemma 3.6 of \cite{BMV_ed}, obtaining the same expansions (3.24a)--(3.24c) 
of \cite{BMV_ed}. In the present case the  last operator  in formula (3.24c) of \cite{BMV_ed}, namely $P_0^*\mathbf{Sym}[\mathfrak{N} P_0 P_2 ] P_0$, where  
$\mathfrak{N}:= \frac14\left(P_2^* \cB_0  - \cB_0 P_2  \right)$,   actually vanishes, in view of 
the identity $P_0^* \cB_0 P_2 P_0 = P_0^* P_2^* \cB_0 P_0 $ that we now prove. 
 First we have\footnote{This identity does not hold in \cite{BMV_ed} because of the presence of a generalized eigenvector.
 } 
 $\cB_0 P_0 = P_0^* \cB_0  = - \im \omega_*^{(p)} \cJ P_0 $, that follows from
$$
(\cB_0 P_0 f , g) = (\cL_{\umu,0} P_0 f, \cJ g) = \im \omega_*^{(p)} (P_0 f, \cJ g) = - \im \omega_*^{(p)} (\cJ P_0 f, g) \, .
$$
This identity, together with  $\cJ P_j = P_j^* \cJ$ (a consequence of $P(\mu,\e)$ being skew-Hamiltonian), gives
$P_0^* \cB_0 P_j P_0 = P_0^* P_j^* \cB_0 P_0 $ for any $j \in \bN$.
\end{proof}
In virtue of \eqref{notazione}, \eqref{cB evodd}, \eqref{P evodd} and \eqref{upto4exp} we obtain
\begin{equation}\label{kB evodd}
\kB_{i,j}^{[\mathtt{ ev}]} = \begin{cases} \kB_{i,j} &\textup{if }j\textup{ is even}\, , \\ 
0 &\textup{if }j\textup{ is odd}\, , 
\end{cases} \qquad \kB_{i,j}^{[\mathtt{odd}]} = \begin{cases} 0 &\textup{if }j\textup{ is even}\, , \\ 
\kB_{i,j}  &\textup{if }j\textup{ is odd}\, . 
\end{cases} 
\end{equation}

\section{Entanglement coefficients}\label{entanglementcoefficients}

In this section we introduce the {\em entanglement coefficients} that represent how the  jets of the operator   $\mathfrak{B}(\mu,\e)$  act on the unperturbed  eigenvector basis \eqref{def:fsigmaj}.

\subsection{Abstract representation formulas}

Take $ \und\mu \in \bR \setminus \bZ $ so that the eigenvectors
$\{f_j^\sigma\}_{j \in \bZ, \sigma  = \pm}$, $f_j^\sigma:=f_j^\sigma(\und \mu)$ in  \eqref{def:fsigmaj}, 
form a complex symplectic basis of $ L^2 (\bT,\bC^2)$.
Our goal is to describe the action of 
the  operators $\cJ \cB_\ell$ and   $ \mathcal{P}[\cB_{\ell_1}, \ldots, \cB_{\ell_s}]$ (recall  \eqref{hP}) on a  vector $f_j^\sigma$ of the basis.
To do so, we introduce  the {\it entanglement coefficients} 
\begin{equation}\label{entcoeff}
\ent{\ell }{\kappa }{j'}{j}{\sigma'}{\sigma}
:= \big({\cal B}_\ell^{[\kappa]} f_j^\sigma, f_{j'}^{\sigma'}  \big)   \, ,\qquad \ell \in \bN^2_0 \, ,\ j',j\in \bZ\, , \ \sigma,\sigma'=\pm\, ,
\end{equation}
where  $\cB_\ell^{[\kappa]}$ is the $\kappa$th Fourier coefficient of the operator $\cB_\ell$ in  \eqref{Bsani} (according to \eqref{notazione}).
We stress that in this section $\ell $ is always a pair $ \ell=(i,j)\in \bN_0^2$.

 The  entanglement coefficients fulfill
\begin{equation}\label{entprop}
\ent{\ell}{\kappa }{j'}{j}{\sigma'}{\sigma} = 0 \ \text{ if } \ j'\neq j+\kappa \, ,\quad\text{ and }\quad \bar{\ent{\ell}{\kappa }{j'}{j}{\sigma'}{\sigma}} =\ \ent{\ell}{-\kappa }{j}{j'}{\sigma}{\sigma'}\, .  
\end{equation} 
 The next lemma provides effective formulas to compute the action of the  operators $\cJ \cB_\ell$ and   $ \mathcal{P}[\cB_{\ell_1}, \ldots, \cB_{\ell_s}]$ on the vector basis.
\begin{lem}
\label{actionofL}
Let  $\ent{\ell}{\kappa }{j'}{j}{\sigma'}{\sigma}$ denote the entanglement coefficients in \eqref{entcoeff} and
$f_j^\sigma:=f_j^\sigma(\und \mu)$ in  \eqref{def:fsigmaj} with $ \umu \in \bR \setminus \bZ $.
  Then: \\
$(i)$ for any $\ell\in \bN_0^2$ and $j,\kappa\in \bZ$ and $\sigma =\pm $  one has
\begin{equation}\label{Lsacts}
\cJ {\cal B}_\ell^{[\kappa]} f_j^\sigma =
 \sum_{\sigma_1 = \pm} -\im \sigma_1  
 \, 
  \ent{\ell}{\kappa}{j+\kappa}{j}{\sigma_1}{\sigma}\; f_{j+\kappa}^{\sigma_1} 
=  \im \ent{\ell}{\kappa}{j+\kappa}{j}{-}{\sigma} \, f_{j+\kappa}^- -\im \ent{\ell}{\kappa}{j+\kappa}{j}{+}{\sigma}\,  f_{j+\kappa}^+ \, ;
\end{equation}
$(ii)$ for any $q\in \bN$, $ \ell_1,\dots,\ell_q\in \bN_0^2$, $j,\kappa_1,\dots,\kappa_q\in \bZ $ and $\sigma=\pm$, the operator 
$ \mathcal{P}[\cB_{\ell_q}^{[\kappa_q]},\dots,\cB_{\ell_1}^{[\kappa_1]}]  $
defined via \eqref{hP}, satisifies
\begin{align}
\label{hppar}
&\mathcal{P}[\cB_{\ell_q}^{[\kappa_q]},\dots,\cB_{\ell_1}^{[\kappa_1]}]  f_j^\sigma 
\\ \notag
&\quad =\!\!\!\!\!\!\!\!\!\! \sum_{\sigma_1,\dots,\sigma_q  = \pm }\!\!\!\!\!\!\!  \sigma_1\dots\sigma_q\,  
\ent{\ell_q}{\kappa_q}{j_q}{j_{q-1}}{\sigma_q}{\sigma_{q-1}}
\dots 
\;\ent{\ell_2}{\kappa_2}{j_2}{j_1}{\sigma_2}{\sigma_1} 
\ent{\ell_1}{\kappa_1}{j_1}{j}{\sigma_1}{\sigma} 
\;\Res{j, &j_1, &\dots, &j_q}{\sigma, &\sigma_1, &\dots, &\sigma_q} \; f_{j_q}^{\sigma_q} 
\end{align}
where $j_1 := j+\kappa_1$, $j_2 := j_1+\kappa_2$, \dots, $j_q=j_{q-1}+\kappa_q$,  and
\begin{equation}\label{generalresidue}
 \Res{j, &j_1, &\dots, &j_q}{\sigma, &\sigma_1, &\dots, &\sigma_q} := \dfrac{ (-\im\!)^q }{2\pi\im} \oint_\Gamma  \dfrac{ \de \lambda }{(\lambda-\im \omega_j^\sigma)(\lambda-\im \omega_{j_1}^{\sigma_1})\dots (\lambda-\im \omega_{j_q}^{\sigma_q}) }
 \end{equation}
with $\Gamma$ is a circuit winding once around $ \im \omega_*^{(p)}$ counterclockwise and $\omega_{j}^{\pm}:=\omega_{j}^{\pm}(\umu)$ in \eqref{omeghino}. 

The coefficients $\Res{j_0, &j_1, &\dots, &j_q}{\sigma_0, &\sigma_1, &\dots, &\sigma_q} $ are real and invariant by permutations of the indexes, namely
\begin{equation}\label{revres}
\begin{aligned}
 &\Res{j_0, &j_1, &\dots, &j_q}{\sigma_0, &\sigma_1, &\dots, &\sigma_q}  = \bar{ \Res{j_0, &j_1, &\dots, &j_q}{\sigma_0, &\sigma_1, &\dots, &\sigma_q} }\, ,\\
&  \Res{j_0, &j_1, &\dots, &j_q}{\sigma_0, &\sigma_1, &\dots, &\sigma_q} = \Res{j_{\tau(0)}, &j_{\tau(1)}, &\dots, &j_{\tau(q)}}{\sigma_{\tau(0)}, &\sigma_{\tau(1)}, &\dots, &\sigma_{\tau(q)}}\, ,
 \ \  \mbox{ for any  permutation } \tau \ \mbox{of } \{0,1,\ldots,q\}.
 \end{aligned}
\end{equation}
\smallskip
$(iii)$  for any $q\in \bN$, $ \ell_1,\dots,\ell_{q+1}\in \bN_0^2$, $j,j',\kappa_1,\dots,\kappa_{q+1}\in \bZ $ and $\sigma,\sigma'=\pm$, one has 
\begin{subequations}\label{cBactstot}
\begin{align}\label{cBactspar}
&\big( \cB_{\ell_{q+1}}^{[\kappa_{q+1}]}\mathcal{P}[\cB_{\ell_q}^{[\kappa_q]},\dots,\cB_{\ell_1}^{[\kappa_1]}]  f_j^\sigma , f_{j'}^{\sigma'}\big)  \\ \notag 
&\qquad =\!\!\!\!\!\!\!\!\!\! \sum_{\sigma_1,\dots,\sigma_q = \pm }\!\!\!\!\!\!\!  
\sigma_1\dots\sigma_q 
\ent{\ell_{q+1}}{\kappa_{q+1}}{j'}{j_q}{\sigma'}{\sigma_q} 
\ent{\ell_q}{\kappa_q}{j_q}{j_{q-1}}{\sigma_q}{\sigma_{q-1}} 
\dots
\ent{\ell_2}{\kappa_2}{j_2}{j_1}{\sigma_2}{\sigma_1} 
\ent{\ell_1}{\kappa_1}{j_1}{j}{\sigma_1}{\sigma} 
\Res{j, &j_1, &\dots, &j_q}{\sigma, &\sigma_1, &\dots, &\sigma_q}  
\end{align}
with $j_1 := j+\kappa_1$, $j_2 := j_1+\kappa_2$, \dots, $j_q=j_{q-1}+\kappa_q$, and
\begin{align}
 \label{cBactsparbis}
&\big( \cB_{\ell_1}^{[\kappa_1]}  f_{j}^{\sigma} , \mathcal{P}[\cB_{\ell_2}^{[-\kappa_2]},\dots,\cB_{\ell_{q+1}}^{[-\kappa_{q+1}]}] f_{j'}^{\sigma'} \big)  \\ \notag &\qquad =\!\!\!\!\!\!\!\! \sum_{\sigma_1,\dots,\sigma_q = \pm }\!\!\!\!\!\!\!  \sigma_1\dots\sigma_q \, \ent{\ell_{q+1}}{\kappa_{q+1}}{j'}{\xi_q}{\sigma'}{\sigma_q} \ent{\ell_{q}}{\kappa_{q}}{\xi_q}{\xi_{q-1}}{\sigma_q}{\sigma_{q-1}} \dots \ent{\ell_2}{\kappa_2}{\xi_{2}}{\xi_1}{\sigma_{2}}{\sigma_1} \ent{\ell_1}{\kappa_1}{\xi_1}{j}{\sigma_1}{\sigma} \Res{j', &\xi_q, &\dots, &\xi_1}{\sigma', &\sigma_q, &\dots, &\sigma_1}  
\end{align}
\end{subequations}
with $\xi_q := j'-\kappa_{q+1}$, $\xi_{q-1} := \xi_q-\kappa_{q}$, \dots, $\xi_{1} := \xi_2-\kappa_2$.
\end{lem}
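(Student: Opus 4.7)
The proof is essentially a bookkeeping exercise resting on the fact that $\{f_j^\sigma\}_{j\in\bZ,\sigma=\pm}$ in \eqref{def:fsigmaj} is a complex symplectic basis of $L^2(\bT,\bC^2)$. For part (i), I would start by noting that \eqref{sympbas} reads $\mathcal{W}_c(f_j^\sigma,f_{j'}^{\sigma'}) = -\im\sigma\,\delta_{j,j'}\delta_{\sigma,\sigma'}$, so that every $v\in L^2(\bT,\bC^2)$ is reconstructed as
\begin{equation*}
v=\sum_{j',\sigma'}\im\sigma'\,\mathcal{W}_c(v,f_{j'}^{\sigma'})\,f_{j'}^{\sigma'}
=\sum_{j',\sigma'}\im\sigma'\,(\cJ v,f_{j'}^{\sigma'})\,f_{j'}^{\sigma'}.
\end{equation*}
Applying this to $v=\cJ\cB_\ell^{[\kappa]}f_j^\sigma$ and using $\cJ^2=-\uno$ gives the coefficient $-\im\sigma'(\cB_\ell^{[\kappa]}f_j^\sigma,f_{j'}^{\sigma'})=-\im\sigma'\,\ent{\ell}{\kappa}{j'}{j}{\sigma'}{\sigma}$; Fourier orthogonality (the first property in \eqref{entprop}) forces $j'=j+\kappa$, yielding \eqref{Lsacts}.

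For part (ii) I would read \eqref{hP} from right to left, exploiting that the unperturbed resolvent is diagonal on the basis, $(\cL_{\umu,0}-\lambda)^{-1}f_j^\sigma=(\im\omega_j^\sigma-\lambda)^{-1}f_j^\sigma$. Starting at $f_j^\sigma$ with the rightmost resolvent, each block $(\cL_{\umu,0}-\lambda)^{-1}\cJ\cB_{\ell_s}^{[\kappa_s]}$ applied in succession contributes, by (i), a Fourier shift $j_{s-1}\mapsto j_s=j_{s-1}+\kappa_s$, a sum over $\sigma_s=\pm$ with weight $-\im\sigma_s\,\ent{\ell_s}{\kappa_s}{j_s}{j_{s-1}}{\sigma_s}{\sigma_{s-1}}$, and an extra denominator $(\im\omega_{j_s}^{\sigma_s}-\lambda)^{-1}$. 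Combining the resulting $(-\im)^q$ with the prefactor $\frac{(-1)^{q+1}}{2\pi\im}$ and flipping $(\im\omega-\lambda)\leadsto -(\lambda-\im\omega)$ in every factor produces exactly the scalar $\Res{j,&j_1,&\dots,&j_q}{\sigma,&\sigma_1,&\dots,&\sigma_q}$ of \eqref{generalresidue}, giving \eqref{hppar}. The two properties \eqref{revres} follow by computing the contour integral through residues: only the poles $\lambda=\im\omega_{j_s}^{\sigma_s}$ equal to $\im\omega_*^{(p)}$ lie inside $\Gamma$, each residue is $\prod_{k\neq s}(\im\omega_{j_s}^{\sigma_s}-\im\omega_{j_k}^{\sigma_k})^{-1}$, and the $(\im)^{-q}$ from these factors cancels the $(-\im)^q$ up to $(-1)^q$, leaving a real rational function symmetric in the $(\omega_{j_s}^{\sigma_s})$'s.

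For part (iii), formula \eqref{cBactspar} is immediate from (ii): apply \eqref{hppar} to $f_j^\sigma$, then compose with $\cB_{\ell_{q+1}}^{[\kappa_{q+1}]}$ and pair against $f_{j'}^{\sigma'}$, which simply tacks on the last entanglement factor $\ent{\ell_{q+1}}{\kappa_{q+1}}{j'}{j_q}{\sigma'}{\sigma_q}$. For \eqref{cBactsparbis} I would instead apply (ii) to $\mathcal{P}[\cB_{\ell_2}^{[-\kappa_2]},\dots,\cB_{\ell_{q+1}}^{[-\kappa_{q+1}]}]f_{j'}^{\sigma'}$ and then take the sesquilinear pairing with $\cB_{\ell_1}^{[\kappa_1]}f_j^\sigma$. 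The backward Fourier shifts $\xi_q=j'-\kappa_{q+1}$, $\xi_{s-1}=\xi_s-\kappa_s$ come from the minus signs inside the Fourier projectors, and the sesquilinearity generates complex conjugates of all entanglement coefficients and of the residue. The reality of $\Res$ established in (ii), together with the conjugation identity $\overline{\ent{\ell}{\kappa}{j'}{j}{\sigma'}{\sigma}}=\ent{\ell}{-\kappa}{j}{j'}{\sigma}{\sigma'}$ of \eqref{entprop} (a direct consequence of the self-adjointness $(\cB_\ell^{[\kappa]})^*=\cB_\ell^{[-\kappa]}$ via \eqref{Pellk*}), reabsorbs every conjugate into an entanglement coefficient with the correct sign of $\kappa$ and swapped upper/lower indices, producing exactly \eqref{cBactsparbis}.

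No analytic estimate is involved; the only non-trivial step is keeping track of conventions, namely the rightmost-first order in the bracket $\mathcal{P}[\,\cdot\,,\ldots,\,\cdot\,]$, the book-keeping of the signs $\sigma_s$ and the $\im$-powers, and, in (iii), matching the direction of the Fourier shifts $j_s$ versus $\xi_s$ with the corresponding order of entanglement coefficients. The subtlest conceptual point is the reality of \eqref{generalresidue}, which is what permits the conjugation argument in \eqref{cBactsparbis} to close on itself.
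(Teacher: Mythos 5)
Your proposal is correct and follows essentially the same route as the paper's proof: decompose in the complex symplectic basis for (i), iterate the diagonal action of the resolvent for (ii), and for (iii)(b) push the Fourier shifts through the sesquilinear pairing using the conjugation identity in \eqref{entprop} together with the reality of the residue coefficients. The only place where you diverge is the proof of the reality claim in \eqref{revres}: you compute residues directly and write the residue at a pole $\lambda=\im\omega_{j_s}^{\sigma_s}=\im\omega_*^{(p)}$ as $\prod_{k\neq s}(\im\omega_{j_s}^{\sigma_s}-\im\omega_{j_k}^{\sigma_k})^{-1}$, but this is the simple-pole formula; in the paper multiple indices may satisfy $\omega_{j_s}^{\sigma_s}=\omega_*^{(p)}$ simultaneously (this occurs throughout Section 5), and then the residue involves derivatives. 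The conclusion still holds (the residue of a rational function with real poles, evaluated on a contour enclosing real $z=\lambda/\im$, is real), but as written your argument only covers the generic case. The paper sidesteps this by a contour-conjugation argument, parametrizing $\Gamma$ by a path $\gamma(t)$ and observing that $-\bar{\gamma}(t)$ winds around $\im\omega_*^{(p)}$ in the reverse orientation, which gives reality uniformly for all pole multiplicities; you may want to either adopt that argument or add a sentence handling the multiple-pole case (e.g.\ via the substitution $\lambda=\im z$, after which the integrand becomes a real rational function on a contour around the real point $\omega_*^{(p)}$).
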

\begin{proof}
$(i)$ 
Since the operator ${\cal B}_\ell^{[\kappa]}$ shifts the harmonic $j$ to $j+\kappa$ and $\cJ$ leaves it invariant, there are 
scalars $ \alpha^\pm
\in \bC $ such that  
\begin{equation}\label{Lsactspar}
\cJ {\cal B}_\ell^{[\kappa]} f_j^\sigma = \alpha^- f_{j+\kappa}^- + \alpha^+  f_{j+\kappa}^+ \, .
\end{equation} 
Then 
\begin{equation}\label{troppocaldo}
\big( \cJ {\cal B}_\ell^{[\kappa]} f_j^\sigma, \cJ f_{j+\kappa}^- \big) = \alpha^- \big(  f_{j+\kappa}^-, \cJ f_{j+\kappa}^- \big) + \alpha^+ \big(  f_{j+\kappa}^+, \cJ f_{j+\kappa}^- \big) \stackrel{\eqref{sympbas}}{=} -\im \alpha^- \, . 
\end{equation}
 On the other hand 
\begin{equation}\label{zusivviu}
\big( \cJ {\cal B}_\ell^{[\kappa]} f_j^\sigma, \cJ f_{j+\kappa}^- \big) = \big({\cal B}_\ell^{[\kappa]} f_j^\sigma, f_{j+\kappa}^- \big) \stackrel{\eqref{entcoeff}}{=} \ent{\ell}{\kappa}{j+\kappa}{j}{-}{\sigma}\, .
\end{equation}
Similarly $ \ent{\ell}{\kappa}{j+\kappa}{j}{+}{\sigma} =  \big( \cJ {\cal B}_\ell^{[\kappa]} f_j^\sigma, \cJ f_{j+\kappa}^+ \big) =  \im \alpha^+ $. By  \eqref{Lsactspar}, \eqref{troppocaldo} and \eqref{zusivviu} we get \eqref{Lsacts}.

$(ii)$ By \eqref{def:fsigmaj} we have $(\cL_{\umu,0}-\lambda)^{-1} f_j^\sigma = \frac{1}{\im\omega_j^\sigma-\lambda} f_j^\sigma$. Thus, in view of \eqref{hP},
$$
\mathcal{P}[\cB_{\ell_q}^{[\kappa_q]},\dots,\cB_{\ell_1}^{[\kappa_1]}]  f_j^\sigma = \frac{(-1)^{q+1}}{2\pi\im} \oint_\Gamma  \frac{(\cL_{\umu,0}-\lambda)^{-1}}{\im \omega_j^\sigma -\lambda} \cJ {\cal B}_{\ell_q}^{[\kappa_q]} \dots (\cL_{\umu,0}-\lambda)^{-1}\cJ {\cal B}_{\ell_1}^{[\kappa_1]} f_j^\sigma  \de \lambda\,  \ . 
$$
Now it is enough to use repeatedly the formula above, setting $j_0:=j$ and $\sigma_0:=\sigma $, 
$$
(\cL_{\umu,0}-\lambda)^{-1}\cJ {\cal B}_{\ell_\iota}^{[\kappa_\iota]} f_{j_{\iota-1}}^{\sigma_{\iota-1}}  = \sum_{\sigma_\iota = \pm} -\im \sigma_\iota  \frac{ \ent{\ell_\iota}{\kappa_\iota}{j_\iota}{j_{\iota-1}}{\sigma_\iota}{\sigma_{\iota-1}} }{\im \omega_{j_\iota}^{\sigma_\iota}-\lambda} f_{j_\iota}^{\sigma_\iota} \ \ , \qquad j_\iota := j_{\iota-1} + \kappa_\iota \ ,\quad \iota=1,\dots,q\, ,
$$
(which follows from  \eqref{Lsacts}) to obtain \eqref{hppar}.

Next we prove properties  \eqref{revres}. The second one follows trivially from the definition  \eqref{generalresidue}. For the first one we have, by \eqref{generalresidue}, 
$$ 
\bar{\Res{j_0, &j_1, &\dots, &j_q}{\sigma_0, &\sigma_1, &\dots, &\sigma_q} } =  \frac{\im\!^q}{2\pi\im} \int_0^{2\pi} \dfrac{-\bar{\gamma'}(t)\de t}{(\bar{\gamma}(t)+\im \omega_{j_0}^{\sigma_0})\ldots (\bar{\gamma}(t)+\im \omega_{j_q}^{\sigma_q})}\, ,
$$
for a closed path
$\gamma(t)$ winding around $\im \omega_*^{(p)}$ counterclockwise. Thus, being $-\bar{\gamma}(t)$ reverse oriented, 
$$ 
\bar{\Res{j_0, &j_1, &\dots, &j_q}{\sigma_0, &\sigma_1, &\dots, &\sigma_q} } = -\frac{\im\!^q}{2\pi\im} \oint_\Gamma \dfrac{\de\lambda}{(\im \omega_{j_0}^{\sigma_0}-\lambda)\ldots (\im \omega_{j_q}^{\sigma_q}-\lambda)}  =  \Res{j_0, &j_1, &\dots, &j_q}{\sigma_0, &\sigma_1, &\dots, &\sigma_q} ,
 $$
whence the first identity in \eqref{revres} follows.\\
$(iii)$ Identity \eqref{cBactspar} is a consequence of \eqref{hppar} and \eqref{entcoeff}. By a similar argument one proves \eqref{cBactsparbis}, using also \eqref{entprop} and \eqref{revres}. 
\end{proof}

The following identities will be particularly useful to  identify expressions of the form \eqref{cBactspar}--\eqref{cBactsparbis} that coincide.

\begin{lem}
Let $(j,\sigma)\, ,\;(j',\sigma')$ be such that $\omega_j^\sigma = \omega_{j'}^{\sigma'}$. 
Then
\begin{equation}\label{psmirabilis}
\big( \cB_{\ell_{q+1}}^{[\kappa_{q+1}]}\mathcal{P}[\cB_{\ell_q}^{[\kappa_q]},\dots,\cB_{\ell_1}^{[\kappa_1]}]  f_j^\sigma , f_{j'}^{\sigma'}\big) = \big( \cB_{\ell_1}^{[\kappa_1]}  f_{j}^{\sigma} , \mathcal{P}[\cB_{\ell_2}^{[-\kappa_2]},\dots,\cB_{\ell_{q+1}}^{[-\kappa_{q+1}]}] f_{j'}^{\sigma'} \big) \, .
\end{equation}
\end{lem}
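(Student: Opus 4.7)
The plan is to compute both sides of \eqref{psmirabilis} by applying the explicit formulas \eqref{cBactspar}--\eqref{cBactsparbis} of Lemma \ref{actionofL}, and then verify that the two resulting multiple sums coincide term-by-term. This should be a straightforward bookkeeping identity: the only substantive input is the hypothesis $\omega_j^\sigma = \omega_{j'}^{\sigma'}$, which is needed to identify the two residue factors.

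First I would expand the left-hand side of \eqref{psmirabilis} via \eqref{cBactspar} as a sum over intermediate signs $\sigma_1,\dots,\sigma_q \in \{\pm\}$ of products of entanglement coefficients along the forward chain $j_0:=j$, $j_i := j_{i-1}+\kappa_i$, weighted by the residue $\Res{j,\,j_1,\dots,j_q}{\sigma,\,\sigma_1,\dots,\sigma_q}$. Analogously, applying \eqref{cBactsparbis} to the right-hand side yields a sum over the same signs along the backward chain $\xi_q := j' - \kappa_{q+1}$, $\xi_{i-1}:= \xi_i - \kappa_i$, weighted by the residue $\Res{j',\,\xi_q,\dots,\xi_1}{\sigma',\,\sigma_q,\dots,\sigma_1}$.

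Next I would check that the two sums are supported on the same set of indices and that the entanglement products match termwise. By the vanishing property in \eqref{entprop}, a nonzero contribution on the left-hand side forces the telescoping relation $j' = j + \kappa_1 + \dots + \kappa_{q+1}$, and the same relation arises on the right-hand side. Under this constraint, a direct induction gives $\xi_i = j + \kappa_1 + \dots + \kappa_i = j_i$ for every $i=1,\dots,q$, so the entanglement factors appearing in the two sums coincide one-to-one.

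It then remains to match the residues. This is where the hypothesis $\omega_j^\sigma = \omega_{j'}^{\sigma'}$ is used: in the defining contour integral \eqref{generalresidue} the pole factor $(\lambda - \im\omega_j^\sigma)$ equals $(\lambda - \im\omega_{j'}^{\sigma'})$, so
\[
\Res{j,\,j_1,\dots,j_q}{\sigma,\,\sigma_1,\dots,\sigma_q}
= \Res{j',\,j_1,\dots,j_q}{\sigma',\,\sigma_1,\dots,\sigma_q}.
\]
The permutation invariance in \eqref{revres}, applied to the permutation that reverses $(j_1,\dots,j_q)$ and keeps $j'$ in front, then yields
\[
\Res{j',\,j_1,\dots,j_q}{\sigma',\,\sigma_1,\dots,\sigma_q}
= \Res{j',\,j_q,\dots,j_1}{\sigma',\,\sigma_q,\dots,\sigma_1}
= \Res{j',\,\xi_q,\dots,\xi_1}{\sigma',\,\sigma_q,\dots,\sigma_1},
\]
so the two residue factors agree. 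I do not foresee a real obstacle; the only delicate point is verifying the index identification $\xi_i = j_i$ under the global shift constraint, which is routine.
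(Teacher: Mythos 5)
Your proposal is correct and follows essentially the same route as the paper's proof: expand both sides via \eqref{cBactspar}--\eqref{cBactsparbis}, note that nonzero contributions force $j'=j+\kappa_1+\cdots+\kappa_{q+1}$ (hence $\xi_s=j_s$), and then match the residues by combining the hypothesis $\omega_j^\sigma=\omega_{j'}^{\sigma'}$ with the permutation invariance in \eqref{revres}. Your two-step breakdown of the residue matching just makes explicit what the paper compresses into one sentence.
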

\begin{proof}
If $j'\neq j+\kappa_1+\dots+\kappa_{q+1} $ then
both sides of identity \eqref{psmirabilis} vanish. 
Otherwise $j'= j+\kappa_1+\dots+\kappa_{q+1} $ and, 
recalling the definitions of $j_s, \xi_s$ in \eqref{cBactspar}, \eqref{cBactsparbis}, one has 
$$
\xi_s = j' - \kappa_{s+1} -  \ldots - \kappa_{q+1} = j +\kappa_1 + \ldots + \kappa_s = j_s \ , 
\quad
\forall \, s=1,\dots,q \ .
$$
Using  \eqref{generalresidue}, the second property of 
\eqref{revres} and the assumption 
$\omega_j^\sigma = \omega_{j'}^{\sigma'}$, one has   $\Res{j, &j_1, &\dots, &j_q}{\sigma, &\sigma_1, &\dots, &\sigma_q}=\Res{j', &\xi_q, &\dots, &\xi_1}{\sigma', &\sigma_q, &\dots, &\sigma_1}.$ 
Thus by \eqref{cBactspar}, \eqref{cBactsparbis}, the two sides of \eqref{psmirabilis} are equal. 
\end{proof}

\subsection{Entanglement coefficients for $p=2$}
We conclude this section by giving explicit formulas for the entanglement  coefficients in \eqref{entcoeff} and
the residue term in \eqref{generalresidue} for the particular case $p=2$, where we assume \eqref{choicek}, namely we fix $\umu=\tfrac14 $, $k=0$, $k'=2$ and $\omega_*=\omega_*^{(2)} $. 

We first consider the Fourier series expansions of the
even functions $p_n(x), a_n (x) $, $ n \in \bN $,  defined in \eqref{apexp}, 
\begin{equation}
\label{ajphfe}
\begin{aligned}
p_n (x) &= \frac12 p_n^{[0]}+ \sum_{\kappa\geq1} p_n^{[\kappa]} \cos(\kappa x) = \frac12 p_n^{[0]}+ \sum_{\kappa\geq1} \frac{p_n^{[\kappa]}}{2}  e^{\im \kappa x}+ \frac{p_n^{[-\kappa]}}{2}e^{-\im \kappa x} \, ,\\ a_n(x) &= \frac12 a_n^{[0]}+ \sum_{\kappa\geq1} a_n^{[\kappa]} \cos(\kappa x)= \frac12 a_n^{[0]}+ \sum_{\kappa\geq1} \frac{a_n^{[\kappa]}}{2}  e^{\im \kappa x}+ \frac{a_n^{[-\kappa]}}{2}e^{-\im \kappa x} \, ,
\end{aligned}
\end{equation}
with $p_n^{[-\kappa]} :=p_n^{[\kappa]} $ and $a_n^{[-\kappa]} := a_n^{[\kappa]} $ for any $\kappa \in \bN $.
In view of \eqref{apexp}
 for any 
$ n = 1, \ldots, 4 $ the non zero Fourier coefficients  
$ p_n^{[\kappa]}$, $ a_n^{[\kappa]} $ are 
\begin{equation}
\label{pklakl}
\begin{aligned}
& p_1^{[\pm 1]} =a_1^{[\pm 1]} := -2  \, , \quad
p_2^{[0]} = 3 \, , \quad a_2^{[0]} = 4\, ,\quad 
p_2^{[\pm 2]} = a_2^{[\pm 2]} = -2 \, , \\
&p_3^{[\pm 1]} =  3\, , \quad 
a_3^{[\pm 1]} =  4 \, , \quad 
p_3^{[\pm 3]} = a_3^{[\pm 3]} = -3 \, ,  \\
& p_4^{[0]} = \frac14\, ,\quad a_4^{[0]} = -2   \, , \quad 
p_4^{[\pm 2]} = a_4^{[\pm 2]} = 4   \, , \quad 
p_4^{[\pm 4]} = a_4^{[\pm 4]} = -\frac{16}{3}  \, . 
\end{aligned}
\end{equation} 
In view of \eqref{notazione},
 the operators in \eqref{Bsani} have jets
 \begin{equation}
  \label{calB1} 
  {\cal B}_{1,0}^{[0]}=   \begin{bmatrix}0 & - \im  \\ \im  & \sgn^+ (D) 
  \end{bmatrix}\delta \, ,
 \end{equation}
and, for any 
$n= 1,\dots,4$ 
(recall 
also \eqref{ajphfe}, \eqref{pklakl})
\begin{equation}\label{paritymatters}
\!\!\!\!\! {\cal B}_{i,n}^{[\kappa]} = {\footnotesize\begin{matrix}
\begin{cases} \dfrac{e^{\im\kappa x}}{2} \begingroup \setlength\arraycolsep{-7pt} \begin{bmatrix} a_n^{[\kappa]} & - p_n^{[\kappa]}(\pa_x+\im \frac14) \\   p_n^{[\kappa]}(\pa_x+\im \frac14+ \im \kappa ) & 0  \end{bmatrix}\endgroup \e^n \, ,
 \   \textup{if } i = 0,  \ \kappa\equiv n \ (\textup{mod }2)\, ,\ |\kappa|\leq n\, , \\[5mm]
  \dfrac{e^{\im\kappa x}}{2} 
\begingroup \setlength\arraycolsep{2pt} \begin{bmatrix} 0 & - \im p_{n-1}^{[\kappa]} \\  \im p_{n-1}^{[\kappa]} & 0  \end{bmatrix}\endgroup \delta \e^{n-1} \, ,\quad\quad\quad\ \ \,
 \textup{if } i=1 , \ \ \kappa\not\equiv n \ (\textup{mod }2)\, ,\ |\kappa|\leq n-1\, , \\
 0\,,  \qquad\qquad\qquad\qquad\qquad\qquad\qquad\qquad\qquad\quad\qquad\qquad\qquad\qquad\ \ \text{otherwise}.
 \end{cases}\end{matrix}}
\end{equation}

\begin{lem}\label{lem:entexp}
For any  $j \in \bZ $ and $\sigma,\sigma'=\pm 1$, the  nonzero
 entanglement coefficients in \eqref{entcoeff} are:
 \begin{itemize}
 \item  If $i=1$, $n=0$ then 
 \end{itemize}
 \begin{subequations}\label{entexp}
 \begin{equation}\label{entexp0}
 \ent{1,0}{0}{j}{j}{\sigma'}{\sigma} =
 \frac{\sqrt{\sigma}\bar{\sqrt{\sigma'}} }{2\Omega_j} \Big(  \sigma\sigma' \sgn^+(j) - (\sigma + \sigma') \Omega_j\Big) \delta 
 \end{equation}
  where $
\Omega_j := \sqrt{| j+\tfrac14 | }  
$;
  \begin{itemize}
\item If $i=0$, 
 $n=1,2,3,4$, $\kappa\in\bZ$ then 
 \end{itemize}
 \begin{equation}\label{entexp1}
 \ent{0,n}{\kappa}{j+\kappa }{j}{\sigma'}{\sigma} =\tfrac14{\footnotesize\begin{matrix}
 \sqrt{\sigma}\bar{\sqrt{\sigma'}} \sqrt{\Omega_{j+\kappa}\Omega_{j}}\Big(a_n^{[\kappa]}-\sigma p_n^{[\kappa]}  \Omega_j \sgn^+(j) -\sigma' p_n^{[\kappa]} \Omega_{j+\kappa}\sgn^+(j+\kappa) \Big)\e^n   \end{matrix}}
 \end{equation}
 if  $\kappa \equiv n \ (\text{mod }2)$ and  $|\kappa|\leq n$, with constants $p_n^{[\kappa]}$, $a_n^{[\kappa]}$ given in \eqref{pklakl}, and vanish otherwise;
 \begin{itemize}
 \item If $i=1$, 
 $n=1,2,3$, $\kappa\in\bZ$ then 
 \end{itemize}
 \begin{equation}\label{entexp2}
\ent{1,n}{\kappa}{j+\kappa }{j}{\sigma'}{\sigma} =  - \frac14 \sqrt{\sigma}\bar{\sqrt{\sigma'}} \dfrac{1}{\sqrt{\Omega_j\Omega_{j+\kappa}}} \Big(\sigma \Omega_{j+\kappa}+ \sigma' \Omega_{j} \Big) p_{n-1}^{[\kappa]} \delta \e^{n-1} 
 \end{equation}
 if $\kappa \not \equiv n \ (\text{mod }2)$ and  $|\kappa|\leq n-1$, with constants $p_n^{[\kappa]}$ given in \eqref{pklakl}, and vanish otherwise.
\end{subequations}
\end{lem}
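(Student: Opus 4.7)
\medskip

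\noindent\textbf{Proof plan for Lemma \ref{lem:entexp}.}
The plan is a direct case-by-case computation: for each relevant pair $(i,n)$ we unfold the definition
$\ent{i,n}{\kappa}{j'}{j}{\sigma'}{\sigma} = ({\cal B}_{i,n}^{[\kappa]} f_j^\sigma, f_{j'}^{\sigma'})$
using the explicit description of the jets ${\cal B}_{i,n}^{[\kappa]}$ given in \eqref{calB1}--\eqref{paritymatters} and of the eigenvectors $f_j^\sigma$ in \eqref{def:fsigmaj}.
The vanishing constraints ($j'=j+\kappa$, plus the parity/size conditions on $\kappa$) are automatic: the parity and truncation of $\kappa$ reflect the Fourier support of the functions $p_n(x), a_n(x)$ in \eqref{ajphfe}-\eqref{pklakl}, which have only even (resp.\ odd) harmonics for even (resp.\ odd) $n$, so that ${\cal B}_{i,n}^{[\kappa]}=0$ outside the range recorded in \eqref{paritymatters}; the constraint $j'=j+\kappa$ follows from Fourier orthogonality together with \eqref{scalar} and is already stated in \eqref{entprop}.

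\smallskip

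The only ``computational content'' consists of the following two elementary identities, which I would establish once and use throughout. First, since $\Omega_j^2 = |j+\tfrac14| = \sgn^+(j)(j+\tfrac14)$, one has
\begin{equation*}
(\pa_x + \im\tfrac14) e^{\im j x} = \im\, \sgn^+(j)\, \Omega_j^2 \, e^{\im j x} \, ,\quad
(\pa_x + \im\tfrac14+\im\kappa) e^{\im j x} = \im\, \sgn^+(j+\kappa)\, \Omega_{j+\kappa}^2 \, e^{\im j x}\, .
\end{equation*}
Second, interpreting $\sqrt{-1}=\im$, the four square-root symbols satisfy
\begin{equation*}
\sqrt{\sigma}\sqrt{-\sigma}=\im\, ,\qquad
\im\,\sqrt{-\sigma}\,\overline{\sqrt{\sigma'}} = -\sigma\, \sqrt{\sigma}\,\overline{\sqrt{\sigma'}}\, ,\qquad
\im\,\sqrt{\sigma}\,\overline{\sqrt{-\sigma'}} = \sigma'\, \sqrt{\sigma}\,\overline{\sqrt{\sigma'}}\, ,
\end{equation*}
\begin{equation*}
\sqrt{-\sigma}\,\overline{\sqrt{-\sigma'}} = \sigma\sigma'\,\sqrt{\sigma}\,\overline{\sqrt{\sigma'}}\, ,
\end{equation*}
which I would verify by checking the four sign choices of $(\sigma,\sigma')$. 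These identities let one factor the common prefactor $\sqrt{\sigma}\,\overline{\sqrt{\sigma'}}$ out of the final expression.

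\smallskip

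With the preliminaries above the three cases are mechanical. For the case $(i,n)=(1,0)$, apply \eqref{calB1} to $f_j^\sigma$, pair with $f_j^{\sigma'}$, and use the square-root identities to collect the coefficient, yielding \eqref{entexp0}. For the case $i=0$, $n\in\{1,\dots,4\}$, apply the matrix operator in \eqref{paritymatters} to $f_j^\sigma$: the entry $-p_n^{[\kappa]}(\pa_x+\im/4)$ acts on the second component $\sqrt{-\sigma}e^{\im jx}$ producing a factor $\im\,\sgn^+(j)\Omega_j^2$, while $p_n^{[\kappa]}(\pa_x+\im/4+\im\kappa)$ acts on the first component $-\sqrt{\sigma}\Omega_j e^{\im jx}$ producing $\im\,\sgn^+(j+\kappa)\Omega_{j+\kappa}^2$; the shift $e^{\im\kappa x}$ then moves the Fourier index to $j+\kappa$. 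Pairing with $f_{j+\kappa}^{\sigma'}$ via \eqref{scalar}, dividing by the normalizations $\sqrt{2\Omega_j}\sqrt{2\Omega_{j+\kappa}}$, and applying the square-root identities to factor $\sqrt{\sigma}\,\overline{\sqrt{\sigma'}}\sqrt{\Omega_j\Omega_{j+\kappa}}$ gives \eqref{entexp1}. The case $i=1$, $n\in\{1,2,3\}$ is analogous but simpler since the matrix in \eqref{paritymatters} has only the antidiagonal coupling through $\im\, p_{n-1}^{[\kappa]}$ (no derivatives); the resulting $\im$ combines with $\sqrt{\pm\sigma^{(\prime)}}$ via the identities above to yield the stated form \eqref{entexp2}.

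\smallskip

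No genuine obstacle is expected: everything reduces to organized bookkeeping. The only delicate point is the consistent use of the convention $\sqrt{-1}=\im$ and the sign manipulations of $\sqrt{\sigma},\sqrt{-\sigma}$, which is why I would isolate the three identities above before the case analysis. Once those are in place, the three formulas \eqref{entexp0}--\eqref{entexp2} follow in a uniform manner.
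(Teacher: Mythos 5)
Your plan is essentially identical to the paper's own proof: both proceed by direct computation of the scalar products, using the explicit form of $\cB_{i,n}^{[\kappa]}$ from \eqref{calB1} and \eqref{paritymatters}, the eigenvectors \eqref{def:fsigmaj}, the rewriting $j+\tfrac14 = \sgn^+(j)\,\Omega_j^2$ (and its shifted version), and the square-root sign identities, which appear in the paper as \eqref{idsqrtsigma}. Your four identities agree with the paper's three (one is stated with an extra minus sign on both sides; the fourth, $\sqrt{\sigma}\sqrt{-\sigma}=\im$, is redundant but harmless), and the parity/size constraints on $\kappa$ are indeed read off from \eqref{paritymatters}, so no gap is present.
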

\begin{proof}
Recall that $f_j^\sigma = f_j^\sigma(\tfrac14) $ are given in \eqref{def:fsigmaj}.
 In view of  \eqref{calB1} we have
$$
{\cal B}_{0,1}^{[0]} f_j^\sigma  
= \frac{e^{\im j x}}{\sqrt{2\Omega_j}}\vet{- \im \sqrt{-\sigma}}{- \im \sqrt{\sigma} \Omega_j + \sqrt{-\sigma} \sgn^+(j)} \delta\, ,
$$
and by \eqref{entcoeff}
$$
 \ent{0,1}{0}{j}{j}{\sigma'}{\sigma}  = \frac{1}{2\Omega_j}\left( \sqrt{-\sigma}\, \bar{\sqrt{-\sigma'}} \,  \sgn^+(j) 
 + \im \sqrt{-\sigma}\,  \bar{\sqrt{\sigma'}} \, \Omega_j - \im \sqrt{\sigma} \bar{\sqrt{-\sigma'}} \Omega_j
 \right)\delta \, .
$$
Then \eqref{entexp0} follows because  for any  $\sigma,\sigma'=\pm$
\begin{equation}\label{idsqrtsigma}
{\footnotesize \begin{matrix}
 \sqrt{-\sigma}\bar{\sqrt{-\sigma'}} = \sigma\sigma'\sqrt{\sigma}\bar{\sqrt{\sigma'}}\, , \ \
 \im\sqrt{-\sigma}\bar{\sqrt{\sigma'}} = -\sigma \sqrt{\sigma}\bar{\sqrt{\sigma'}},\ \  -\im\sqrt{\sigma}\bar{\sqrt{-\sigma'}} = -\sigma' \sqrt{\sigma}\bar{\sqrt{\sigma'}} \end{matrix}} .
\end{equation} 
We now prove \eqref{entexp1}. In this case,
in view of \eqref{paritymatters} and  \eqref{def:fsigmaj}, we obtain
$$
{\cal B}_{0,n}^{[\kappa]} f_j^\sigma = \frac{e^{\im (j+\kappa )x}}{2\sqrt{2\Omega_j}}\vet{-a_n^{[\kappa]}\sqrt{\sigma}\Omega_j-\im\sqrt{-\sigma}p_n^{[\kappa]}(j+\frac14)}{-\im\sqrt{\sigma}p_n^{[\kappa]}\Omega_j(j+ \kappa +\frac14) } \e^n \, ,
$$
thus by \eqref{entcoeff} we have 
$$
\begin{aligned}
\ent{0,n}{\kappa}{j+\kappa}{j}{\sigma'}{\sigma}&=\frac{1}{4\sqrt{\Omega_j\Omega_{j+\kappa}}}\Big( \sqrt{\sigma}\bar{\sqrt{\sigma'}} a_n^{[\kappa]} \Omega_j\Omega_{j+\kappa} + \im \sqrt{-\sigma}\bar{\sqrt{\sigma'}} p_n^{[\kappa]} \Omega_{j+\kappa}(j+\tfrac14)\\ &\qquad \qquad -\im \sqrt{\sigma}\bar{\sqrt{-\sigma'}} p_n^{[k]} \Omega_j(j+\kappa+\tfrac14)\Big)\e^n \, .
\end{aligned}
$$
Formula
 \eqref{entexp1} follows by \eqref{idsqrtsigma}  and  
 $$
 j+\tfrac14 = \Omega_j^2\, \sgn^+(j)\,, \quad j+\kappa+\tfrac14 = \Omega_{j+\kappa}^2 \,
\sgn^+(j+\kappa)\, .
 $$
Similarly we prove \eqref{entexp2}. By \eqref{paritymatters} and \eqref{def:fsigmaj},
$$
\cB_{1,n}^{[\kappa]} f_j^\sigma = - \im p_{n-1}^{[\kappa]} \frac{e^{\im (j+\kappa)x}}{2\sqrt{2\Omega_j}}\vet{\sqrt{-\sigma}}{\sqrt{\sigma}\Omega_j}\delta\e^{n-1}\, ,
$$
thus by \eqref{entcoeff} we have
$$
\ent{1,n}{\kappa}{j+\kappa }{j}{\sigma'}{\sigma} = \frac{p_{n-1}^{[\kappa]} }{4\sqrt{\Omega_j\Omega_{j+\kappa}}} \big( \im \sqrt{-\sigma}\bar{\sqrt{\sigma'}} \Omega_{j+\kappa} - \im \sqrt{\sigma}\bar{\sqrt{-\sigma'}}\Omega_{j} \big)\delta\e^{n-1} \, ,
$$
and by \eqref{idsqrtsigma} we conclude \eqref{entexp2}. 
\end{proof}

We now give some effective formulas to compute the residue term in \eqref{generalresidue}. 
\begin{lem}\label{rem:res}
 Let $j_0,j_1,\dots,j_q\in \bN_0$,  $\sigma_0,\dots,
 \sigma_q=\pm$.  
Then the 
coefficient $\Res{j_0, &j_1, &\dots, &j_q}{\sigma_0, &\sigma_1, &\dots, &\sigma_q}$ in \eqref{generalresidue} fulfills
\\
\texttt{\bf I.} If for any $ \iota =0,\dots,q $ one has $\omega_{j_\iota}^{\sigma_\iota} \neq \omega_* $ (no pole) then
\begin{equation}\label{residuevalue}
 \Res{j_0, &j_1, &\dots, &j_q}{\sigma_0, &\sigma_1, &\dots, &\sigma_q}  = 0 \ ;
\end{equation}
\texttt{\bf II.} If there is one and only one index $ \iota \in\{0,\dots,q \}$ such that $\omega_{j_\iota}^{\sigma_\iota} =\omega_* $  (single pole) then
\begin{equation}\label{residuevalue1}
 \Res{j_0, &j_1, &\dots, &j_q}{\sigma_0, &\sigma_1, &\dots, &\sigma_q}  = \frac{ 1}{ ( \omega_{j_0}^{\sigma_0}-\omega_*)\ldots ( \omega_{j_{\iota-1}}^{\sigma_{\iota-1}}-\omega_*)( \omega_{j_{\iota+1}}^{\sigma_{\iota+1}}-\omega_*)\ldots(\omega_{j_q}^{\sigma_q}-\omega_*) }  \ ;
\end{equation}
\texttt{\bf III.} If there are two and only two indices $\iota_1,\iota_2 \in\{0,\dots,q\}$ such that  $\omega_{\iota_1}^{\sigma_{\iota_1}}=\omega_{\iota_2}^{\sigma_{\iota_2}} = \omega_*$ (double pole) then
\begin{equation}\label{residuevalue2}
   \Res{j_0, &j_1, &\dots, &j_q}{\sigma_0, &\sigma_1, &\dots, &\sigma_q}  = - \Big(\!\!\! \sum_{\substack{m=0\\ m\neq \iota_1,\iota_2}}^q \frac{1}{\omega_{j_m}^{\sigma_m} - \omega_* } \Big)\, \Big(\!\!\! \prod_{\substack{k=0\\ k\neq \iota_1,\iota_2}}^q \frac{1}{\omega_{j_k}^{\sigma_k} - \omega_* }\Big) \, . 
\end{equation}
\texttt{\bf IV.} If $q \geq 1$ and  $\omega_{j_0}^{\sigma_0} = \dots =  \omega_{j_q}^{\sigma_q}  =\omega_*$ (i.e. pole of order $q+1$), then $\Res{j_0, &j_1, &\dots, &j_q}{\sigma_0, &\sigma_1, &\dots, &\sigma_q} = 0$.
\end{lem}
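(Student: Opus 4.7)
The plan is a direct application of the residue theorem, since the coefficient in \eqref{generalresidue} is the contour integral of a rational function of $\lambda$ whose only possible singularities are the points $\im\omega_{j_\iota}^{\sigma_\iota}$, and $\Gamma$ encloses only $\im\omega_*$.

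First I would rewrite each elementary factor in the form $(\lambda - \im\omega_{j_\iota}^{\sigma_\iota})^{-1} = \frac{1}{-\im(\omega_{j_\iota}^{\sigma_\iota} - \omega_*)}$ whenever evaluated near $\lambda = \im\omega_*$, so as to keep careful track of the powers of $\im$ produced. Case \texttt{I} is then immediate: when no $\im\omega_{j_\iota}^{\sigma_\iota}$ coincides with $\im\omega_*$, the integrand is holomorphic in a neighbourhood of the region bounded by $\Gamma$, whence the integral vanishes.

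For Case \texttt{II} (a single simple pole), the residue theorem gives
$$
\oint_\Gamma \frac{\de\lambda}{\prod_{k=0}^q (\lambda - \im\omega_{j_k}^{\sigma_k})} = 2\pi\im \prod_{k\neq \iota}\frac{1}{\im\omega_* - \im\omega_{j_k}^{\sigma_k}}\, ,
$$
and multiplying by $(-\im)^q/(2\pi\im)$ collapses all the factors of $\im$: since $(-\im)^q (-\im)^{-q}=1$ and the $q$ remaining denominators yield $\prod_{k\neq\iota}(\omega_{j_k}^{\sigma_k}-\omega_*)^{-1}$ up to a sign that matches $(-1)^q\cdot(-1)^q=1$, one obtains exactly \eqref{residuevalue1}.

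For Case \texttt{III} (a double pole at $\im\omega_*$ coming from the two indices $\iota_1,\iota_2$) I would use the standard formula for a residue at a pole of order two,
$$
\mathrm{Res}_{\lambda=\im\omega_*}\frac{1}{(\lambda-\im\omega_*)^2\prod_{k\neq \iota_1,\iota_2}(\lambda - \im\omega_{j_k}^{\sigma_k})} = \left.\frac{\de}{\de\lambda}\frac{1}{\prod_{k\neq \iota_1,\iota_2}(\lambda - \im\omega_{j_k}^{\sigma_k})}\right|_{\lambda=\im\omega_*}\, ,
$$
and combine it with the identity
$$
\frac{\de}{\de\lambda}\prod_{k}(\lambda - a_k)^{-1} = -\Big(\sum_{m} \frac{1}{\lambda - a_m}\Big)\prod_{k} \frac{1}{\lambda - a_k}\, ,
$$
applied at $\lambda = \im\omega_*$. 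A bookkeeping of powers of $\im$ analogous to Case \texttt{II} (one factor of $\im$ from the sum and $q-1$ from the product, giving $\im^q$ which combines with $(-\im)^q$ to produce $-1$) yields \eqref{residuevalue2}. Case \texttt{IV} is the easiest: when all $\im\omega_{j_k}^{\sigma_k}$ coincide with $\im\omega_*$, the integrand reduces to $(\lambda-\im\omega_*)^{-(q+1)}$, whose residue at $\im\omega_*$ is zero for every $q\geq 1$. The only conceivable obstacle is the sign/$\im$-bookkeeping in Case \texttt{III}, which I would handle by first writing everything in the form $(\im\omega_* - \im\omega_{j_k}^{\sigma_k})^{-1}$ and then converting to $(\omega_{j_k}^{\sigma_k}-\omega_*)^{-1}$ at the very end.
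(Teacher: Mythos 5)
Your proof is correct and takes the same approach as the paper, which simply cites the residue theorem; you have filled in the elementary sign and $\im$-bookkeeping that the paper leaves implicit, and your computations check out.
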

\begin{proof}
Apply the residue theorem to formula \eqref{generalresidue}.
\end{proof}

\begin{rmk}\label{remarkinoid}
From \eqref{hppar} one checks that 
  \begin{subequations}\label{looping}
 \begin{align}\label{outsideloop}
 &(\uno-P_0) \mathcal{P}[\cB_{\ell_q}^{[\kappa_q]},\dots,\cB_{\ell_1}^{[\kappa_1]}]   f_j^\sigma  = \\ \notag
  &{\footnotesize \begin{cases}
 \underset{\sigma_1,\dots,\sigma_{q-1} = \pm }{\sum}   \sigma_1\dots\sigma_{q-1} \ent{\ell_1}{\kappa_1}{j_1}{j}{\sigma_1}{\sigma}  \ent{\ell_2}{\kappa_2}{j_2}{j_1}{\sigma_2}{\sigma_1} \dots \ent{\ell_q}{\kappa_q}{0}{j_{q-1}}{+}{\sigma_{q-1}}\Res{j, &j_1, &\dots, &0}{\sigma, &\sigma_1, &\dots, &+}  f_{0}^{+} \, ,&\text{ if }j+\!\!\underset{{i=1}}{\stackrel{q}{\sum}} \kappa_i = 0\, , \\
\quad -\!\!\!\!\!\!\!\!  \underset{\sigma_1,\dots,\sigma_{q-1} = \pm }{\sum}  \sigma_1\dots\sigma_{q-1} \ent{\ell_1}{\kappa_1}{j_1}{j}{\sigma_1}{\sigma}  \ent{\ell_2}{\kappa_2}{j_2}{j_1}{\sigma_2}{\sigma_1} \dots \ent{\ell_q}{\kappa_q}{2}{j_{q-1}}{-}{\sigma_{q-1}}\Res{j, &j_1, &\dots, &2}{\sigma, &\sigma_1, &\dots, &-}  f_{2}^{-} \, ,&\text{ if }j+\!\!\underset{{i=1}}{\stackrel{q}{\sum}} \kappa_i = 2\, ,\\[4mm]
 \mathcal{P}[\cB_{\ell_q}^{[\kappa_q]},\dots,\cB_{\ell_1}^{[\kappa_1]}]   f_j^\sigma\, , &\text{ otherwise}\, ,
 \end{cases}}
 \end{align}
 \normalsize
 and
  \begin{align}\label{insideloop}
 &P_0 \mathcal{P}[\cB_{\ell_q}^{[\kappa_q]},\dots,\cB_{\ell_1}^{[\kappa_1]}]   f_j^\sigma  = \\ \notag
  & {\footnotesize\begin{cases}
\quad -\!\!\!\!\!\!\!\!  \underset{\sigma_1,\dots,\sigma_{q-1} = \pm }{\sum}  \sigma_1\dots\sigma_{q-1} \ent{\ell_1}{\kappa_1}{j_1}{j}{\sigma_1}{\sigma}  \ent{\ell_2}{\kappa_2}{j_2}{j_1}{\sigma_2}{\sigma_1} \dots \ent{\ell_q}{\kappa_q}{0}{j_{q-1}}{-}{\sigma_{q-1}}\Res{j, &j_1, &\dots, &0}{\sigma, &\sigma_1, &\dots, &-}  f_{0}^{-} \, ,&\text{ if }j+\!\!\underset{{i=1}}{\stackrel{q}{\sum}} \kappa_i= 0\, , \\
 \underset{\sigma_1,\dots,\sigma_{q-1} = \pm }{\sum}  \sigma_1\dots\sigma_{q-1} \ent{\ell_1}{\kappa_1}{j_1}{j}{\sigma_1}{\sigma}  \ent{\ell_2}{\kappa_2}{j_2}{j_1}{\sigma_2}{\sigma_1} \dots \ent{\ell_q}{\kappa_q}{2}{j_{q-1}}{+}{\sigma_{q-1}}\Res{j, &j_1, &\dots, &2}{\sigma, &\sigma_1, &\dots, &+}  f_{2}^{+} \, ,&\text{ if }j+\!\!\underset{{i=1}}{\stackrel{q}{\sum}} \kappa_i = 2\, ,\\[4mm]
 0 \, , &\text{ otherwise}\, .
 \end{cases}}
 \end{align}
 \normalsize
 \end{subequations}
\end{rmk}

\section{Taylor expansion of  $\tB(\mu,\e)$}\label{section:conti}

Let us assume from now on \eqref{choicek}.
The main result of this section 
is the following 
expansion of the matrix $\tB(\mu,\e) $ in \eqref{tocomputematrix} 
which directly  implies Theorems \ref{thm:expansion} and \ref{thm:main}.

\begin{prop}[{\bf Expansion of $\tB(\mu,\e) $}] \label{expbT}
The coefficients $\alpha(\mu,\e)$, $\beta(\mu,\e)$ and $\gamma(\mu,\e)$ of the $ 2 \times 2 $ Hermitian
 matrix $ \tB(\mu,\e) $ in \eqref{tocomputematrix}  admit the  expansions  at  $(\mu,\e) = (\tfrac14+\delta, \e)$
 \begin{align}\label{abc}
&\alpha(\tfrac14+\delta, \e) = 
 \alpha(\tfrac14+\delta, 0)
+\widetilde{ \alpha_1} \e
+  \alpha_2\e^2  
+\widetilde{\alpha_2} \delta\e + 
  r(\e^3,\delta\e^2,\delta^2\e) \,, \notag \\
& \beta(\tfrac14+\delta, \e) =  
\beta(\tfrac14+\delta,0) 
+ 
\widetilde{\beta_1} \e 
+ 
 \beta_1 \e^2+ \widetilde{\beta_2}\delta\e+ \beta_2 \delta\e^2 
+ \widetilde{\beta}_3 \e^3 +  
  \beta_3 \e^4 \\ 
 &\qquad\qquad\qquad +\widetilde{\beta_4} \delta^2\e + r (\e^{5},\delta\e^3,\delta^2\e^2,\delta^3\e)  \,, \notag \\ \notag
&  \gamma(\tfrac14+\delta, \e) =  \gamma(\tfrac14+\delta, 0) +\widetilde{\,\gamma_1\,} \e + \gamma_2 \e^2 +  
\widetilde{\,\gamma_2\,} \delta \e+   r(\e^3,\delta\e^2,\delta^2\e)\, ,
 \end{align}
where 
\begin{subequations}\label{TCabc}
\begin{equation}
\begin{alignedat}{2}
\label{zerocoeff}
& \widetilde{\alpha_1} \e := \left( \kB_{0,1} f_2^+, f_2^+ \right)  = 0 \, , 
\qquad
 &&\ \widetilde{\alpha_2} \delta\e  := \left( \kB_{1,1} f_2^+, f_2^+ \right)  = 0\,, \\ 
\im &\widetilde{\beta_1} \e := \left( \kB_{0,1} f_0^-, f_2^+ \right)  = 0\, , \qquad  
&&\im \widetilde{\beta}_3 \e^3 :=\left( \kB_{0,3} f_0^-, f_2^+ \right)  = 0 \, , \\
&\widetilde{\,\gamma_1\,} \e := \left( \kB_{0,1} f_0^-, f_0^- \right)  = 0 
\, , 
\qquad 
&&\ \widetilde{\,\gamma_2\,} \delta \e := \left( \kB_{1,1} f_0^-, f_0^- \right)  = 0 \, , \\
&\widetilde{\,\beta_2\,} \delta\e := \left( \kB_{1,1} f_0^-, f_2^+ \right)  = 0 
\, , 
\qquad 
&&\ \widetilde{\,\beta_4\,} \delta^2\e := \left( \kB_{2,1} f_0^-, f_2^+ \right)  = 0 \, , \\
\end{alignedat}
\end{equation}
and 
\begin{align}\notag
&  \alpha_2 \e^{2} := \left( \kB_{0,2} f_2^+, f_2^+ \right)  =\frac98 \e^{2} \,  ,
 \quad 
\gamma_2 \e^{2} :=  \left( \kB_{0,2} f_0^-, f_0^- \right) = \frac1{16}\e^{2} \, , \\
& 
 \im \beta_1 \e^{2} :=  \left( \kB_{0,2} f_0^-, f_2^+ \right)  = 0  \, , \label{epsilonspento2}  \\
& \notag  \im \beta_2 \delta \e^2  := \left( \kB_{1,2} f_0^-, f_2^+ \right) =  -\dfrac{1}{2\sqrt{3}} \delta \e^2 \, , \quad
\im  \beta_3 \e^4 := \left( \kB_{0,4} f_0^-, f_2^+ \right)
= -\frac{39 \sqrt{3}}{512} \e^4 \, .
\end{align}
\end{subequations}
For any $\mu $ we have 
$ \beta(\mu,0)  = 0 $
and 
$ \alpha(\mu,0)=  
- \omega_{2}^+(\mu)$, 
$ \gamma(\mu,0) =  \omega_{0}^-(\mu) $ 
(cfr. \eqref{bmuzero}). 
\end{prop}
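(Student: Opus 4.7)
The plan is to compute each jet coefficient listed in \eqref{TCabc} by applying the abstract Taylor expansion of Lemma \ref{expansionthm} for $\kB$ to the two eigenvectors $f_0^-, f_2^+$, and to translate every product via the representation formulas \eqref{cBactstot}, reducing every matrix element to a finite sum of products of the explicit entanglement coefficients of Lemma \ref{lem:entexp} weighted by the residues of Lemma \ref{rem:res}. The calculation naturally splits into three parts of escalating difficulty.

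First I will dispose of all the tilded coefficients in \eqref{zerocoeff} by a pure parity argument. Each of them is of the form $(\kB_{i,j} f_k^\sigma, f_{k'}^{\sigma'})$ with the second index $j$ odd, namely $\kB_{0,1}, \kB_{1,1}, \kB_{2,1}, \kB_{0,3}$, so by \eqref{kB evodd} the relevant jet is supported only on odd Fourier harmonics. Since $f_0^-$ and $f_2^+$ live on even harmonics and the Fourier shift $k' - k \in \{0, \pm 2\}$ is even, the first identity of \eqref{entprop} forces every such scalar product to vanish.

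Next, for the second-order non-tilded coefficients I will use the closed form $\kB_{0,2} = P_0^* \mathbf{Sym}[\cB_{0,2} + \cB_{0,1} P_{0,1}] P_0$ from \eqref{upto4exp}, whose matrix elements expand via \eqref{cBactstot} into sums of two entanglement coefficients times a residue. For the off-diagonal element, the direct contribution from $\cB_{0,2}$ is precisely $\ent{0,2}{2}{2}{0}{+}{-}$, which, after substituting $a_2^{[2]} = p_2^{[2]} = -2$ from \eqref{pklakl} into the formula \eqref{entexp1}, vanishes identically. This cancellation is the characteristic deep-water degeneracy, absent in finite depth. A short computation then shows that the contribution of the dressing term $\cB_{0,1} P_{0,1}$ (summed with its adjoint via $\mathbf{Sym}$) also vanishes, proving $\beta_1 = 0$; the same formula at the two diagonal positions gives $\alpha_2 = \tfrac{9}{8}$ and $\gamma_2 = \tfrac{1}{16}$.

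The main labor will be to compute $\beta_2$ and, above all, $\beta_3$ by extracting the jets $\kB_{1,2}$ and $\kB_{0,4}$ from the expansions \eqref{ordine3} and \eqref{ordine4}, respectively. Each such jet is a sum of several products of jets of $\cB$ separated by resolvents, and \eqref{cBactstot} rewrites the corresponding matrix elements as multi-fold sums of products of entanglement coefficients with residue weights. I will systematically use the parity rule \eqref{kB evodd} to discard all summands whose constituent jets do not multiply to an even total Fourier shift equal to $2$, and the symmetry and exchange identities \eqref{revres} and \eqref{psmirabilis} to collapse pairs of equal summands, reducing the computation to a large but finite list. The hard part will be organizing this list and carrying out the algebraic simplifications to arrive at the stated values $\beta_2 = -\tfrac{1}{2\sqrt 3}$ and $\beta_3 = -\tfrac{39 \sqrt 3}{512} \neq 0$. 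The remainder estimates in \eqref{abc} follow from analyticity in $(\mu,\e)$ and standard bounds on $\kB_{i,j}$ with $i + j \geq 5$, while the expressions for $\alpha(\mu,0)$, $\gamma(\mu,0)$ and the identity $\beta(\mu,0) = 0$ are already the content of \eqref{bmuzero}.
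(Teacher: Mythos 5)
Your outlined strategy is exactly the paper's: dispose of the tilded coefficients by the parity rule \eqref{kB evodd}, then reduce each non-tilded coefficient to a finite sum of products of entanglement coefficients and residues via Lemma \ref{expansionthm}, \eqref{cBactstot}, \eqref{psmirabilis} and Lemma \ref{rem:res}, and finally read off $\beta(\mu,0)=0$, $\alpha(\mu,0)=-\omega_2^+(\mu)$, $\gamma(\mu,0)=\omega_0^-(\mu)$ from \eqref{bmuzero}. Every value you assert is also correct, and your observation that the direct contribution $\ent{0,2}{2}{2}{0}{+}{-}$ vanishes because $a_2^{[2]}=p_2^{[2]}$ in deep water (equivalently $a_2^{[2]} - p_2^{[2]}(\Omega_2-\Omega_0)=0$ with $\Omega_2-\Omega_0=1$) is the right way to isolate the deep-water degeneracy; note, though, that the dressing term $\mathrm{Bb}=\tfrac12\big(\cB_{0,1}^{[+1]}P_{0,1}^{[+1]}f_0^-,f_2^+\big)+\tfrac12\big(\cB_{0,1}^{[+1]}f_0^-,P_{0,1}^{[-1]}f_2^+\big)$ is not \emph{termwise} zero but vanishes only after a genuine cancellation between the $\sigma=\pm$ contributions, so "a short computation" is understating it.

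The genuine gap is that the dominant content of the Proposition, namely the numerical values $\beta_2=-\tfrac{1}{2\sqrt3}$ and $\beta_3=-\tfrac{39\sqrt3}{512}$, is only announced and not derived. These are precisely the coefficients on which the whole instability result hinges (through $\beta_3-\beta_2 T_2/T_1\neq 0$), and their computation is not a routine appeal to the lemmas you cite: the jet $\kB_{0,4}$, applied to $(f_0^-,f_2^+)$, splits into roughly thirty distinct scalar products $\big(\cB_{\ell_0}^{[\kappa_0]}\mathcal{P}[\cdots]f_0^-,\mathcal{P}[\cdots]f_2^+\big)$, each of which must be reduced via Lemma \ref{actionofL} and Lemma \ref{rem:res}, with careful bookkeeping of the residue poles (single versus double) and of the $P_0$ versus $\uno-P_0$ splitting in \eqref{looping}. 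The paper organizes this with explicit term graphs and verifies the resulting arithmetic with computer algebra; nothing in your plan is wrong, but until these sums are actually evaluated the Proposition is not proven. A minor secondary imprecision: the remainders in \eqref{abc} are not captured by "bounds on $\kB_{i,j}$ with $i+j\geq 5$" -- the $\alpha,\gamma$-remainders already absorb degree-$3$ jets and the $\beta$-remainder absorbs the degree-$4$ jets $\delta\e^3,\delta^2\e^2,\delta^3\e$; what is true (and sufficient) is that these are Taylor remainders of an analytic map and hence satisfy the stated bounds once the listed low-order coefficients are identified.
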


\begin{proof}[Proof of Theorems \ref{thm:main} and \ref{thm:expansion}]
Theorem \ref{thm:expansion} follows by  \eqref{abc}--\eqref{TCabc}  and  using that, 
by \eqref{omeghino}, 
\begin{align*}
& \alpha(\tfrac14+\delta,0) = - \Big( \frac94 + \delta  - \Omega(\tfrac94+\delta) \Big) = - \omega_* + \alpha_1 \delta + r(\delta^2)\ ,  \ \  \alpha_1:=\Omega'(\tfrac94)-1 \stackrel{\eqref{omeghino}}{= } -\tfrac23  \ , 
 \\
&  \gamma(\tfrac14+\delta,0)= \tfrac14 + \delta + \Omega(\tfrac14+\delta) = \omega_*  + \gamma_1 \delta  + r(\delta^2)\ , \quad 
\gamma_1:=\Omega'(\tfrac14)+1 \stackrel{\eqref{omeghino}}{=} 2  \ .
\end{align*}
Furthermore Assumption \ref{H} 
is fulfilled because $T_1:= \alpha_1 + \gamma_1 =  \frac43 $. Then we are in the  first degenerate case of \eqref{cases}  since $\beta_1 = 0$, 
$T_2 = \frac{19}{16}$ and 
$ \beta_3-\beta_2\frac{T_2}{T_1}=\frac{37 \sqrt{3}}{512} 
$. 
Finally $\gamma_1-\alpha_1=\tfrac83 \neq 0$, 
then the 
abstract Theorem \ref{degenerateisola} applies and 
proves 
Theorem \ref{thm:main} together with \eqref{isola.intro}. The expansions in \eqref{mupm} descend from \eqref{muexpa} and \eqref{expnuends}. 
\end{proof}

The rest of the section is devoted to the proof of Proposition 
\ref{expbT}.  We first  prove  \eqref{zerocoeff}.
\begin{lem}
The coefficients $\widetilde{\alpha}_1$, $\widetilde{\alpha}_2$, $\widetilde{\beta}_1$, $\widetilde{\beta}_2$, $\widetilde{\beta}_3$, $\widetilde{\beta}_4$, $\widetilde{\gamma}_1$, $\widetilde{\gamma}_2$  in \eqref{zerocoeff} vanish.
\end{lem}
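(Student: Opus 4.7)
The plan is that every one of these eight coefficients vanishes by a single parity argument, with no computation needed beyond unpacking the definitions. The key input is the parity relation \eqref{kB evodd}, which says that for every $i,j \in \bN_0$ with $j$ odd, the jet $\kB_{i,j}$ contains only odd Fourier harmonics in $x$; equivalently $\kB_{i,j}^{[\kappa]}=0$ whenever $j$ is odd and $\kappa$ is even. This in turn descends from the corresponding property \eqref{cB evodd} of $\cB_{i,j}$ (since the Stokes coefficients $p_n,a_n$ in \eqref{apexp} are purely even in $\cos$ when $n$ is even and purely odd when $n$ is odd) together with \eqref{P evodd} for the projectors $P_{i,j}$, both of which are already established in Section~\ref{sec:5}.

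First I would observe that the eight quantities in \eqref{zerocoeff} are, by inspection of the powers of $\delta$ and $\e$ attached, precisely the pairings
\[
(\kB_{i,j} f_{k_1}^{\sigma_1}, f_{k_2}^{\sigma_2})\, , \qquad (k_1,k_2)\in\{0,2\}^2\, , \qquad j\in\{1,3\}\, ,
\]
with $i=0,1,2$ as dictated by the matching power of $\delta$. The crucial point is that in every case $j$ is odd.

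Next I would use the fact that, by \eqref{def:fsigmaj}, the eigenvector $f_j^\sigma$ is supported on the single Fourier mode $e^{\im j x}$. Hence the scalar product \eqref{scalar} picks out precisely one Fourier harmonic:
\[
\bigl(\kB_{i,j} f_{k_1}^{\sigma_1}, f_{k_2}^{\sigma_2}\bigr) \;=\; \bigl(\kB_{i,j}^{[k_2-k_1]} f_{k_1}^{\sigma_1}, f_{k_2}^{\sigma_2}\bigr)\, ,
\]
and in our situation $k_2-k_1 \in \{-2,0,2\}$ is always even. Applying \eqref{kB evodd} with $j$ odd then gives $\kB_{i,j}^{[k_2-k_1]}=0$, so each of the eight scalar products is identically zero, proving the lemma.

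I expect no substantive obstacle here: the argument is a bookkeeping check that (a) the eight coefficients live in odd-$j$ jets, (b) the basis vectors are supported on even Fourier modes, and (c) parity forces the pairing to vanish. The only thing to be careful about is matching the powers of $\delta$ and $\e$ in each line of \eqref{zerocoeff} with the correct jet $\kB_{i,j}$, which is automatic from the convention \eqref{notazione}.
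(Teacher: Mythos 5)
Your proposal is correct and is essentially the same argument the paper gives: both reduce each scalar product to a single Fourier harmonic $\kappa = k_2 - k_1 \in \{0,2\}$ (even), and both conclude by the parity relation \eqref{kB evodd} that $\kB_{i,j}^{[\kappa]}=0$ since every jet appearing in \eqref{zerocoeff} has $j$ odd. The paper phrases it via the even/odd splitting $\kB_{i,j}=\kB_{i,j}^{[\mathtt{ev}]}+\kB_{i,j}^{[\mathtt{odd}]}$ (kills the odd part by mode orthogonality, then kills the even part by parity), while you pick out the single coefficient $\kB_{i,j}^{[k_2-k_1]}$ directly — a cosmetic difference only.
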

\begin{proof} Each scalar product in \eqref{zerocoeff} splits into two terms accordingly to the splitting of the operators in even and odd harmonics defined below \eqref{Pellk*}. The term with odd harmonics vanishes because the difference between the harmonics of $f_0^-$ and $f_2^+$ is $2$.
Thus the coefficients are respectively  given by
$ \big( \kB_{0,1}^{[\mathtt{ ev}]} f_2^+, f_2^+ \big)$,  
$ \big( \kB_{1,1}^{[\mathtt{ ev}]} f_2^+, f_2^+ \big)$,  
$ 
\big( \kB_{0,1}^{[\mathtt{ ev}]} f_0^-, f_2^+ \big) $, $ 
\big( \kB_{1,1}^{[\mathtt{ ev}]} f_0^-, f_2^+ \big) $,  $ 
\big( \kB_{2,1}^{[\mathtt{ ev}]} f_0^-, f_2^+ \big) $, 
$ \big( \kB_{0,3}^{[\mathtt{ ev}]} f_0^-, f_2^+ \big)$, 
$ \big( \kB_{0,1}^{[\mathtt{ ev}]} f_0^-, f_0^- \big) $,  
$\big( \kB_{1,1}^{[\mathtt{ ev}]} f_0^-, f_0^- \big) $, 
and vanish because, by \eqref{kB evodd}, the operators 
$\kB_{0,1}^{[\mathtt{ ev}]}$, $\kB_{1,1}^{[\mathtt{ ev}]}$, $\kB_{0,1}^{[\mathtt{ ev}]}$, 
$\kB_{0,3}^{[\mathtt{ ev}]}$, $\kB_{0,1}^{[\mathtt{ ev}]}$, $ \kB_{1,1}^{[\mathtt{ ev}]} $ and $\kB_{2,1}^{[\mathtt{ev}]}$ are zero.
\end{proof}

We now compute the remaining coefficients in \eqref{epsilonspento2}. 
We give their  algebraic expression in terms of the entanglement coefficients, and their numerical values exploiting Mathematica. The code can be found
at the link in footnote\footnote{
\url{https://git-scm.sissa.it/amaspero/first-isola-of-modulational-instability-of-stokes-waves-in-deep-water}}.
We start with the quadratic terms.

\smallskip
\noindent{\bf Computation of $\alpha_2$}.  In view of \eqref{epsilonspento2}, \eqref{ordini012}, since $P_{1}^{[\pm1]}=\mathcal P[\cB_{1}^{[\pm 1]}]$ where by \eqref{paritymatters}  $ \cB_{0,1}^{[\kappa]}$ (and  similarly $P_{0,1}^{[\kappa]}$) is nonzero only for $\kappa=\pm 1$, one has
\begin{align*}
\alpha_2\e^2 & =  \left( \kB_{0,2} f_2^+, f_2^+ \right)  = 
 \left( \cB_{0,2} f_2^+, f_2^+ \right) 
 +
 \frac12 \left( \cB_{0,1} P_{0,1}  f_2^+, f_2^+ \right) 
  +
\frac12   \left( \cB_{0,1} f_2^+,  P_{0,1} f_2^+ \right) \\
   & = 
   	\underbrace{ \big( {\cal B}_{0,2}^{[0]} f_2^+, f_2^+\big)}_{ =\mathrm{Aa}  }
	 + 
   \frac12
  \underbrace{ \big( {\cal B}_{0,1}^{[+1]} P_{0,1}^{[-1]}f_2^+, f_2^+\big)}_{=:\mathrm{Ab} } 
    +
    \frac12 
\underbrace{    \big( {\cal B}_{0,1}^{[-1]} P_{0,1}^{[+1]}f_2^+, f_2^+\big)}_{=:\mathrm{Ac} }
\\ &\quad +
 \frac12
 \underbrace{\big( {\cal B}_{0,1}^{[+1]} f_2^+, P_{0,1}^{[+1]} f_2^+\big) }_{\stackrel{\eqref{psmirabilis}}{=}\mathrm{Ac}}
 +
 \frac12 
\underbrace{ \big( {\cal B}_{0,1}^{[-1]} f_2^+, P_{0,1}^{[-1]} f_2^+\big) }_{\stackrel{\eqref{psmirabilis}}{=}\mathrm{Ab}}
= \mathrm{Aa} + \mathrm{Ab} + \mathrm{Ac}
   \end{align*}
(for the last two terms we also used  \eqref{Pellk*}).  One then has, also by using \eqref{entprop},
   \begin{align}\label{A}
   & \mathrm{Aa} \stackrel{\eqref{entcoeff}}{=}  {\tB_{0,2}^{[0]}}_{2,2}^{+,+}  = -\frac{15}{8} \e^2 \ ;\\ \notag
   & \mathrm{Ab} = \big( {\cal B}_{0,1}^{[+1]} P_{0,1}^{[-1]}f_2^+, f_2^+\big)\stackrel{\eqref{psmirabilis}}{=} 
  \big( {\cal B}_{0,1}^{[-1]} f_2^+, P_{0,1}^{[-1]} f_2^+\big)     \stackrel{\eqref{cBactstot}, \eqref{residuevalue1}}{=}  \sum_{\sigma =\pm}   \frac{ \sigma |\sent{0,1}{-1}{1}{2}{\sigma}{+}|^{2}}{\omega_1^{\sigma}- \omega_{*}}
   = - \frac{15}{8}\e^2 ; 
   \\ \notag
   & \mathrm{A c} 
   = \big( {\cal B}_{0,1}^{[-1]} P_{0,1}^{[+1]}f_2^+, f_2^+\big)
   \stackrel{\eqref{psmirabilis}}{=}  \big( {\cal B}_{0,1}^{[+1]} f_2^+, P_{0,1}^{[+1]} f_2^+\big) 
   \stackrel{\eqref{cBactstot}, \eqref{residuevalue1}}{=}  \sum_{\sigma =\pm}  \frac{ \sigma |\sent{0,1}{+1}{3}{2}{\sigma}{+}|^{2}}{\omega_3^{\sigma}- \omega_{*}} = \frac{39}{8}\e^2 \ . 
   \end{align}
We conclude that $\alpha_2 = \frac98 $ as  claimed in \eqref{epsilonspento2}. \smallskip

\noindent{\bf Computation of $\beta_1$}. In view of \eqref{epsilonspento2}, \eqref{ordini012}, since $P_{1}^{[\pm1]}=\mathcal P[\cB_{1}^{[\pm 1]}]$ where by \eqref{paritymatters}  $ \cB_{0,1}^{[\kappa]}$ (and  similarly $P_{0,1}^{[\kappa]}$) is nonzero only for $\kappa=\pm 1$, one has
\begin{equation*}
\begin{aligned}
\im \beta_1\e^2 & =   \left( \kB_{0,2} f_0^-, f_2^+ \right) \\
& = \underbrace{ \big({\cal B}_{0,2}^{[+2]} f_0^-\,,\,f_{2}^+\big)}_{=:\mathrm{Ba}} + 
\frac12
\underbrace{ \big({\cal B}_{0,1}^{[+1]}P_{0,1}^{[+1]} f_0^-\, , \,f_{2}^+\big)}_{=:\mathrm{Bb}} + 
\frac12 
\underbrace{
\big({\cal B}_{0,1}^{[+1]} f_0^-\, , \, P_{0,1}^{[-1]}f_{2}^+\big)}_{\stackrel{\eqref{psmirabilis}}{=}\mathrm{Bb}}\, ,
\end{aligned} 
\end{equation*}
(for the last term we also used  \eqref{Pellk*}). We have 
\begin{align}\label{B}
\mathrm{Ba} &= \big({\cal B}_{0,2}^{[+2]} f_0^-\,,\,f_{2}^+\big)  \stackrel{\eqref{entcoeff}}{=}  {\tB_{0,2}^{[+2]}}_{2,0}^{+,-} \stackrel{\eqref{entexp}}{=} 0\\ \notag
\mathrm{Bb} &= 
 \big({\cal B}_{0,1}^{[+1]}P_{0,1}^{[+1]} f_0^-\, , \,f_{2}^+\big)
 \stackrel{\eqref{psmirabilis}}{=}
\big({\cal B}_{0,1}^{[+1]} f_0^-\, , \, P_{0,1}^{[-1]}f_{2}^+\big) \\ \notag
&\stackrel{\eqref{cBactstot}}{=} -\frac{{\tB_{0,1}^{[+1]}}_{1,0}^{-,-}{\tB_{0,1}^{[+1]}}_{2,1}^{+,-}}{\omega_1^--\omega_*}+ \frac{{\tB_{0,1}^{[+1]}}_{1,0}^{+,-}{\tB_{0,1}^{[+1]}}_{2,1}^{+,+}}{\omega_1^+-\omega_*} \stackrel{\eqref{entexp}}{=}0\, .
\end{align}
We conclude that $\beta_1 = 0$ as  claimed in \eqref{epsilonspento2}. 
\begin{rmk}
In the finite depth case the coefficient $\beta_1\neq 0$. In infinite depth the degeneracy
$ \beta_1 = 0 $ descends from  peculiar identities of the expansion of the Stokes wave. These are  coherent with the structure of the completely integrable 
quartic Birkhoff normal form of
the pure gravity water waves in deep water  proved in  \cite{ZakD,CW,CS2,BFP}, see also \cite{Wu3,DIP}.
\end{rmk}

\noindent{\bf Computation of $\gamma_2$}. Proceeding in a similar way as for the computation of $\alpha_{2}$, we get 
\begin{align*}
\gamma_2\e^2 & =  \left( \kB_{0,2} f_0^-, f_0^- \right) 
 = 
\underbrace{\big( {\cal B}_{0,2}^{[0]} f_0^-, f_0^-\big) }_{=: \mathrm{Ca}}
+
 \frac12
 \underbrace{\big( {\cal B}_{0,1}^{[+1]} P_{0,1}^{[-1]}f_0^-, f_0^-\big)}_{=: \mathrm{Cb}}
  +
  \frac12
  \underbrace{ \big( {\cal B}_{0,1}^{[-1]} P_{0,1}^{[+1]}f_0^-, f_0^-\big)}_{=: \mathrm{Cc}}
  \\ &\quad + 
  \frac12
  \underbrace{\big( {\cal B}_{0,1}^{[+1]} f_0^-, P_{0,1}^{[+1]} f_0^-\big) }_{\stackrel{\eqref{psmirabilis}}{=} \mathrm{Cc}}
  +
  \frac12 
  \underbrace{\big( {\cal B}_{0,1}^{[-1]} f_0^-, P_{0,1}^{[-1]} f_0^-\big)}_{\stackrel{\eqref{psmirabilis}}{=} \mathrm{Cb}} = 
   \mathrm{Ca} +  \mathrm{Cb} +  \mathrm{Cc}\, ,
  \end{align*}
where
\begin{align}
& \mathrm{Ca} = \big( {\cal B}_{0,2}^{[0]} f_0^-, f_0^-\big) = {\tB_{0,2}^{[0]}}_{0,0}^{-,-} =\frac78 \e^2 \label{C} \\
& \mathrm{Cb} = \big( {\cal B}_{0,1}^{[+1]} P_{0,1}^{[-1]}f_0^-, f_0^-\big)
= \big( {\cal B}_{0,1}^{[-1]} f_0^-, P_{0,1}^{[-1]} f_0^-\big)
 =
\dfrac{\big|{\tB_{0,1}^{[-1]}}_{-1,0}^{+,-}\big|^2}{\omega_{-1}^+ - \omega_*}- \dfrac{\big|{\tB_{0,1}^{[-1]}}_{-1,0}^{-,-}\big|^2}{\omega_{-1}^- - \omega_*} = -\frac{3}{16}\e^2
    \notag \\
& \mathrm{Cc} = 
\big( {\cal B}_{0,1}^{[-1]} P_{0,1}^{[+1]}f_0^-, f_0^-\big) 
=
\big( {\cal B}_{0,1}^{[+1]} f_0^-, P_{0,1}^{[+1]} f_0^-\big) 
=\dfrac{\big|{\tB_{0,1}^{[+1]}}_{1,0}^{+,-}\big|^2}{\omega_1^+ - \omega_*}-\dfrac{\big|{\tB_{0,1}^{[+1]}}_{1,0}^{-,-}\big|^2}{\omega_1^- - \omega_*} = -\frac{5}{8}\e^2 \, . \notag 
\end{align}
We conclude that $\gamma_2 = \frac{1}{16} $ as  claimed in \eqref{epsilonspento2}. \smallskip

We now proceed with the cubic term $\gamma_2 $.\\
\noindent{\bf Computation of $\beta_2$}. 
We shall use the following  identities.
\begin{lem}
For any $ (j,\sigma) \in \{(0,-), (2,+) \}$ we have 
\begin{align}
\label{beta2.aux1}
& P_0 \mathcal{P}[\cB_{0,2}^{[-2]}] f_2^+  = {P_0 \mathcal{P}[\cB_{0,2}^{[0]}] f_2^+} = P_0 \mathcal{P}[\cB_{0,2}^{[+2]}] f_0^-  ={P_0 \mathcal{P}[\cB_{0,2}^{[0]}] f_0^-} =0 \, , \\
\label{beta2.aux2}
& P_0 \mathcal{P}[\cB_{1,1}^{[\pm 1]}] f_j^\sigma = P_0  \mathcal{P}[\cB_{0,1}^{[\pm 1]}, \cB_{1,0}^{[0]}] f_j^\sigma = P_0  \mathcal{P}[\cB_{1,0}^{[0]}, \cB_{0,1}^{[\pm 1]}] f_j^\sigma = 0 \, . 
\end{align}

\end{lem}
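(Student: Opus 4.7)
The plan is to derive both families of identities directly from formula \eqref{insideloop} in Remark \ref{remarkinoid}, which fully characterizes when $P_0\mathcal{P}[\cB_{\ell_q}^{[\kappa_q]},\dots,\cB_{\ell_1}^{[\kappa_1]}]f_j^\sigma$ can be nonzero. According to that formula there are only two ways such an expression can survive: the total harmonic shift $j+\sum_{i=1}^q \kappa_i$ must equal either $0$ or $2$ (the only harmonics of the eigenvectors $f_0^-, f_2^+$ spanning $\mathrm{Rg}(P_0)$), and, additionally, the residue coefficient $\Res{j,j_1,\dots}{\sigma,\sigma_1,\dots}$ that appears must be nonzero. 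Both mechanisms will be used.

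For the second batch \eqref{beta2.aux2} the harmonic count alone does the job. Each term contains exactly one factor among $\cB_{1,1}^{[\pm 1]}$, $\cB_{0,1}^{[\pm 1]}$ or $\cB_{1,0}^{[0]}$ that shifts harmonics, the other being diagonal (recall \eqref{calB1}); thus the total shift equals $\pm 1$. Starting from $(j,\sigma)\in\{(0,-),(2,+)\}$ one lands on harmonic $\pm 1$, $1$ or $3$, never on $0$ or $2$. The ``otherwise'' branch of \eqref{insideloop} then gives $0$ for each of the six terms, concluding \eqref{beta2.aux2}.

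For the first batch \eqref{beta2.aux1} the harmonic condition is actually satisfied in every case: $\cB_{0,2}^{[\kappa]}$ with $\kappa\in\{-2,0,+2\}$ applied to $f_2^+$ or $f_0^-$ does land on harmonic $0$ or $2$. Here the vanishing must come from the residue. By \eqref{insideloop} the four terms reduce to scalar multiples of
\[
\Res{2,0}{+,-}f_0^-,\quad \Res{2,2}{+,+}f_2^+,\quad \Res{0,2}{-,+}f_2^+,\quad \Res{0,0}{-,-}f_0^-,
\]
respectively. In each of these four residues both indices $(j_\iota,\sigma_\iota)$ lie in $\{(0,-),(2,+)\}$, for which $\omega_{j_\iota}^{\sigma_\iota}=\omega_*$ by Lemma \ref{thm:unpert.coll}. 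So the integrand in \eqref{generalresidue} has a double pole exactly at $\omega_*$, and case \textbf{IV} of Lemma \ref{rem:res} yields $0$. This finishes \eqref{beta2.aux1}.

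The potential obstacle is only bookkeeping: one must be confident that $\cB_{0,2}^{[\kappa]}$ is indeed zero for $|\kappa|\neq 0,2$ (by \eqref{paritymatters}) and that $\cB_{1,1}^{[\kappa]}$ is zero except for $\kappa=\pm 1$, $\cB_{1,0}^{[\kappa]}$ except for $\kappa=0$, so that the enumeration of harmonic shifts is exhaustive; but these are immediate from \eqref{calB1}--\eqref{paritymatters}. No residue is ever computed beyond recognizing that all indices sit at the $\omega_*$-eigenvalue, so the whole argument is a clean application of Remark \ref{remarkinoid} together with the double-pole rule of Lemma \ref{rem:res}.
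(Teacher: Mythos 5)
Your argument is correct and follows the same route as the paper: you use the characterization of $P_0\mathcal{P}[\cdot]f_j^\sigma$ in Remark~\ref{remarkinoid} to dispatch \eqref{beta2.aux2} by the harmonic-shift condition (the total shift is $\pm1$, which never lands on $0$ or $2$), and \eqref{beta2.aux1} by noting that the surviving residues have all indices in $\{(0,-),(2,+)\}$, hence are pure higher-order poles at $\im\omega_*$ with vanishing residue. The paper's proof invokes the vanishing of the residue directly from \eqref{generalresidue} rather than via case IV of Lemma~\ref{rem:res}, but that is a cosmetic difference, not a different method.
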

\begin{proof}
Identities  \eqref{beta2.aux1} follow from formula \eqref{insideloop} with $q = 1$ and $(j,\sigma) = (2,+)$ or $(0,-)$,  since in both cases, by \eqref{generalresidue},  the residue coefficients $\tR_{j, 0}^{\sigma, -}$ and $\tR_{j,2}^{\sigma, +} $ vanish.
Identities  \eqref{beta2.aux2} follow  from \eqref{insideloop} too, with $j + \kappa_0 + \dots + k_q \neq 0,2$.
\end{proof}

We claim that the  coefficient $\beta_2$ is a  linear combination of the following terms:  
\begin{align}\label{addendsbeta2}
 \Theta\;  &: = \big({\cal B}_{1,2}^{[+2]} f_0^-\,,\,f_{2}^+\big)   \, ,\\ \notag
 \mathrm{I} &: =\big({\cal B}_{1,1}^{[+1]}\mathcal{P}[\cB_{0,1}^{[+1]}] f_0^-\, , \,f_{2}^+\big) =  \big({\cal B}_{0,1}^{[+1]} f_0^-\, , \, \mathcal{P}[{\cal B}_{1,1}^{[-1]}] f_{2}^+\big)   \, ,\\ \notag
  \mathrm{II}\; & := \big({\cal B}_{0,2}^{[+2]}\mathcal{P}[\cB_{1,0}^{[0]}] f_0^-\, , \,f_{2}^+\big)=  \big( \cB_{1,0}^{[0]} f_0^-\, , \,(\uno-P_0) \mathcal{P}[{\cal B}_{0,2}^{[-2]}] f_{2}^+\big)  \, ,\\
\mathrm{III\,a} & := \big({\cal B}_{0,1}^{[+1]}  \mathcal{P}[\cB_{1,1}^{[+1]}]  f_0^-\, , \,f_{2}^+\big) = \big( \cB_{1,1}^{[+1]}  f_0^-\, , \, \mathcal{P}[{\cal B}_{0,1}^{[-1]} ] f_{2}^+\big) \notag \, ,\\
 \notag
 \mathrm{III\,b}\; &:=\big({\cal B}_{0,1}^{[+1]}  \mathcal{P}[\cB_{1,0}^{[0]},\cB_{0,1}^{[+1]}]  f_0^-\, , \,f_{2}^+\big) = \big({\cal B}_{0,1}^{[+1]}  f_0^-\, , \,  \mathcal{P}[\cB_{1,0}^{[0]},\cB_{0,1}^{[-1]}]  f_{2}^+\big) \, , \\
\notag
 \mathrm{III\,c}\; & := \big({\cB}_{1,0}^{[0]}  f_0^-\, , \, (\uno-P_0) \mathcal{P} [{\cal B}_{0,1}^{[-1]} ,\cB_{0,1}^{[-1]}]  f_{2}^+\big) \, , \\
\notag
  \mathrm{III\,d}\; & := 
  \ \big({\cal B}_{0,1}^{[+1]}  \mathcal{P}[\cB_{0,1}^{[+1]},\cB_{1,0}^{[0]}]  f_0^-\, , \,f_{2}^+\big) =
 \mathrm{III\,c}\; +\big({\cB}_{1,0}^{[0]}  f_0^-\, , \, P_0 \mathcal{P} [{\cal B}_{0,1}^{[-1]} ,\cB_{0,1}^{[-1]}]  f_{2}^+\big) \, , \\
 \notag
 \mathrm{IV\,a}  & := \big({\cal B}_{1,0}^{[0]} (\uno-P_0)  \mathcal{P}[\cB_{0,2}^{[+2]}]  f_0^-\, , \,f_{2}^+\big) = \big(  \cB_{0,2}^{[+2]}  f_0^-\, , \,\mathcal{P}[ {\cal B}_{1,0}^{[0]}]f_{2}^+\big) \, ,\\ \notag
 \mathrm{IV\,b}\; & := \big({\cal B}_{1,0}^{[0]} (\uno-P_0)   \mathcal{P}[\cB_{0,1}^{[+1]},\cB_{0,1}^{[+1]}]  f_0^-\, , \,f_{2}^+\big) 
 \, ,   \\ \notag
 \mathrm{IV\, c} & :=  \big(\cB_{0,1}^{[+1]} f_0^-, \mathcal{P}[\cB_{0,1}^{[-1]},\cB_{1,0}^{[0]}] f_2^+ \big)  =  \mathrm{IV\,b}\;+  \big({\cal B}_{1,0}^{[0]} P_0   \mathcal{P}[\cB_{0,1}^{[+1]},\cB_{0,1}^{[+1]}]  f_0^-\, , \,f_{2}^+\big) \,,
 \end{align}
where the identities come from
 \eqref{psmirabilis} and \eqref{beta2.aux1}.
The claim follows by \eqref{ordine3},  \eqref{kB evodd}, \eqref{cB evodd}, \eqref{P evodd} together with
\begin{equation}
\label{Psplittano}
\begin{aligned}
&P_{0,1}^{[+1]}= \mathcal{P}[\cB_{0,1}^{[+1]}]\, ,\quad P_{0,2}^{[\pm 2]} =  \mathcal{P}[\cB_{0,2}^{[\pm 2]}]+\mathcal{P}[\cB_{0,1}^{[\pm 1]},\cB_{0,1}^{[\pm 1]}]\, ,  \\
&P_{1,1}^{[\pm 1]}=  \mathcal{P}[\cB_{1,1}^{[\pm 1]}]+\mathcal{P}[\cB_{0,1}^{[\pm 1]},\cB_{1,0}^{[0]}]+\mathcal{P}[\cB_{1,0}^{[0]},\cB_{0,1}^{[\pm 1]}]\, ,
\end{aligned} 
\end{equation}
whence, setting  $\widehat P_2 := (\uno-P_0) P_2 P_0$, one has
\begin{align} \notag
\im \beta_2 \delta\e^2 &= \big( \kB_{1,2}^{[2]} f_0^-, f_2^+ \big) \notag \\
& = 
\underbrace{ \big({\cal B}_{1,2}^{[+2]} f_0^-\,,\,f_{2}^+\big)}_{= \Theta}
  +
\frac12 \underbrace{\big({\cal B}_{1,1}^{[+1]}P_{0,1}^{[+1]} f_0^-\, , \,f_{2}^+\big) }_{= \mathrm{I}}
+
 \frac12 \underbrace{ \big({\cal B}_{1,1}^{[+1]} f_0^-\, , \, P_{0,1}^{[-1]}f_{2}^+\big) }_{=\mathrm{III a}} \\ \notag 
&+ 
\frac12 
\underbrace{\big({\cal B}_{0,2}^{[+2]}P_{1,0}^{[0]} f_0^-\, , \,f_{2}^+\big) }_{=\mathrm{II}}
+ 
\frac12 \underbrace{ \big({\cal B}_{0,2}^{[+2]} f_0^-\, , \, P_{1,0}^{[0]}f_{2}^+\big)}_{= \mathrm{ IV a }}
+\ \ 
\frac12 
\underbrace{\big({\cal B}_{0,1}^{[+1]}\widehat P_{1,1}^{[+1]} f_0^-\, , \,f_{2}^+\big) }_{
= \mathrm{ III a}  + \mathrm{ III b} + \mathrm{ III d} \mbox{ by } \eqref{Psplittano},  \eqref{beta2.aux2}
}
\\ \notag
&
+
 \frac12
 \underbrace{ \big({\cal B}_{0,1}^{[+1]} f_0^-\, , \, \widehat P_{1,1}^{[-1]}f_{2}^+\big)}_{=\mathrm{I} + \mathrm{III b + IV\, c}  \mbox{ by } \eqref{Psplittano}, \eqref{beta2.aux2}} \,
+\frac12 
\underbrace{\big({\cal B}_{1,0}^{[0]}\widehat P_{0,2}^{[+2]} f_0^-\, , \,f_{2}^+\big) }_{=\mathrm{IV a} + \mathrm{IV b} }
+ \frac12
\underbrace{\big({\cal B}_{1,0}^{[0]} f_0^-\, , \, \widehat P_{0,2}^{[-2]}f_{2}^+\big)}_{=\mathrm{II} + \mathrm{III c} \mbox{ by } \eqref{Psplittano}
} \\
& = \Theta +  \mathrm{I}+\mathrm{II} + \mathrm{III \, a} + \mathrm{III\, b} + \frac12 (\mathrm{III \, c} + \mathrm{III \, d} ) + \mathrm{IV a} + \frac12 (\mathrm{IV\,b} + \mathrm{IV \, c}) \ . 
\label{beta2.sum}
\end{align}
By \eqref{addendsbeta2},
   \eqref{cBactstot}, Lemma \ref{rem:res} and  finally \eqref{entexp} we obtain 
 \begin{align}\notag
& \Theta =  \ent{1,2}{+2}{2}{0}{+}{-}= -\frac{\im}{\sqrt{3}} \delta \e^2 \, , \ \
\mathrm{I} =  \frac{\ent{0,1}{+1}{1}{0}{+}{-}\ent{1,1}{+1}{2}{1}{+}{+}}{\omega_1^+-\omega_*} - \frac{\ent{0,1}{+1}{1}{0}{-}{-}\ent{1,1}{+1}{2}{1}{+}{-}}{\omega_1^--\omega_*} \,  = \frac{\im }{4 \sqrt{3}}\delta \e^2   \, , \\   \notag
&\mathrm{II}  =     \frac{\ent{1,0}{0}{0}{0}{+}{-}\ent{0,2}{+2}{2}{0}{+}{+}}{\omega_0^+-\omega_*} =\im  \frac{\sqrt{3}}{4} \delta \e^2 \, , \\ \notag
&\mathrm{ III a}  =  
\frac{\ent{1,1}{+1}{1}{0}{+}{-}\ent{0,1}{+1}{2}{1}{+}{+}}{\omega_1^+-\omega_*}-\frac{\ent{1,1}{+1}{1}{0}{-}{-}\ent{0,1}{+1}{2}{1}{+}{-}}{\omega_1^--\omega_*} =  \im \frac{\sqrt{3}}{4} \delta \e^2  \, , \\
\notag
& \mathrm{IIIb} =\frac{\ent{0,1}{+1}{1}{0}{-}{-}\ent{1,0}{0}{1}{1}{-}{-}\ent{0,1}{+1}{2}{1}{+}{-}}{(\omega_1^--\omega_*)^2 } 
 -\frac{\ent{0,1}{+1}{1}{0}{-}{-}\ent{1,0}{0}{1}{1}{+}{-}\ent{0,1}{+1}{2}{1}{+}{+}}{(\omega_1^+-\omega_*)(\omega_1^--\omega_*) }\\ \notag
&\qquad + \frac{\ent{0,1}{+1}{1}{0}{+}{-}\ent{1,0}{0}{1}{1}{+}{+}\ent{0,1}{+1}{2}{1}{+}{+}}{(\omega_1^+-\omega_*)^2 } 
 -\frac{\ent{0,1}{+1}{1}{0}{+}{-}\ent{1,0}{0}{1}{1}{-}{+}\ent{0,1}{+1}{2}{1}{+}{-}}{(\omega_1^+-\omega_*)(\omega_1^--\omega_*) }
 = \im  \frac{\sqrt{3}}{2}  \delta \e^2
 \, , \ \  \\ \notag
 & \mathrm{IIIc}
=
 \frac{\ent{1,0}{0}{0}{0}{+}{-}\ent{0,1}{+1}{1}{0}{+}{+}\ent{0,1}{+1}{2}{1}{+}{+}}{(\omega_0^+-\omega_*)(\omega_1^+-\omega_*)} 
 -\frac{\ent{1,0}{0}{0}{0}{+}{-}\ent{0,1}{+1}{1}{0}{-}{+}\ent{0,1}{+1}{2}{1}{+}{-}}{(\omega_0^+-\omega_*)(\omega_1^--\omega_*)} 
 = - \im \frac{5 \sqrt{3}}{16} \delta \e^2  \, ,\\ \notag
  &\mathrm{III\,d} = 
 \mathrm{III\,c}\; +  \frac{\ent{1,0}{0}{0}{0}{-}{-}\ent{0,1}{+1}{1}{0}{+}{-}\ent{0,1}{+1}{2}{1}{+}{+}}{(\omega_1^+-\omega_*)^2}    -\frac{\ent{1,0}{0}{0}{0}{-}{-}\ent{0,1}{+1}{1}{0}{-}{-}\ent{0,1}{+1}{2}{1}{+}{-}}{(\omega_1^--\omega_*)^2}
    =  - \im \frac{15 \sqrt{3}}{16}  \delta \e^2 
  \, ,\\ 
\notag
& \mathrm{IV a}    = -\frac{ \ent{0,2}{+2}{2}{0}{-}{-}\ent{1,0}{0}{2}{2}{+}{-} }{\omega_2^--\omega_*} =  - \im \frac{1}{4 \sqrt{3}} \delta \e^2\, ,\ \  \\
\notag
 & \mathrm{IV b}   =  \frac{\ent{0,1}{+1}{1}{0}{-}{-}\ent{1}{+1}{2}{1}{-}{-}\ent{1,0}{0}{2}{2}{+}{-}}{(\omega_1^--\omega_*)(\omega_2^--\omega_*)}  - \frac{\ent{0,1}{+1}{1}{0}{+}{-}\ent{0,1}{+1}{2}{1}{-}{+}\ent{1,0}{0}{2}{2}{+}{-}}{(\omega_1^+-\omega_*)(\omega_2^--\omega_*)} = 
  - \im \frac{5}{16 \sqrt{3}} \delta \e^2 \, ,\ \  \\
  \notag
 &\mathrm{IV c} = 
  \mathrm{IV b } + \frac{\ent{0,1}{+1}{1}{0}{-}{-}\ent{0,1}{+1}{2}{1}{+}{-}\ent{1,0}{0}{2}{2}{+}{+}}{(\omega_1^--\omega_*)^2} -\frac{\ent{0,1}{+1}{1}{0}{+}{-}\ent{0,1}{+1}{2}{1}{+}{+}\ent{1,0}{0}{2}{2}{+}{+}}{(\omega_1^+-\omega_*)^2}
= -\im \frac{5 \sqrt{3}}{16} \delta \e^2\,.
\end{align}
 By inserting the resulting values in \eqref{beta2.sum} we obtain that $\beta_2 = -\frac{1}{2\sqrt{3}}$ as claimed in \eqref{epsilonspento2}.
 \smallskip

We now conclude with the quartic term $\beta_3 $.\\
\noindent{\bf Computation of $\beta_3$}. 
In this part, to simplify notation, we shall simply denote  
\begin{equation}\label{notazionefinale}
\cB_{0,n} \equiv \cB_n\, ,\qquad P_{0,n} \equiv P_n\, ,\qquad \ent{0,n}{\kappa}{j}{j'}{\sigma}{\sigma'} \equiv \ent{n}{\kappa}{j}{j'}{\sigma}{\sigma'}\, , 
\end{equation}
since all the operators that enter in the computation of $\beta_3$ have this form. 
 By \eqref{ordine4} 
we have
\begin{align}\label{tB4}
 \im \beta_3 \e^4 &=  \big(\mathfrak{B}_{0,4} f_0^-,f_2^+ \big) =    \big({\cal B}_{4} f_0^-,f_2^+ \big)  + \frac12 \big({\cal B}_{3} P_1 f_0^-,f_2^+ \big) + \frac12 \big({\cal B}_{3}  f_0^-, P_1 f_2^+ \big)\\ 
 &+\frac12 \big({\cal B}_{2} \widehat P_{2} f_0^-,f_2^+ \big)+\frac12 \big({\cal B}_{2}  f_0^-, \widehat P_{2} f_2^+ \big)+\frac12 \big({\cal B}_{1}\widehat P_{3} f_0^-,  f_2^+ \big)+\frac12 \big({\cal B}_{1} f_0^-,\widehat P_{3}  f_2^+ \big) \notag \\
 &-\frac12\big({\cal B}_{1} P_{1} P_0 P_{2} f_0^-,f_2^+ \big)-\frac12 \big({\cal B}_{1}  f_0^-,P_{1} P_0 P_{2} f_2^+ \big) \, ,  \notag
\end{align} 
 where, for brevity, we denote
 \begin{equation}\label{P2hatP3hat}
     \widehat P_{2} := (\uno-P_0) P_{2} P_0\, ,\quad \textup{and}\quad \widehat P_{3} := (\uno-P_0) P_{3} P_0\, .
 \end{equation}
  Each  scalar product in \eqref{tB4} is a   sum of terms of the form (recall \eqref{notazione})
  $$
\big( \cB_{\ell_0}^{[\kappa_0]} \, \mathcal{P}[\cB_{\ell_1}^{[\kappa_1]}, \ldots, \cB_{\ell_p}^{[\kappa_p]}]  f_0^-,  
\mathcal{P}[\cB_{\ell_1}^{[\kappa_{p+1}]}, \ldots, \cB_{\ell_{p+q}}^{[\kappa_{p+q}}] f_2^+ 
\big) 
$$
which vanishes  unless 
$\kappa_0 + \kappa_1 + \ldots +\kappa_p - \kappa_{p+1}-\kappa_{p+q} =  2$.
To compute the non zero scalar products, we 
 represent the action of the nine operators  in \eqref{tB4} --i.e. ${\cal B}_4$, ${\cal B}_3P_1$, $P_1^*{\cal B}_3$, ${\cal B}_2\widehat P_2$, $\widehat P_2^*{\cal B}_2$, ${\cal B}_2\widehat P_2$, $\widehat P_2^*{\cal B}_2$, $ {\cal B}_1 P_1 P_0 P_2$, $ P_2^* P_0^* P_1^* {\cal B}_1  $-- on the eigenfunctions $f_{2}^+,\,f_{0}^- $ in \eqref{def:fsigmaj} through the following graphs, where we only highlight   the paths connecting $f_0^-$ to $f_2^+$, with the corresponding order in $\delta^i \e^j$ (recall formulas 	\eqref{Bsani}, \eqref{Psani}, and also \eqref{cB evodd}, \eqref{P evodd}, \eqref{Pellk*}):

\footnotesize
 \begin{equation}\label{harmonicgraphB4}{ \footnotesize 
 \begin{array}{ccc}
 \begin{tikzcd}[column sep = large, row sep=large]
f_{2}^+  \arrow[r, hookrightarrow] & {\color{purple} 2} \arrow[r,dash]   \arrow[d, dash]
& {\color{purple} 2} \arrow[d, dash] \\
\boxed{{\cal B}_4} 
&1 \arrow[r,dash] \arrow[d, dash]
& 1 \arrow[d, dash]\\
f_{0}^- \arrow[r, hookrightarrow] &{\color{purple} 0} \arrow[r, dash]    \arrow[uur,cyan,bend left,"\e^4\!\!\!\!\!\!\!\!\!\!\!\!\!\!\!\!\!\!\!\!"]   
& {\color{purple} 0} 
\end{tikzcd}  & \qquad &
\begin{tikzcd}
f_{2}^+  \arrow[r, hookrightarrow] & {\color{purple} 2} \arrow[d, dash]  \arrow[r,dash] 
& {\color{purple} 2} \arrow[r,red]  \arrow[d, dash] & {\color{purple} 2} \arrow[d, dash] \\
\boxed{{\cal B}_2\widehat P_2,\widehat P_2^*{\cal B}_2} 
 &1  \arrow[r,dash]  \arrow[d, dash]     &1 \arrow[ur,blue]  \arrow[r,dash]    \arrow[d, dash] 
& 1 \arrow[d, dash]\\
f_{0}^- \arrow[r, hookrightarrow] &{\color{purple} 0} \arrow[r, red]  \arrow[ur, blue, "\delta\e"] \arrow[uur,red,bend left,"\e^2\!\!\!\!\!"] 
& {\color{purple} 0} \arrow[r, dash]   \arrow[uur,red,bend right]  & {\color{purple} 0} 
\end{tikzcd}
\end{array} } \normalsize
\end{equation}
$$
{\footnotesize
\begin{array}{ccc}
\begin{tikzcd}
f_{2}^+  \arrow[r, hookrightarrow] & {\color{purple} 2} \arrow[d, dash]  \arrow[r,dash] 
& {\color{purple} 2} \arrow[r,dash]   \arrow[d, dash] & {\color{purple} 2} \arrow[d, dash] \\
\boxed{{\cal B}_3P_1,\widehat P_3^*{\cal B}_1} 
 &1  \arrow[r,dash]  \arrow[d, dash]     &1 \arrow[ur, violet]  \arrow[r,dash]    \arrow[d, dash] 
& 1 \arrow[d, dash]\\
f_{0}^- \arrow[r, hookrightarrow] &{\color{purple} 0}  \arrow[d, dash]   \arrow[r, green,"\delta"]  \arrow[ur, orange,"\e"] \arrow[dr, orange] 
& {\color{purple} 0} \arrow[r, dash] \arrow[d, dash]   \arrow[uur,brown,"\delta\e^2\!\!\!"]  & {\color{purple} 0}   \arrow[d, dash]  \\
& -1  \arrow[r, dash]   & -1  \arrow[r, dash] \arrow[uuur,violet,bend right,"\e^3"]  & -1 
\end{tikzcd}
&  &
\begin{tikzcd}
& 3 \arrow[r,dash] \arrow[d,dash] & 3  \arrow[r,dash] \arrow[d,dash] \arrow[dr,orange,"\e"]  & 3 \arrow[d,dash]\\ 
f_{2}^+  \arrow[r, hookrightarrow] 
& {\color{purple} 2} \arrow[r,dash]  \arrow[d, dash]& {\color{purple} 2} \arrow[d, dash]  \arrow[r,green,"\delta"]  & {\color{purple} 2} \arrow[d, dash] \\
\boxed{{\cal B}_1\widehat P_3, P_1^*{\cal B}_3} 
    &1   \arrow[r,dash]    \arrow[d, dash] &1 \arrow[ur, orange]  \arrow[r,dash]  \arrow[d, dash]  
& 1 \arrow[d, dash]\\
f_{0}^- \arrow[r, hookrightarrow] 
& {\color{purple} 0} \arrow[r, dash]  \arrow[ur, violet] \arrow[uur,brown,"\delta\e^2"] \arrow[uuur,violet,bend left,"\e^3"] &{\color{purple} 0} \arrow[r, dash]    & {\color{purple} 0} 
\end{tikzcd}
\end{array}}
$$
$$ {\footnotesize
\begin{array}{cc} 
\begin{tikzcd}
&  & 3  \arrow[r,dash] \arrow[d,dash] & 3  \arrow[r,dash] \arrow[d,dash] \arrow[dr,orange]  & 3 \arrow[d,dash]\\ 
f_{2}^+  \arrow[r, hookrightarrow] 
& {\color{purple} 2} \arrow[r,dash]  \arrow[d, dash]& {\color{purple} 2} \arrow[r,dash] \arrow[ur,orange,"\e"] \arrow[dr,orange] \arrow[r,green,"\delta"] \arrow[d, dash] & {\color{purple} 2} \arrow[d, dash]  \arrow[r,green]  & {\color{purple} 2} \arrow[d, dash] \\
\boxed{\cB_1 P_1 P_0 P_2 } 
    &1   \arrow[r,dash]    \arrow[d, dash] &1   \arrow[r,dash]    \arrow[d, dash] &1 \arrow[ur, orange]  \arrow[r,dash]  \arrow[d, dash]  
& 1 \arrow[d, dash]\\
f_{0}^- \arrow[r, hookrightarrow] 
& {\color{purple} 0} \arrow[r, red]   \arrow[uur,red,bend left,"\e^2\!\!\!\!\!\!\!\!\!\!\!\!\!\!\!"] &{\color{purple} 0} \arrow[r, dash] \arrow[ur,orange] &{\color{purple} 0} \arrow[r, dash]    & {\color{purple} 0} 
\end{tikzcd}  &
\begin{tikzcd}
f_{2}^+  \arrow[r, hookrightarrow] & {\color{purple} 2} \arrow[d, dash]  \arrow[r,dash] & {\color{purple} 2} \arrow[d, dash]  \arrow[r,dash] 
& {\color{purple} 2} \arrow[r,red, "\e^2"]   \arrow[d, dash] & {\color{purple} 2} \arrow[d, dash] \\
\boxed{ P_2^* P_0^* P_1^* \cB_1 } 
 &1  \arrow[r,dash]  \arrow[d, dash]   &1  \arrow[r,dash]  \arrow[d, dash]    \arrow[ur,orange] \arrow[dr,orange]   &1   \arrow[r,dash]    \arrow[d, dash] 
& 1 \arrow[d, dash]\\
f_{0}^- \arrow[r, hookrightarrow] &{\color{purple} 0}  \arrow[d, dash]   \arrow[r, green,"\delta"]  \arrow[ur, orange,"\e"] \arrow[dr, orange] 
& {\color{purple} 0} \arrow[r, green] \arrow[d, dash]   & {\color{purple} 0}  \arrow[r,dash] \arrow[d, dash] \arrow[uur, bend right, red] & {\color{purple} 0}  \\
& -1  \arrow[r, dash] & -1 \arrow[ur,orange] \arrow[r, dash]   & -1    & 
\end{tikzcd}
\end{array}}
$$
\normalsize
In particular, by \eqref{Psani}, the only nontrivial terms of order $\e^4$ involve, besides \eqref{Psplittano},
the terms
\begin{align}
\label{nodelta}
&
P_2^{[0]} = \mathcal{P}[\cB_2^{[0]}]+\mathcal{P}[\cB_1^{[+1]},\cB_1^{[-1]}]+\mathcal{P}[\cB_1^{[-1]},\cB_1^{[+1]}]\, ,\\ \notag
& P_3^{[\pm 1]} = \mathcal{P}[\cB_1^{[\mp 1]},\cB_1^{[\pm 1]},\cB_1^{[\pm 1]}]+\mathcal{P}[\cB_1^{[\pm 1]},\cB_1^{[\mp 1]},\cB_1^{[\pm 1]}]+\mathcal{P}[\cB_1^{[\pm 1]},\cB_1^{[\pm 1]},\cB_1^{[\mp 1]}]     \\
\notag
&\ +\mathcal{P}[\cB_2^{[0]},\cB_1^{[\pm 1]}]+\mathcal{P}[\cB_1^{[\pm 1]},\cB_2^{[0]}] +\mathcal{P}[\cB_2^{[\pm 2]},\cB_1^{[\mp 1]}] +\mathcal{P}[\cB_1^{[\mp 1]},\cB_2^{[\pm 2]}]+\mathcal{P}[\cB_3^{[\pm 1]}]\,  ,\\
\notag
& P_3^{[\pm 3]} = \mathcal{P}[\cB_3^{[\pm 3]}]+\mathcal{P}[\cB_2^{[\pm 2]},\cB_1^{[\pm 1]}]+\mathcal{P}[\cB_1^{[\pm 1]},\cB_2^{[\pm 2]}]+\mathcal{P}[\cB_1^{[\pm 1]},\cB_1^{[\pm 1]},\cB_1^{[\pm 1]}]\, .
\end{align}
Indeed, using in \eqref{qui}-\eqref{equi} that  $P_0P_3^{[\pm 1]}P_0= P_0P_3^{[\pm 3]}P_0=0$, 
\begin{align} \label{beta3formula}
&\im \e^4 \beta_3 := 
\underbrace{  \big({\cal B}_4^{[+2]} f_0^-\,,\,f_{2}^+\big) }_{= \mathit \Theta} + \frac12 \underbrace{ \big({\cal B}_3^{[+1]}P_1^{[+1]} f_0^-\, , \,f_{2}^+\big) }_{= \mathit{I}}
+ \frac12
\underbrace{ \big({\cal B}_3^{[+3]}P_1^{[-1]} f_0^-\, , \,f_{2}^+\big)}_{=\mathit{II}} \\ \notag
&\quad  
+\frac12
\underbrace{ \big({\cal B}_3^{[+1]} f_0^-\, , \, P_1^{[-1]}f_{2}^+\big) }_{=\mathit{V \, a}}
+\frac12 
\underbrace{\big({\cal B}_3^{[+3]} f_0^-\, , \, P_1^{[+1]}f_{2}^+\big)}_{=\mathit{VI \, a}}
\\  \notag
 &\quad 
   +\frac12 \underbrace{  \big({\cal B}_2^{[0]}(\uno-P_0) P_2^{[+2]} f_0^-\, , \,f_{2}^+\big) }_{ = \mathit{III\,a} + \mathit{III \, b} \, \mbox{(by } \eqref{Psplittano} \mbox{)}}
   + \frac12
   \underbrace{\big({\cal B}_2^{[+2]} (\uno-P_0) P_{2}^{[0]} f_0^-\, , \,f_{2}^+\big)  }_{= \mathit{IV \, a} + \mathit{IV \,b} + \mathit{IV\,c}  \mbox{ (by } \eqref{nodelta} \mbox{)}}
   \\ \notag
&\quad +\frac12 
\underbrace{\big({\cal B}_2^{[0]} f_0^-\, , \,(\uno-P_0)
 P_2^{[-2]}f_{2}^+\big)}_{= \mathit{IV\, a}+\mathit{V\, c} \mbox{ (by } \eqref{Psplittano} \mbox{)} }
 +\frac12 
 \underbrace{\big({\cal B}_2^{[+2]} f_0^-\, , \, (\uno-P_0)
   P_{2}^{[0]}f_{2}^+\big) }_{=\mathit{III\,a} + \mathit{V\,e} + \mathit{VI \, c}  \mbox{ (by } \eqref{nodelta} \mbox{)} }  \\ 
\label{qui}
&\quad  
 + \frac12
 \underbrace{ \big({\cal B}_1^{[+1]}  P_3^{[+1]} f_0^-\, , \,f_{2}^+\big)}_{= \mathit{V\,a} + \mathit{V\,b}+ 
\widehat{ \mathit{V \, c} }+ \mathit{V \, d} + \widehat{\mathit{V \, e}}+
\mathit{V\,f} + \mathit{V\,g} + \mathit{V\,h}   
 \mbox{ (by }  \eqref{nodelta} \mbox{)} }
+ \frac12 
\underbrace{\big({\cal B}_1^{[-1]} P_3^{[+3]} f_0^-\, , \,f_{2}^+\big)}_{ = \mathit{VI\,a} + \mathit{VI \,b} + \widehat{\mathit{VI\,c}} + \mathit{VI\,d} \mbox{ (by } \eqref{nodelta} \mbox{)}}
\\ \label{equi}
& \quad  +\frac12 
\underbrace{\big({\cal B}_1^{[+1]} f_0^-\, , \,   P_3^{[-1]}f_{2}^+\big)}_{ = \mathit{I} +\mathit{V\, b}
+ \widehat{\mathit{III \, b}} + \mathit{VI\,b} + 
\widehat{\mathit{IV\,c}} + 
\mathit{V\,g} + \mathit{V\,f} +  \mathit{VI\,d}
 \mbox{ (by } \eqref{nodelta} \mbox{)} }
+\frac12
\underbrace{
 \big({\cal B}_1^{[-1]} f_0^-\, , \,  P_3^{[-3]} f_{2}^+\big) }_{ = 
 \mathit{II} + 
 \mathit{V\,d}+
 \widehat{\mathit{IV\,b} } + \mathit{V \, h}
  \mbox{ (by } \eqref{nodelta} \mbox{)} }
\\ 
&  \displaystyle \left. \begin{aligned}
&  - \frac12 \big({\cal B}_1^{[+1]} P_1^{[+1]} P_0 P_2^{[0]} f_0^-\, , \, f_{2}^+\big) - \frac12 \big({\cal B}_1^{[-1]} P_1^{[+1]} P_0 P_2^{[+2]} f_0^-\, , \, f_{2}^+\big)  \\
& - \frac12 \big({\cal B}_1^{[+1]} P_1^{[-1]} P_0 P_2^{[+2]} f_0^-\, , \, f_{2}^+\big) - \frac12 \big({\cal B}_1^{[+1]}  f_0^-\, , \, P_1^{[-1]} P_0  P_{2}^{[0]} f_{2}^+\big)\\
& - \frac12 \big({\cal B}_1^{[-1]}  f_0^-\, , \, P_1^{[-1]} P_0 P_2^{[-2]} f_{2}^+\big) - \frac12 \big({\cal B}_1^{[+1]}  f_0^-\, , \, P_1^{[+1]} P_0 P_2^{[-2]} f_{2}^+\big)
\end{aligned}\right\}=: \mathbf{L}\, , \label{def:ellone} 
\end{align}
namely
\begin{align}\notag
 &\im \e^4 \beta_3 = {\mathit \Theta}+ \mathit{I} + \mathit{II}+\mathit{III\, a}+\tfrac12 \mathit{III\, b}+\tfrac12 \widehat{\mathit{III\, b}}+\mathit{IV\, a}+\tfrac12 \mathit{IV\, b}+\tfrac12 \widehat{\mathit{IV\, b}}+\tfrac12 \mathit{IV\, c}+\tfrac12 \widehat{\mathit{IV\, c}} \\  \notag
&\quad +\mathit{V\, a} + \mathit{V\, b}+\tfrac12\mathit{V\, c}+\tfrac12 \widehat{\mathit{V\, c}}+ \mathit{V\, d}+\tfrac12\mathit{V\, e}+\tfrac12 \widehat{\mathit{V\, e}}+\mathit{V\, f} + \mathit{V\, g}+ \mathit{V\, h}\\ \label{beta3expfin} 
&\quad +\mathit{VI\, a} + \mathit{VI\, b}+\tfrac12\mathit{VI\, c}+\tfrac12 \widehat{\mathit{VI\, c}}+ \mathit{VI\, d}+ \mathbf{L}
\end{align}
with, by \eqref{psmirabilis},
\begin{align}
\notag
{\mathit \Theta} &:=\big({\cal B}_4^{[+2]} f_0^-\,,\,f_{2}^+\big)\, ,\\
\notag
\mathit{I} &:= \big({\cal B}_3^{[+1]}P_1^{[+1]} f_0^-\, , \,f_{2}^+\big) 
\stackrel{\eqref{Psplittano},\eqref{psmirabilis}}{=} \big(\cB_1^{[+1]} f_0^-\, , \, \mathcal{P}[{\cal B}_3^{[-1]}] f_{2}^+\big)\, ,\\ \notag
\mathit{II} &:= \big({\cal B}_3^{[+3]}P_1^{[-1]} f_0^-\, , \,f_{2}^+\big) 
\stackrel{\eqref{Psplittano}}{=}
 \big(\cB_1^{[-1]} f_0^-\, , \, \mathcal{P}[{\cal B}_3^{[-3]}]f_{2}^+\big)  \, ,\\ \notag
\mathit{III\, a} &:= \big({\cal B}_2^{[0]} (\uno-P_0) \mathcal{P}[\cB_2^{[+2]}] f_0^-\, , \,f_{2}^+\big) 
\stackrel{\eqref{beta2.aux1}}{=} \big({\cal B}_2^{[+2]} f_0^-\, ,(\uno-P_0) \mathcal{P}[{\cal B}_2^{[0]}]\,f_{2}^+\big) \, , \\  \notag
\mathit{III\, b} &:= \big({\cal B}_2^{[0]} (\uno-P_0) \mathcal{P}[\cB_1^{[+1]},\cB_1^{[+1]}] f_0^-\, , \,f_{2}^+\big) \, , \\ 
\notag
\widehat{\mathit{III\, b}} & :=  \big({\cal B}_1^{[+1]} f_0^-\, , \mathcal{P}[\cB_1^{[-1]},{\cal B}_2^{[0]}]\,f_{2}^+\big) = \mathit{III\, b}
+
\big({\cal B}_2^{[0]} P_0 \mathcal{P}[\cB_1^{[+1]},\cB_1^{[+1]}] f_0^-\, , \,f_{2}^+\big)
 \, ,\\ \notag
\mathit{IV\, a} &:= \big({\cal B}_2^{[+2]} (\uno-P_0)  \mathcal{P}[\cB_2^{[0]}] f_0^-\, , \,f_{2}^+\big)  \stackrel{\eqref{beta2.aux1}}{=} { \big(\cB_2^{[0]} f_0^-\, , \, (\uno-P_0) \mathcal{P}[\cB_2^{[-2]}]f_2^+ \big)} \, ,\\ \notag
\mathit{IV\, b} &:= \big({\cal B}_2^{[+2]} (\uno-P_0) \mathcal{P}[\cB_1^{[+1]},\cB_1^{[-1]}] f_0^-\, , \,f_{2}^+\big) \, ,\\ \notag
\mathit{IV\, c} &:= \big({\cal B}_2^{[+2]} (\uno-P_0) \mathcal{P}[\cB_1^{[-1]},\cB_1^{[+1]}] f_0^-\, , \,f_{2}^+\big) \, ,  \\ 
\notag
\widehat{\mathit{IV\,b}}&:=  \big({\cal B}_1^{[-1]} f_0^-\, , \mathcal{P}[\cB_1^{[-1]},{\cal B}_2^{[-2]}]\,f_{2}^+\big)=\mathit{IV\, b}+{\big({\cal B}_2^{[+2]} P_0 \mathcal{P}[\cB_1^{[+1]},\cB_1^{[-1]}] f_0^-\, , \,f_{2}^+\big)} \, ,\\ \notag
\widehat{\mathit{IV \, c}} &:= \big({\cal B}_1^{[+1]} f_0^-\, , \,   \mathcal{P}[\cB_1^{[+1]},\cB_2^{[-2]}] f_{2}^+\big) =\mathit{IV \, c} +\big({\cal B}_2^{[+2]} P_0 \mathcal{P}[\cB_1^{[-1]},\cB_1^{[+1]}] f_0^-\, , \,f_{2}^+\big)\, ,\\ \notag 
\mathit{V\,a} &:= \big(\cB_1^{[+1]}\mathcal{P}[\cB_3^{[+1]}]f_0^-\, , \, f_2^+ \big) \stackrel{\eqref{psmirabilis}}{=}\big(\cB_3^{[+1]}f_0^-\, , \, \mathcal{P}[\cB_1^{[-1]}]f_2^+ \big) \, , \\ \notag
\mathit{V\,b} &:= \big(\cB_1^{[+1]}\mathcal{P}[\cB_2^{[0]},\cB_1^{[+1]}] f_0^-\, , \, f_2^+ \big) = \big(\cB_1^{[+1]}f_0^-\, , \, \mathcal{P}[\cB_2^{[0]},\cB_1^{[-1]}]f_2^+ \big) \, , \\ \notag
\mathit{V\,c} &:= \big(\cB_2^{[0]} f_0^-\, , \, (\uno-P_0) \mathcal{P}[\cB_1^{[-1]},\cB_1^{[-1]}]f_2^+ \big) ,\\ \notag
 \widehat{\mathit{V\,c} } & :=  \big(\cB_1^{[+1]}\mathcal{P}[\cB_1^{[+1]},\cB_2^{[0]}] f_0^-\, , \, f_2^+ \big) = \mathit{V\,c}+  \big(\cB_2^{[0]} f_0^-\, , \,P_0\mathcal{P}[\cB_1^{[-1]},\cB_1^{[-1]}]f_2^+ \big)  \,, \\ \notag
\mathit{V\,d} &:= \big(\cB_1^{[+1]}\mathcal{P}[\cB_2^{[+2]},\cB_1^{[-1]}] f_0^-\, , \, f_2^+ \big) = \big(\cB_1^{[-1]} f_0^-\, , \, \mathcal{P}[\cB_2^{[-2]},\cB_1^{[-1]}] f_2^+ \big) \, , \\ \notag
\mathit{V\,e} &:= \big(\cB_2^{[+2]} f_0^-\, , \, (\uno-P_0) \mathcal{P}[\cB_1^{[+1]},\cB_1^{[-1]}] f_2^+ \big) \, , \\ \notag
\widehat{\mathit{V\, e}} &:= \big(\cB_1^{[+1]}\mathcal{P}[\cB_1^{[-1]},\cB_2^{[+2]}] f_0^-\, , \, f_2^+ \big)  \stackrel{\eqref{psmirabilis}}{=} \mathit{V\,e}+
\big(\cB_2^{[+2]} f_0^-\, , \, P_0 \mathcal{P}[\cB_1^{[+1]},\cB_1^{[-1]}] f_2^+ \big)  \, ,\\ \notag
\mathit{V\, f} &:= \big(\cB_1^{[+1]}\mathcal{P}[\cB_1^{[-1]},\cB_1^{[+1]},\cB_1^{[+1]}] f_0^-\, , \, f_2^+ \big) = {\big(\cB_1^{[+1]} f_0^-\, , \, \mathcal{P}[\cB_1^{[-1]},\cB_1^{[+1]},\cB_1^{[-1]}] f_2^+ \big)} \, , \\ \notag
\mathit{V\, g} &:= \big(\cB_1^{[+1]}\mathcal{P}[\cB_1^{[+1]},\cB_1^{[-1]},\cB_1^{[+1]}] f_0^-\, , \, f_2^+ \big) =   {\big(\cB_1^{[+1]} f_0^-\, , \,\mathcal{P}[\cB_1^{[+1]},\cB_1^{[-1]},\cB_1^{[-1]}] f_2^+ \big)} \, , \\ \notag
\mathit{V\, h} &:= \big(\cB_1^{[+1]}\mathcal{P}[\cB_1^{[+1]},\cB_1^{[+1]},\cB_1^{[-1]}] f_0^-\, , \, f_2^+ \big) ={ \big(\cB_1^{[-1]} f_0^-\, , \, \mathcal{P}[\cB_1^{[-1]},\cB_1^{[-1]},\cB_1^{[-1]}] f_2^+ \big)} \, , \\ \notag
\mathit{VI\, a} &:= \big(\cB_1^{[-1]}\mathcal{P}[\cB_3^{[+3]}]f_0^-\, , \, f_2^+ \big) 
\stackrel{\eqref{psmirabilis}}{=}
 \big(\cB_3^{[+3]}f_0^-\, , \, \mathcal{P}[\cB_1^{[+1]}] f_2^+ \big) \, , \\ \notag
\mathit{VI\, b} &:= \big(\cB_1^{[-1]}\mathcal{P}[\cB_2^{[+2]},\cB_1^{[+1]}]f_0^-\, , \, f_2^+ \big) = \big(\cB_1^{[+1]}f_0^-\, , \, \mathcal{P}[\cB_2^{[-2]},\cB_1^{[+1]}] f_2^+ \big) \, ,  \\ \notag
\mathit{VI\, c} &:= \big(\cB_2^{[+2]}f_0^-\, , \, (\uno-P_0) \mathcal{P}[\cB_1^{[-1]},\cB_1^{[+1]}] f_2^+ \big) \, , \\ 
\notag
\widehat{\mathit{VI\, c}} &:=
\big(\cB_1^{[-1]}\mathcal{P}[\cB_1^{[+1]},\cB_2^{[+2]}]f_0^-\, , \, f_2^+ \big)  
 = 
\mathit{VI\, c} +   \big(\cB_2^{[+2]}f_0^-\, , \, P_0 \mathcal{P}[\cB_1^{[-1]},\cB_1^{[+1]}] f_2^+ \big)
 \, ,\\ \notag 
\mathit{VI\, d} &:= \big(\cB_1^{[-1]}\mathcal{P}[\cB_1^{[+1]},\cB_1^{[+1]},\cB_1^{[+1]}] f_0^-\, , \, f_2^+ \big) = \big(\cB_1^{[+1]} f_0^-\, , \, \mathcal{P}[\cB_1^{[-1]},\cB_1^{[-1]},\cB_1^{[+1]}] f_2^+ \big) \, .
\end{align} 
By Lemma \ref{actionofL},  Remark \ref{remarkinoid}, Lemma \ref{rem:res} and \eqref{entprop}
we represent these expressions in terms of the entanglement coefficients; we then compute them by means of Lemma \ref{lem:entexp}. 
We have, recalling the notation $\ent{0,n}{\kappa}{j}{j'}{\sigma}{\sigma'} \equiv \ent{n}{\kappa}{j}{j'}{\sigma}{\sigma'}$,
\begin{align}\label{beta3.coeff}
& {\mathit \Theta} =\ent{4}{+2}{2}{0}{+}{-} = 0\, ,\\
\notag
& \mathit{I}
 = \frac{\ent{1}{+1}{1}{0}{+}{-}\ent{3}{+1}{2}{1}{+}{+}}{\omega_1^+-\omega_*}-\frac{\ent{1}{+1}{1}{0}{-}{-}\ent{3}{+1}{2}{1}{+}{-}}{\omega_1^--\omega_*} =\im \frac{5  \sqrt{3}}{32} \e^4 \, , \\
 \notag
 & \mathit{II} = \frac{\ent{1}{-1}{-1}{0}{+}{-}\ent{3}{+3}{2}{-1}{+}{+}}{\omega_{-1}^+-\omega_*}-\frac{\ent{1}{-1}{-1}{0}{-}{-}\ent{3}{+3}{2}{-1}{+}{-}}{\omega_{-1}^--\omega_*} =- \frac{9 \im \sqrt{3}}{32}  \e^4  \, ,\\
 \notag
 & \mathit{III\, a} \stackrel{\eqref{beta2.aux1}}{=}   - \frac{\ent{2}{+2}{2}{0}{-}{-}\ent{2}{0}{2}{2}{+}{-}}{\omega_2^--\omega_*} 
 =\im \frac{3\sqrt{3}}{8} \e^4\, ,\\
 \notag
 & \mathit{III \, b} =  \frac{\ent {1}{+1}{1}{0}{-}{-}\ent{1}{+1}{2}{1}{-}{-}\ent{2}{0}{2}{2}{+}{-}}{(\omega_1^--\omega_*)(\omega_2^--\omega_*)} -\frac{\ent{1}{+1}{1}{0}{+}{-}\ent{1}{+1}{2}{1}{-}{+}\ent{2}{0}{2}{2}{+}{-}}{(\omega_1^+-\omega_*)(\omega_2^--\omega_*)} = \im \frac{15  \sqrt{3}}{32}  \e^4 \, , \\
 \notag
 &
  {\widehat{\mathit{III\, b}}-\mathit{III\, b}} = \frac{\ent{1}{+1}{1}{0}{-}{-}\ent{1}{+1}{2}{1}{+}{-}\ent{2}{0}{2}{2}{+}{+}}{(\omega_1^--\omega_*)^2} -\frac{\ent{1}{+1}{1}{0}{+}{-}\ent{1}{+1}{2}{1}{+}{+}\ent{2}{0}{2}{2}{+}{+}}{(\omega_1^+-\omega_*)^2}= -\im\frac{75  \sqrt{3}}{128}\e^4\, ,\\
  \notag
 & \mathit{IV \, a} \stackrel{\eqref{beta2.aux1}}{=} \frac{\ent{2}{0}{0}{0}{+}{-}\ent{2}{+2}{2}{0}{+}{+}}{\omega_0^+-\omega_*} =-\im \frac{\sqrt 3}{8} \e^4  \, , \\
 \notag
 & \mathit{IV\, b} = \frac{\ent{1}{-1}{-1}{0}{+}{-}\ent{1}{+1}{0}{-1}{+}{+}\ent{2}{+2}{2}{0}{+}{+}}{(\omega_{-1}^+-\omega_*)(\omega_0^+-\omega_*)}-\frac{\ent{1}{-1}{-1}{0}{-}{-}\ent{1}{+1}{0}{-1}{+}{-}\ent{2}{+2}{2}{0}{+}{+}}{(\omega_{-1}^--\omega_*)(\omega_0^+-\omega_*)} = 
 \im \frac{ 3 \sqrt{3}}{64} \e^4  \, ,\\
 \notag
&  
 {\widehat{\mathit{IV\, b}}-\mathit{IV\, b}} := \frac{\ent{1}{-1}{-1}{0}{+}{-}\ent{1}{+1}{0}{-1}{-}{+}\ent{2}{+2}{2}{0}{+}{-}}{(\omega_{-1}^+-\omega_*)^2}-\frac{\ent{1}{-1}{-1}{0}{-}{-}\ent{1}{+1}{0}{-1}{-}{-}\ent{2}{+2}{2}{0}{+}{-}}{(\omega_{-1}^--\omega_*)^2} = 0\, ,\\
  \notag 
&  {\mathit{IV\, c}} = \frac{\ent{1}{+1}{1}{0}{+}{-}\ent{1}{-1}{0}{1}{+}{+}\ent{2}{+2}{2}{0}{+}{+}}{(\omega_1^+-\omega_*)(\omega_0^+-\omega_*)} - \frac{\ent{1}{+1}{1}{0}{-}{-}\ent{1}{-1}{0}{1}{+}{-}\ent{2}{+2}{2}{0}{+}{+}}{(\omega_1^--\omega_*)(\omega_0^+-\omega_*)} = \im\frac{5  \sqrt{3}}{64} \e^4\, ,\\ 
\notag
&   {\widehat{\mathit{IV\, c}}-\mathit{IV\, c}} = \frac{\ent{1}{+1}{1}{0}{+}{-}\ent{1}{-1}{0}{1}{-}{+}\ent{2}{+2}{2}{0}{+}{-}}{(\omega_1^+-\omega_*)^2} - \frac{\ent{1}{+1}{1}{0}{-}{-}\ent{1}{-1}{0}{1}{-}{-}\ent{2}{+2}{2}{0}{+}{-}}{(\omega_1^--\omega_*)^2} = 0 \, ,\\ 
\notag
&\mathit{V\,a} = \frac{\ent{3}{+1}{1}{0}{+}{-}\ent{1}{+1}{2}{1}{+}{+}}{\omega_1^+-\omega_*}- \frac{\ent{3}{+1}{1}{0}{-}{-}\ent{1}{+1}{2}{1}{+}{-}}{\omega_1^--\omega_*} = -\im \frac{5 \sqrt{3}}{32} \e^4 \, ,\\ 
\notag
& \mathit{V\,b} = \frac{\ent{1}{+1}{1}{0}{+}{-}\ent{2}{0}{1}{1}{+}{+}\ent{1}{+1}{2}{1}{+}{+}}{(\omega_1^+-\omega_*)^2} - \frac{\ent{1}{+1}{1}{0}{-}{-}\ent{2}{0}{1}{1}{+}{-}\ent{1}{+1}{2}{1}{+}{+}}{(\omega_1^--\omega_*)(\omega_1^+-\omega_*)} \\ \notag
&+ \frac{\ent{1}{+1}{1}{0}{-}{-}\ent{2}{0}{1}{1}{-}{-}\ent{1}{+1}{2}{1}{+}{-}}{(\omega_1^--\omega_*)^2} - \frac{\ent{1}{+1}{1}{0}{+}{-}\ent{2}{0}{1}{1}{-}{+}\ent{1}{+1}{2}{1}{+}{-}}{(\omega_1^--\omega_*)(\omega_1^+-\omega_*)}  = 
-\im \frac{25 \sqrt{3}}{128} \e^4 \, ,\\ 
\notag
&  {\mathit{V\,c}} = \frac{\ent{2}{0}{0}{0}{+}{-}\ent{1}{+1}{1}{0}{+}{+}\ent{1}{+1}{2}{1}{+}{+} }{(\omega_0^+-\omega_*)(\omega_1^+-\omega_*)}-\frac{\ent{2}{0}{0}{0}{+}{-}\ent{1}{+1}{1}{0}{-}{+}\ent{1}{+1}{2}{1}{+}{-} }{(\omega_0^+-\omega_*)(\omega_1^--\omega_*)} = \im\frac{5  \sqrt{3}}{32} \e^4\, ,\\ 
\notag
&   {\widehat{\mathit{V\, c}}-\mathit{V\, c}} := \frac{\ent{2}{0}{0}{0}{-}{-}\ent{1}{+1}{1}{0}{+}{-}\ent{1}{+1}{2}{1}{+}{+} }{(\omega_1^+-\omega_*)^2}-\frac{\ent{2}{0}{0}{0}{-}{-}\ent{1}{+1}{1}{0}{-}{-}\ent{1}{+1}{2}{1}{+}{-} }{(\omega_1^--\omega_*)^2} = -\im\frac{35  \sqrt{3}}{128}\e^4 \, ,\\ \notag
&  {\mathit{V\,d}} = \frac{\ent{1}{-1}{-1}{0}{+}{-}\ent{2}{+2}{1}{-1}{+}{+}\ent{1}{+1}{2}{1}{+}{+}}{(\omega_{-1}^+-\omega_*)(\omega_1^+-\omega_*)}-\frac{\ent{1}{-1}{-1}{0}{-}{-}\ent{2}{+2}{1}{-1}{+}{-}\ent{1}{+1}{2}{1}{+}{+}}{(\omega_{-1}^--\omega_*)(\omega_1^+-\omega_*)}\\ \notag 
& +\frac{\ent{1}{-1}{-1}{0}{-}{-}\ent{2}{+2}{1}{-1}{-}{-}\ent{1}{+1}{2}{1}{+}{-}}{(\omega_{-1}^--\omega_*)(\omega_1^--\omega_*)}-\frac{\ent{1}{-1}{-1}{0}{+}{-}\ent{2}{+2}{1}{-1}{-}{+}\ent{1}{+1}{2}{1}{+}{-}}{(\omega_{-1}^+-\omega_*)(\omega_1^--\omega_*)} = \im \frac{15 \sqrt{3}}{64} \e^4 \, , \\ \notag
&  {\mathit{V\,e}} = \frac{\ent{2}{+2}{2}{0}{-}{-}\ent{1}{-1}{1}{2}{-}{-}\ent{1}{+1}{2}{1}{+}{-}}{(\omega_2^--\omega_*)(\omega_1^--\omega_*)}-\frac{\ent{2}{+2}{2}{0}{-}{-}\ent{1}{-1}{1}{2}{+}{-}\ent{1}{+1}{2}{1}{+}{+}}{(\omega_2^--\omega_*)(\omega_1^+-\omega_*)} = \im\frac{  15 \sqrt{3}}{64} \e^4\, , \\ \notag
&   {\widehat{\mathit{V\, e}}-\mathit{V\, e}} = \frac{\ent{2}{+2}{2}{0}{+}{-}\ent{1}{-1}{1}{2}{-}{+}\ent{1}{+1}{2}{1}{+}{-}}{(\omega_1^--\omega_*)^2}-\frac{\ent{2}{+2}{2}{0}{+}{-}\ent{1}{-1}{1}{2}{+}{+}\ent{1}{+1}{2}{1}{+}{+}}{(\omega_1^+-\omega_*)^2}= 0 \, ,\\ \notag
 & {\mathit{V\,f}} =-2\frac{\ent{1}{+1}{1}{0}{+}{-}\ent{1}{+1}{2}{1}{+}{+}\ent{1}{-1}{1}{2}{+}{+}\ent{1}{+1}{2}{1}{+}{+}}{(\omega_1^+-\omega_*)^3}-2\frac{\ent{1}{+1}{1}{0}{-}{-}\ent{1}{+1}{2}{1}{+}{-}\ent{1}{-1}{1}{2}{-}{+}\ent{1}{+1}{2}{1}{+}{-}}{(\omega_1^--\omega_*)^3}\\  \notag
&+ \frac{(\omega_1^++\omega_1^--2\omega_*)\ent{1}{+1}{1}{0}{-}{-}\ent{1}{+1}{2}{1}{+}{-}\ent{1}{-1}{1}{2}{+}{+}\ent{1}{+1}{2}{1}{+}{+} }{(\omega_1^+-\omega_*)^2(\omega_1^--\omega_*)^2} \\  \notag 
&+ \frac{(\omega_1^++\omega_1^--2\omega_*)\ent{1}{+1}{1}{0}{+}{-}\ent{1}{+1}{2}{1}{+}{+}\ent{1}{-1}{1}{2}{-}{+}\ent{1}{+1}{2}{1}{+}{-} }{(\omega_1^+-\omega_*)^2(\omega_1^--\omega_*)^2} \\ \notag
&+ \frac{\ent{1}{+1}{1}{0}{+}{-}\ent{1}{+1}{2}{1}{-}{+}\ent{1}{-1}{1}{2}{-}{-}\ent{1}{+1}{2}{1}{+}{-}}{(\omega_1^+-\omega_*)(\omega_2^--\omega_*)(\omega_1^--\omega_*)}  - \frac{\ent{1}{+1}{1}{0}{+}{-}\ent{1}{+1}{2}{1}{-}{+}\ent{1}{-1}{1}{2}{+}{-}\ent{1}{+1}{2}{1}{+}{+}}{(\omega_1^+-\omega_*)^2(\omega_2^--\omega_*)} \\ \notag 
&+ \frac{\ent{1}{+1}{1}{0}{-}{-}\ent{1}{+1}{2}{1}{-}{-}\ent{1}{-1}{1}{2}{+}{-}\ent{1}{+1}{2}{1}{+}{+}}{(\omega_1^--\omega_*)(\omega_2^--\omega_*)(\omega_1^+-\omega_*)} -  \frac{\ent{1}{+1}{1}{0}{-}{-}\ent{1}{+1}{2}{1}{-}{-}\ent{1}{-1}{1}{2}{-}{-}\ent{1}{+1}{2}{1}{+}{-}}{(\omega_1^--\omega_*)^2(\omega_2^--\omega_*)} = - \im \frac{75  \sqrt{3}}{256}  \e^4 \, ,
\\ \notag
&  {\mathit{V\, g}} = 2\frac{\ent{1}{+1}{1}{0}{+}{-}\ent{1}{-1}{0}{1}{-}{+}\ent{1}{+1}{1}{0}{+}{-}\ent{1}{+1}{2}{1}{+}{+}}{(\omega_1^+-\omega_*)^3}+2\frac{\ent{1}{+1}{1}{0}{-}{-}\ent{1}{-1}{0}{1}{-}{-}\ent{1}{+1}{1}{0}{-}{-}\ent{1}{+1}{2}{1}{+}{-}}{(\omega_1^--\omega_*)^3} \\ \notag
&-\frac{(\omega_1^++\omega_1^--2\omega_*)\ent{1}{+1}{1}{0}{+}{-}\ent{1}{-1}{0}{1}{-}{+}\ent{1}{+1}{1}{0}{-}{-}\ent{1}{+1}{2}{1}{+}{-}}{(\omega_1^+-\omega_*)^2(\omega_1^--\omega_*)^2}\\ \notag
&-\frac{(\omega_1^++\omega_1^--2\omega_*)\ent{1}{+1}{1}{0}{-}{-}\ent{1}{-1}{0}{1}{-}{-}\ent{1}{+1}{1}{0}{+}{-}\ent{1}{+1}{2}{1}{+}{+}}{(\omega_1^+-\omega_*)^2(\omega_1^--\omega_*)^2} \\ \notag
&+\frac{\ent{1}{+1}{1}{0}{+}{-}\ent{1}{-1}{0}{1}{+}{+}\ent{1}{+1}{1}{0}{+}{+}\ent{1}{+1}{2}{1}{+}{+}}{(\omega_1^+-\omega_*)^2(\omega_0^+-\omega_*)} - \frac{\ent{1}{+1}{1}{0}{-}{-}\ent{1}{-1}{0}{1}{+}{-}\ent{1}{+1}{1}{0}{+}{+}\ent{1}{+1}{2}{1}{+}{+}}{(\omega_1^+-\omega_*)(\omega_0^+-\omega_*)(\omega_1^--\omega_*)}  \\ \notag
&+ \frac{\ent{1}{+1}{1}{0}{-}{-}\ent{1}{-1}{0}{1}{+}{-}\ent{1}{+1}{1}{0}{-}{+}\ent{1}{+1}{2}{1}{+}{-}}{(\omega_1^--\omega_*)^2(\omega_0^+-\omega_*)} - \frac{\ent{1}{+1}{1}{0}{+}{-}\ent{1}{-1}{0}{1}{+}{+}\ent{1}{+1}{1}{0}{-}{+}\ent{1}{+1}{2}{1}{+}{-}}{(\omega_1^+-\omega_*)(\omega_0^+-\omega_*)(\omega_1^--\omega_*)}=\im \frac{25  \sqrt{3}}{256} \e^4   \, ,  \\ \notag
&  {\mathit{V\, h}}= \frac{(\omega_{-1}^++\omega_1^+-2\omega_*)\ent{1}{-1}{-1}{0}{+}{-}\ent{1}{+1}{0}{-1}{-}{+}\ent{1}{+1}{1}{0}{+}{-}\ent{1}{+1}{2}{1}{+}{+}}{(\omega_{-1}^+-\omega_*)^2(\omega_{1}^+-\omega_*)^2}\\ \notag
&+ \frac{(\omega_{-1}^-+\omega_1^--2\omega_*) \ent{1}{-1}{-1}{0}{-}{-}\ent{1}{+1}{0}{-1}{-}{-}\ent{1}{+1}{1}{0}{-}{-}\ent{1}{+1}{2}{1}{+}{-}}{(\omega_{-1}^--\omega_*)^2(\omega_{1}^--\omega_*)^2} \\ \notag
&- \frac{(\omega_{-1}^-+\omega_1^+-2\omega_*) \ent{1}{-1}{-1}{0}{-}{-}\ent{1}{+1}{0}{-1}{-}{-}\ent{1}{+1}{1}{0}{+}{-}\ent{1}{+1}{2}{1}{+}{+}}{(\omega_{-1}^--\omega_*)^2(\omega_1^+-\omega_*)^2}\\ \notag
&- \frac{(\omega_{-1}^++\omega_1^--2\omega_*) \ent{1}{-1}{-1}{0}{+}{-}\ent{1}{+1}{0}{-1}{-}{+}\ent{1}{+1}{1}{0}{-}{-}\ent{1}{+1}{2}{1}{+}{-}}{(\omega_{-1}^+-\omega_*)^2(\omega_1^--\omega_*)^2}  \\ \notag
&+\frac{\ent{1}{-1}{-1}{0}{+}{-}\ent{1}{+1}{0}{-1}{+}{+}\ent{1}{+1}{1}{0}{+}{+}\ent{1}{+1}{2}{1}{+}{+}}{(\omega_{-1}^+-\omega_*)(\omega_0^+-\omega_*)(\omega_{1}^+-\omega_*)} - \frac{\ent{1}{-1}{-1}{0}{-}{-}\ent{1}{+1}{0}{-1}{+}{-}\ent{1}{+1}{1}{0}{+}{+}\ent{1}{+1}{2}{1}{+}{+}}{(\omega_{-1}^--\omega_*)(\omega_0^+-\omega_*)(\omega_1^+-\omega_*)} \\ \notag
&+ \frac{\ent{1}{-1}{-1}{0}{-}{-}\ent{1}{+1}{0}{-1}{+}{-}\ent{1}{+1}{1}{0}{-}{+}\ent{1}{+1}{2}{1}{+}{-}}{(\omega_{-1}^--\omega_*)(\omega_0^+-\omega_*)(\omega_1^--\omega_*)} - \frac{\ent{1}{-1}{-1}{0}{+}{-}\ent{1}{+1}{0}{-1}{+}{+}\ent{1}{+1}{1}{0}{-}{+}\ent{1}{+1}{2}{1}{+}{-}}{(\omega_{-1}^+-\omega_*)(\omega_0^+-\omega_*)(\omega_1^--\omega_*)} =0
 \, , \\ \notag
 & \mathit{VI\, a} :=   \frac{\ent{3}{+3}{3}{0}{+}{-}\ent{1}{-1}{2}{3}{+}{+}}{\omega_3^+-\omega_*}-\frac{\ent{3}{+3}{3}{0}{-}{-}\ent{1}{-1}{2}{3}{+}{-}}{\omega_3^--\omega_*} = 0\, ,\\ \notag
& \mathit{VI\, b} = \frac{\ent{1}{+1}{1}{0}{+}{-}\ent{2}{+2}{3}{1}{+}{+}\ent{1}{-1}{2}{3}{+}{+}}{(\omega_1^+-\omega_*)(\omega_3^+-\omega_*)} - \frac{\ent{1}{+1}{1}{0}{-}{-}\ent{2}{+2}{3}{1}{+}{-}\ent{1}{-1}{2}{3}{+}{+}}{(\omega_1^--\omega_*)(\omega_3^+-\omega_*)} \\ \notag
&+ \frac{\ent{1}{+1}{1}{0}{-}{-}\ent{2}{+2}{3}{1}{-}{-}\ent{1}{-1}{2}{3}{+}{-}}{(\omega_1^--\omega_*)(\omega_3^--\omega_*)} - \frac{\ent{1}{+1}{1}{0}{+}{-}\ent{2}{+2}{3}{1}{-}{+}\ent{1}{-1}{2}{3}{+}{-}}{(\omega_1^+-\omega_*)(\omega_3^--\omega_*)} = 0\, 
 \, ,\\ \notag
& \mathit{VI\, c}= \frac{\ent{2}{+2}{2}{0}{-}{-}\ent{1}{+1}{3}{2}{-}{-}\ent{1}{-1}{2}{3}{+}{-}}{(\omega_2^--\omega_*)(\omega_3^--\omega_*)} - \frac{\ent{2}{+2}{2}{0}{-}{-}\ent{1}{+1}{3}{2}{+}{-}\ent{1}{-1}{2}{3}{+}{+}}{(\omega_2^--\omega_*)(\omega_3^+-\omega_*)} = -\im \frac{39 \sqrt{3}}{64}\e^4 \, ,\\ \notag
&   {\widehat{\mathit{VI\, c}}-\mathit{VI\, c}} := \frac{\ent{2}{+2}{2}{0}{+}{-}\ent{1}{+1}{3}{2}{-}{+}\ent{1}{-1}{2}{3}{+}{-}}{(\omega_3^--\omega_*)^2} - \frac{\ent{2}{+2}{2}{0}{+}{-}\ent{1}{+1}{3}{2}{+}{+}\ent{1}{-1}{2}{3}{+}{+}}{(\omega_3^+-\omega_*)^2} = 0 \, ,\\ \notag
&  {\mathit{VI\, d}} =\frac{\ent{1}{+1}{1}{0}{-}{-}\ent{1}{+1}{2}{1}{-}{-}\ent{1}{+1}{3}{2}{+}{-}\ent{1}{-1}{2}{3}{+}{+}}{(\omega_1^--\omega_*)(\omega_2^--\omega_*)(\omega_3^+-\omega_*)} -\frac{\ent{1}{+1}{1}{0}{+}{-}\ent{1}{+1}{2}{1}{-}{+}\ent{1}{+1}{3}{2}{+}{-}\ent{1}{-1}{2}{3}{+}{+}}{(\omega_1^+-\omega_*)(\omega_2^--\omega_*)(\omega_3^+-\omega_*)} \\ \notag 
&\quad +\frac{\ent{1}{+1}{1}{0}{+}{-}\ent{1}{+1}{2}{1}{-}{+}\ent{1}{+1}{3}{2}{-}{-}\ent{1}{-1}{2}{3}{+}{-}}{(\omega_1^+-\omega_*)(\omega_2^--\omega_*)(\omega_3^--\omega_*)} -\frac{\ent{1}{+1}{1}{0}{-}{-}\ent{1}{+1}{2}{1}{-}{-}\ent{1}{+1}{3}{2}{-}{-}\ent{1}{-1}{2}{3}{+}{-}}{(\omega_1^--\omega_*)(\omega_2^--\omega_*)(\omega_3^--\omega_*)} \\ \notag
&\quad +\frac{(\omega_1^-+\omega_3^+-2\omega_*)\ent{1}{+1}{1}{0}{-}{-}\ent{1}{+1}{2}{1}{+}{-}\ent{1}{+1}{3}{2}{+}{+}\ent{1}{-1}{2}{3}{+}{+}}{(\omega_1^--\omega_*)^2(\omega_3^+-\omega_*)^2}\\ \notag
&\quad-\frac{(\omega_1^++\omega_3^+-2\omega_*)\ent{1}{+1}{1}{0}{+}{-}\ent{1}{+1}{2}{1}{+}{+}\ent{1}{+1}{3}{2}{+}{+}\ent{1}{-1}{2}{3}{+}{+}}{(\omega_1^+-\omega_*)^2(\omega_3^+-\omega_*)^2} \\ \notag 
&\quad +\frac{(\omega_1^++\omega_3^--2\omega_*)\ent{1}{+1}{1}{0}{+}{-}\ent{1}{+1}{2}{1}{+}{+}\ent{1}{+1}{3}{2}{-}{+}\ent{1}{-1}{2}{3}{+}{-}}{(\omega_1^+-\omega_*)^2(\omega_3^--\omega_*)^2}\\ \notag
&\quad-\frac{(\omega_1^-+\omega_3^--2\omega_*)\ent{1}{+1}{1}{0}{-}{-}\ent{1}{+1}{2}{1}{+}{-}\ent{1}{+1}{3}{2}{-}{+}\ent{1}{-1}{2}{3}{+}{-}}{(\omega_1^--\omega_*)^2(\omega_3^--\omega_*)^2} = \im \frac{195  \sqrt{3}}{256} \e^4
\, .
\end{align}
Finally we compute the last term 
$\mathbf{L}$ in \eqref{def:ellone}. 
By \eqref{Psani}, \eqref{nodelta} and \eqref{beta2.aux1}
\begin{equation}\label{Lcalc}
\begin{aligned}
&P_0 P_2^{[+2]} f_0^- = P_0 \mathcal{P}[\cB_1^{[+1]},\cB_1^{[+1]}] f_0^- = \zeta_1 f_2^+\e^2 \, ,\\
&P_0 P_2^{[-2]} f_2^+ =   P_0 \mathcal{P}[\cB_1^{[-1]},\cB_1^{[-1]}] f_2^+  = \zeta_2 f_0^-\e^2\, , \\
&P_0  P_{2}^{[0]} f_0^- =
\zeta_3 f_0^-\e^2 \, , \quad
P_0  P_{2}^{[0]} f_2^+ =
\zeta_4 f_2^+ \e^2 \, , 
\end{aligned}
\end{equation}
with $\zeta_3,\zeta_4\in \bC$ and, again by \eqref{insideloop} and \eqref{residuevalue2}, 
\begin{equation}\label{P0P2coeffs}
\begin{aligned}
&\zeta_1 := \frac{\ent{1}{+1}{1}{0}{-}{-}\ent{1}{+1}{2}{1}{+}{-}}{(\omega_1^--\omega_*)^2}-\frac{\ent{1}{+1}{1}{0}{+}{-}\ent{1}{+1}{2}{1}{+}{+}}{(\omega_1^+-\omega_*)^2} = \im \frac{5 \sqrt{3}}{16} \\
&  \zeta_2 :=  \frac{\ent{1}{-1}{1}{2}{+}{+}\ent{1}{-1}{0}{1}{-}{+}}{(\omega_1^+-\omega_*)^2} - \frac{\ent{1}{-1}{1}{2}{-}{+}\ent{1}{-1}{0}{1}{-}{-}}{(\omega_1^--\omega_*)^2}\, = \im \frac{5 \sqrt{3}}{16}  \ .
\end{aligned}
\end{equation}
Note that $\zeta_1 = -\bar\zeta_2 $ by \eqref{entprop}. By \eqref{Lcalc} 
the term $\mathbf{L}$ in \eqref{def:ellone} is given by
\begin{align}\notag
&-\frac{2 \mathbf{L}}{\e^2}=  
 \zeta_3
  \underbrace{\big({\cal B}_1^{[+1]} P_1^{[+1]} f_0^-\, , \, f_{2}^+\big)}_{=\mathrm{Bb}=0}
  + \zeta_1
  \underbrace{ \big({\cal B}_1^{[-1]} P_1^{[+1]} f_2^+ , \, f_{2}^+\big)}_{=\mathrm{Ac}}
  + \zeta_1 
   \underbrace{\big({\cal B}_1^{[+1]} P_1^{[-1]}  f_2^+\, , \, f_{2}^+\big)}_{=\mathrm{Ab}}\\ \label{lastterm}
& + \bar{\zeta_4}
\underbrace{ \big({\cal B}_1^{[+1]}  f_0^-\, , \, P_1^{[-1]}  f_{2}^+\big)}_{=\mathrm{Bb} = 0}  +\bar{\zeta_2}
\underbrace{ \big({\cal B}_1^{[-1]}  f_0^-\, , \, P_1^{[-1]}  f_{0}^-\big)}_{=\mathrm{Cb}} 
+\bar{\zeta_2} 
\underbrace{\big({\cal B}_1^{[+1]}  f_0^-\, , \, P_1^{[+1]} f_{0}^-\big)}_{=\mathrm{Cc}}  \, ,
\end{align}
with $\mathrm{Ab}$, $\mathrm{Ac}$, $\mathrm{Bb}$, $\mathrm{Cb}$ and $\mathrm{Cc}$ defined and computed in \eqref{A}, \eqref{B}, \eqref{C} (recall also \eqref{notazionefinale}). We then have 
\begin{equation}\label{Lfinaleecco}
    \mathbf{L} = - \im\frac{305}{512}  \sqrt{3} \e^4\, .
\end{equation}
By   \eqref{beta3expfin}, \eqref{beta3.coeff} and \eqref{Lfinaleecco} we conclude that $\beta_3 = -\dfrac{39 \sqrt{3}}{512}$ as stated in \eqref{epsilonspento2}. This completes the proof of Proposition \ref{expbT}. \medskip

\footnotesize{
\noindent {\bf Acknowledgments.}
We thank W. Strauss, B. Deconinck and V. Hur for several insightful conversations.
This work is supported by~ PRIN 2020 (2020XB3EFL001) “Hamiltonian and dispersive PDEs”, PRIN 2022 (2022HSSYPN)   "TESEO - Turbulent Effects vs Stability in Equations from Oceanography".
P.\ Ventura is supported by the ERC STARTING GRANT 2021 ``Hamiltonian Dynamics, Normal Forms and Water Waves" (HamDyWWa), Project Number: 101039762.
 Views and opinions expressed are however those of the authors only and do not necessarily reflect those of the European Union or the European Research Council. Neither the European Union nor the granting authority can be held responsible for them.
}

\vspace{-1.5em}

\vspace{-1em}

\end{document}